\crefname{hypothesis}{Hypothesis}{Hypotheses}
\title{uniformly \lowercase{$hp$}-stable elements for the elasticity complex\thanks{Submitted to the editors DATE.
\funding{This work was supported by the UK Engineering and Physical Sciences Research Council through research grant EP/W522582/1, the National Science Foundation under Award No.~DMS-2201487, and a Royal Society University Research Fellowship (URF$\backslash${\rm R}1$\backslash$221398).}}}
\author{Francis R.~A.~Aznaran\thanks{Department of Applied and Computational Mathematics and Statistics, University of Notre Dame, Notre Dame, IN 46556 (\email{faznaran@nd.edu}).}
\and Kaibo Hu\thanks{School of Mathematics, University of Edinburgh, James Clerk Maxwell Building, Peter Guthrie Tait Road, Edinburgh, EH9 3FD, UK (\email{kaibo.hu@ed.ac.uk}).}
\and Charles Parker\thanks{Mathematical Institute, University of Oxford, Oxford, OX2 6GG, UK (\email{parker@maths.ox.ac.uk}).}}
\DeclareMathOperator{\grad}{\mathbf{grad}}
\DeclareMathOperator{\dive}{div}
\DeclareMathOperator{\spann}{span}
\DeclareMathOperator{\trace}{tr}
\DeclareMathOperator{\Div}{div}
\DeclareMathOperator{\curl}{curl}
\DeclareMathOperator{\airy}{airy}
\DeclareMathOperator{\sym}{sym}
\newcommand{\dx}{\mathrm{d}x}
\newcommand{\dy}{\mathrm{d}y}
\newcommand{\dt}{\mathrm{d}t}
\newcommand{\ds}{\mathrm{d}s}
\newcommand{\D}{\mathrm{d}}
\newcommand{\be}{\mathbf{e}}
\newcommand{\bn}{\mathbf{n}}
\newcommand{\bt}{\mathbf{t}}
\newcommand{\bx}{\mathbf{x}}
\newcommand{\vertiii}[1]{{\left\vert\kern-0.25ex\left\vert\kern-0.25ex\left\vert #1\right\vert\kern-0.25ex\right\vert\kern-0.25ex\right\vert}}
    \newcommand{\nicemat}{\begin{bNiceArray}{ccc|cc}[margin]}
    \newcommand{\nicediscretemat}{\begin{bNiceArray}{ccc|cc}[left-margin = .6em, right-margin = 1.15em]}
    \newcommand{\nicevec}{\begin{bNiceArray}{c}[left-margin = .4em, right-margin = .4em]}
    \newcommand{\nicemat}{\begin{bNiceArray}{CCC|CC}[margin]}
    \newcommand{\nicediscretemat}{\begin{bNiceArray}{CCC|CC}[left-margin = .6em, right-margin = 1.15em]}
    \newcommand{\nicevec}{\begin{bNiceArray}{C}[left-margin = .6em, right-margin = .6em]}
\def\XXint#1#2#3{{\setbox0 = \hbox{$#1{#2#3}{\int}$ }
\vcenter{\hbox{$#2#3$ }}\kern-.6\wd0}}
\DeclareFontFamily{U}{matha}{\hyphenchar\font45}
\DeclareFontShape{U}{matha}{m}{n}{
      <5> <6> <7> <8> <9> <10> gen * matha
      <10.95> matha10 <12> <14.4> <17.28> <20.74> <24.88> matha12
      }{}
\DeclareSymbolFont{matha}{U}{matha}{m}{n}
\DeclareFontFamily{U}{mathx}{\hyphenchar\font45}
\DeclareFontShape{U}{mathx}{m}{n}{
      <5> <6> <7> <8> <9> <10>
      <10.95> <12> <14.4> <17.28> <20.74> <24.88>
      mathx10
      }{}
\DeclareSymbolFont{mathx}{U}{mathx}{m}{n}
\DeclareMathSymbol{\obot}{2}{matha}{"6B}
\DeclareMathSymbol{\bigobot}{1}{mathx}{"CB}
\definecolor{oxfordblue}{RGB}{4, 30, 66}
\newlength{\leftstackrelawd}
\newlength{\leftstackrelbwd}
\def\leftstackrel#1#2{\settowidth{\leftstackrelawd}
    {${{}^{#1}}$}\settowidth{\leftstackrelbwd}{$#2$}
    \addtolength{\leftstackrelawd}{-\leftstackrelbwd}
    \leavevmode\ifthenelse{\lengthtest{\leftstackrelawd>0pt}}
    {\kern-.5\leftstackrelawd}{}\mathrel{\mathop{#2}\limits^{#1}}}
\newcommand{\hdiv}[2]{H(\dive, #1; #2)}
\newcommand{\hdivz}[2]{H_{0}(\dive, #1; #2)}
\newcommand{\hdivgamma}[2]{H_{\Gamma}(\dive, #1; #2)}
\newcommand{\ltwomodrm}{\check{L}^2}
\newcommand{\rigidbodies}{\mathrm{RM}}
\newcommand{\htwospace}[0]{Q}
\newcommand{\hdivspace}[0]{\Sigma}
\newcommand{\ltwospace}[0]{V}
\renewcommand\ker{\mathcal{N}}
\newcommand\sskw{\operatorname{sskw}}
\newcommand\mskw{\operatorname{mskw}}
\renewcommand\div{\operatorname{div}}
\newcommand\ran{\mathcal{R}}
\begin{document}

\maketitle

\begin{abstract}
For the discretization of symmetric, divergence-conforming stress tensors in continuum mechanics, 
we prove inf-sup stability bounds which are uniform in polynomial degree and mesh size for the Hu--Zhang finite element in two dimensions.
This is achieved via an explicit construction of a bounded right inverse of the divergence operator, with the crucial component being the construction of bounded Poincar\'e operators for the
stress 
elasticity complex which are polynomial-preserving, in the Bernstein--Gelfand--Gelfand framework of the finite element exterior calculus.
We also construct $hp$-bounded projection operators satisfying a commuting diagram property and $hp$-stable Hodge decompositions. Numerical examples are provided.
\end{abstract}
\begin{keywords}
high-order finite elements, 
linear
elasticity, BGG construction, finite element exterior calculus,
elasticity complex, stress tensor
\end{keywords}

\begin{MSCcodes}
65N30, 74B05, 74G15, 74S05
\end{MSCcodes}

\section{Introduction}

In many instances of the numerical discretization of continuum mechanics problems, it is useful to directly approximate not only the primary variable
describing deformation,
such as the displacement or velocity vector field, but also 
some form of stress tensor,
a (typically symmetric) matrix-valued dual variable 
whose constitutive relation to the strain (or strain rate) tensor forms a principal governing equation;
this is often known as the Hellinger--Reissner variational principle.
The
prototypical example we consider will be
the deformation of a linear elastic isotropic 
body $\Omega \subset \mathbb{R}^d$,
subject to a body force $f$,
with boundary $\Gamma$ partitioned into disjoint subsets $\Gamma_N$ and $\Gamma_D$,
which
is described by the Hellinger--Reissner principle in terms of the displacement $u$ and Cauchy stress tensor $\sigma$:
\begin{equation}\label{eq:H-R_strongform}
    \begin{aligned}
        \mathcal{A}\sigma &= \varepsilon(u) \quad &&\text{ in }\Omega, 
        \\
        \Div\sigma &= f \quad &&\text{ in }\Omega, 
        \\
        u &= u_0 \quad &&\text{ on }\Gamma_N, 
        \\
        \sigma \bn &= g \quad &&\text{ on }\Gamma_D, 
    \end{aligned}
\end{equation}
where 
the divergence of a tensor field is defined row-wise,
$\mathcal{A}$ denotes the compliance tensor
(in terms of Lam\'e parameters $\mu, \lambda > 0$)
\begin{align}\label{eq:compliance}
    \mathcal{A}_{ijkl} \sigma_{kl} := \frac{1}{2\mu} \left(\sigma_{ij} - \frac{\lambda}{2\lambda + 2\mu} (\trace \sigma) \delta_{ij} \right),
\end{align}
$\varepsilon = \sym\nabla$ the symmetric gradient (linearized strain) of a vector field, 
$u_0, g$ are displacement and traction data respectively,
and $\bn$ 
is 
the outward-pointing unit normal to $\Gamma$.
Let now $\mathbb{V} := \mathbb{R}^d$ and $\mathbb{S} := \mathbb{R}^{d\times d}_{\rm sym}$.
For illustration, consider the simple case in which
$u_0 = 0$, $g = 0$,
with pure displacement boundary conditions ($|\Gamma_D| = 0$).
Then, the weak form of~\cref{eq:H-R_strongform} 
is a saddle point problem which
seeks $(\sigma, u) \in \hdiv{\Omega}{\mathbb{S}} \times L^2(\Omega; \mathbb{V})$
such that
\begin{equation}
    \begin{aligned}\label{eq:hellinger-reissner-continuous}
        (\mathcal{A}\sigma, \tau) + (\dive \tau, u) &= 0 \qquad & &\forall~\tau \in \hdiv{\Omega}{\mathbb{S}}, \\
        (\dive \sigma, v) &= (f, v) \qquad & &\forall~v \in L^2(\Omega; \mathbb{V}),
    \end{aligned}
\end{equation}
where 
$\hdiv{\Omega}{\mathbb{S}}$ is the space of square-integrable symmetric tensor fields whose divergence is also square-integrable,
with graph norm $\|\cdot\|^2_{\Div, \Omega} := \|\cdot\|_{0, \Omega}^2 + \|\Div(\cdot)\|_{0, \Omega}^2$.
We employ standard notation for the Sobolev space $H^k(\Omega; \mathbb{X})$ (or $L^2(\Omega; \mathbb{X})$ when $k = 0$) with
codomain $\mathbb{X}$ (which is omitted when $\mathbb{X} = \mathbb{R}$), and associated norm $\|\cdot\|_{k, \Omega}$ and seminorm $|\cdot|_{k, \Omega}$ (or simply $\|\cdot\|_k, |\cdot|_k$).

The mixed dual formulation~\cref{eq:hellinger-reissner-continuous}
constitutes an alternative to the more common primal formulation of linear elasticity,
in which the stress is eliminated 
and $u\in H^1(\Omega; \mathbb{V})$ is solved for as the only unknown;
this mixed formulation is often motivated by, for example, avoiding the well-known 
phenomenon 
of volumetric locking in the incompressible regime $\lambda \gg \mu$.
Moreover,
at least formally, 
the compliance tensor~\cref{eq:compliance}
converges
in the incompressible limit $\lambda\to\infty$
to a scalar multiple of the \textit{deviatoric} operator which sends a matrix field to its traceless component. 
Thus, the 
model problem~\cref{eq:hellinger-reissner-continuous}
also 
serves as a starting point for 
implicitly- or nonlinearly-constituted solids.
In practical applications, one is often interested in computing the stress with at least as much accuracy as the displacement,
and hence in formulations in which the stress is computed directly rather than, for example, via numerical differentiation after the fact;
its direct approximation combined with $h$- and/or $p$-refinement
also helps
to resolve stress singularities~\cite{Carstensen2016a}.
Scalar quantities derived from a stress tensor, such as the von Mises and Tresca stresses, 
can be of great practical interest as part of yield criteria in plasticity theory~\cite{VonMises1913}\cite[Ch.~8.4]{Howell2009}.
More significantly, 
symmetric stress tensors 
are
often 
crucial coupling variables between different subproblems in multiphysics, such as
the Kirchhoff stress for electrical propagation in 
hyperelastic models of 
biomechanics~\cite{Cherubini2020},
the Cauchy stress for thermomechanical coupling in viscoelasticity~\cite{Malek2018},
or the viscous stress in multicomponent convection-diffusion~\cite{Aznaran2024}.

This work considers the role of the \textit{elasticity complex} for the discretization of such symmetric stress tensors.
For concreteness, consider the following 
conforming 
discretization of~\cref{eq:hellinger-reissner-continuous}:
find $(\sigma_{p}, u_{p - 1}) \in \hdivspace^{p} \times \ltwospace^{p - 1}$ such that
\begin{equation}
    \begin{aligned}\label{eq:hellinger-reissner-fem}
        (\mathcal{A}\sigma_{p}, \tau) + (\dive \tau, u_{p - 1}) &= 0 \qquad & &\forall~\tau \in \hdivspace^{p}, \\
        (\dive \sigma_{p}, v) &= (f, v) \qquad & &\forall~v \in \ltwospace^{p - 1},
    \end{aligned}
\end{equation}
where 
$\hdivspace^{p} \subset \hdiv{\Omega}{\mathbb{S}}$ and $\ltwospace^{p-1} \subset L^2(\Omega; \mathbb{V})$
are subspaces of the piecewise polynomials of total degree $p$ and $p - 1$ respectively, on a triangulation of the domain $\Omega$ with maximal element diameter $h$,
and $(\cdot, \cdot)_\Omega$ (or just $(\cdot, \cdot)$) denotes the $L^2(\Omega; \mathbb{X})$ inner product.
The symmetry of the stress tensor 
(which is 
typically 
equivalent to the conservation of angular momentum)
poses challenges to the construction of stable finite elements for 
problems involving an $H(\Div; \mathbb{S})$ stress.
Specifically, the  symmetry, 
unisolvency, $\Div$-conformity, and inf-sup compatibility of the pair 
$\hdivspace^{p}$ and $\ltwospace^{p - 1}$ 
in~\cref{eq:hellinger-reissner-fem}
are difficult to fulfill all at the same time, particularly on arbitrary meshes. 

Complexes 
have proved to be 
a popular tool for designing stable finite element methods. 
In the case of 
(for example)
pure displacement boundary conditions ($|\Gamma_D| = 0$),
the stress elasticity complex 
in two dimensions
reads 
\begin{equation}\label{eq:exact-sequence-continuous-disp}
    \begin{tikzcd}
        0 \arrow{r} & \mathcal{P}_1(\Omega) \arrow{r}{\iota} & H^2(\Omega) \arrow{r}{\airy} & \hdiv{\Omega}{\mathbb{S}} \arrow{r}{\dive} & L^2(\Omega; \mathbb{V}) \arrow{r} & 0,
    \end{tikzcd}
\end{equation}
where 
$\mathcal{P}_k(\Omega; \mathbb{X})$ denotes the space of $\mathbb{X}$-valued polynomials of total degree at most $k\geq 0$ on $\Omega$,
$\iota$ denotes the inclusion map, 
and the $\airy$ operator assigns to a scalar potential $q$ its matrix-valued Airy stress function, the cofactor of its Hessian,
\begin{equation*}
    \airy q := \begin{pmatrix} \partial^2_{yy} q & -\partial_{xy}^2 q \\ -\partial_{xy}^2 q & \partial_{xx}^2 q \end{pmatrix},
\end{equation*}
which is identically symmetric and divergence-free 
(and sometimes denoted $\curl\curl$).
Observe for
the sequence~\cref{eq:exact-sequence-continuous-disp} 
that each operator vanishes on the image of the previous, 
but on simply connected domains, the sequence
is exact~\cite{Arnold2002}, which means that the image of each operator is precisely the kernel of the next. For example, if $\tau \in \hdiv{\Omega}{\mathbb{S}}$ is divergence-free, then in fact $\tau = \airy q$ for some $q \in H^2(\Omega)$.

To design stable finite elements, 
one may seek 
a subcomplex  of~\cref{eq:exact-sequence-continuous-disp},
\begin{equation}\label{eq:generic-discrete-sequence}
    \begin{tikzcd}
        0 \arrow{r} & \mathcal{P}_1(\Omega) \arrow{r}{\iota} & \htwospace^{p + 2} \arrow{r}{\airy} & \hdivspace^p \arrow{r}{\dive} & \ltwospace^{p - 1} \arrow{r} & 0,
    \end{tikzcd}
\end{equation}
where $Q^{p + 2} \subset H^2(\Omega)$, and discretize~\cref{eq:hellinger-reissner-continuous} using the pair $\hdivspace^p \times \ltwospace^{p - 1}$. Choosing spaces from a complex typically leads to inf-sup stable pairs. This construction proceeds analogously in the case of mixed boundary conditions, as 
we will discuss
in~\cref{sec:fe-spaces}.
Our main focus will be on robustness with respect to $p$, since $p$- and $hp$-version finite element methods deliver superior convergence rates for both smooth problems and problems with singularities~\cite{Babuska1994} and are better suited for modern (super)computer architectures~\cite{Kolev2021}. As such, we suppress notational dependence 
on $h$.

Indeed, while 
many works on $H(\Div; \mathbb{S})$ elements are stated for arbitrary-order elements, 
to our knowledge the stability (or otherwise) of such methods with respect to polynomial degree $p$ has not been considered.
In this work, we study the $hp$ properties of a choice of discrete exact sequence~\cref{eq:generic-discrete-sequence}
in two dimensions.
We
construct bounded right inverses to the divergence operator from the discrete displacement space to the discrete stress space, which are bounded independently of polynomial degree $p$, for a specific choice of $\Sigma^p$ and $V^{p - 1}$. 
An immediate consequence of this 
statement,
to our knowledge the first result of its kind, 
is that the finite element pair is uniformly $hp$-stable for the elasticity problem~\cref{eq:hellinger-reissner-fem},
as well as for other problems requiring the same inf-sup condition, such as the linear Reissner--Mindlin plate~\cite{Sky2023}. 

The proof relies on the construction of Poincar\'{e} operators using the Bernstein--Gelfand--Gelfand (BGG) framework of finite element exterior calculus~\cite{Arnold2021} and on the existence of polynomial extension operators~\cite{Parker2023}. Using the bounded inverse of the divergence operator, we 
also 
construct projection operators,
connecting the original complex to the discrete complex,
that satisfy a commuting diagram property, 
and show that the symmetric tensor elements admit an $hp$-stable Hodge decomposition.

The stress space coincides with the triangular Hu--Zhang element~\cite{Hu2014},
which are 
one 
in a long line of attempts -- beginning with the Arnold--Winther elements~\cite{Arnold2002} -- to exactly enforce symmetry and $\Div$-conformity of 
stress tensors in continuum mechanics.
The Hu--Zhang element 
was shown using BGG to form a discrete complex of its own in~\cite{Christiansen2018}, and 
has also been applied to discretize 
the bending moment tensor for the Reissner--Mindlin plate~\cite{Sky2023}.
Although we have emphasized the broad utility of $H(\Div; \mathbb{S})$ stress elements,
the tradeoffs between 
the challenges in constructing them
have garnered much attention in the finite element literature
even just for the canonical Hellinger--Reissner problem~\cref{eq:hellinger-reissner-fem}.
Typically, one or more of 
the desired properties of the pair $\hdivspace^{p}\times\ltwospace^{p - 1}$ 
is sacrificed or modified.
For example, 
symmetry may be enforced weakly via penalization~\cite{Cai2004}
or a Lagrange multiplier~\cite{Arnold2007},
the TDNNS approach trades regularity requirements between the stress and displacement in the variational formulation~\cite{Schoeberl2007},
and
the Gopalakrishnan--Guzm\'an element in 3D~\cite{Gopalakrishnan2012} is such that weak enforcement of symmetry implies its strong enforcement, but it
requires an Alfeld-split mesh,
as do a recent work on a finite element elasticity complex on the 3D Alfeld-split~\cite{Christiansen2024} and a recent low-order contribution in 3D which generalizes 
the Johnson--Mercier element~\cite{Johnson1978}
to arbitrary dimensions~\cite{Gopalakrishnan2024}.

This work is structured as follows. In~\cref{sec:elasticity-complex}, we clarify our notation, and certain technical properties of the continuous (i.e.~infinite-dimensional) elasticity complex~\cref{eq:exact-sequence-continuous-disp}. 
Next in~\cref{sec:fe-spaces}, we define our discrete analog of this, the Hu--Zhang complex, proving local and global unisolvency of the stress space.
We summarize our three main results in~\cref{sec:main-results}, before immediately providing numerical evidence thereof in~\cref{sec:numerics}.
Proof of the principal result, the $p$-stable inversion of the divergence, is spread over~\cref{sec:poincare-element} (for a single element) and~\cref{sec:invert-div-mesh} (over an entire mesh); 
proofs the other main results, namely the existence of $hp$-bounded commuting projections and $hp$-stable Hodge decompositions, are provided in~\cref{sec:cochain}.

\section{Problem setting and the elasticity complex}\label{sec:elasticity-complex}

We assume that the domain $\Omega$ is polygonal with a finite number of polygonal holes, i.e.~there exist open simply connected polygonal domains $\{ \Omega_i \}_{i=0}^{M} \subset \mathbb{R}^2$ satisfying the following conditions: (1) $\Omega = \Omega_0 \setminus \cup_{i=1}^{M} \overline{\Omega}_i$; (2) $\overline{\Omega}_i \subset \Omega_0$ for $1 \leq i \leq M$; and (3) $\overline{\Omega}_j \cap \overline{\Omega}_k = \emptyset$ for $1 \leq j, k \leq M$ and $j \neq k$. Let $\{ A_j \}_{j=1}^N$ and $\{ \Gamma_j \}_{j=1}^N$ denote the vertices and edges of $\Gamma$, ordered counterclockwise on each connected component of $\Gamma$ as in~\cref{fig:domain-example}. We assume that the edge labels are partitioned into sets $\mathcal{D}$ and $\mathcal{N}$ such that $\mathcal{D} \cup \mathcal{N} = \{1, \ldots, N\}$, $\Gamma_D = \cup_{i \in \mathcal{D}} \Gamma_i$ and $\Gamma_N = \cup_{i \in \mathcal{N}} \Gamma_i$.

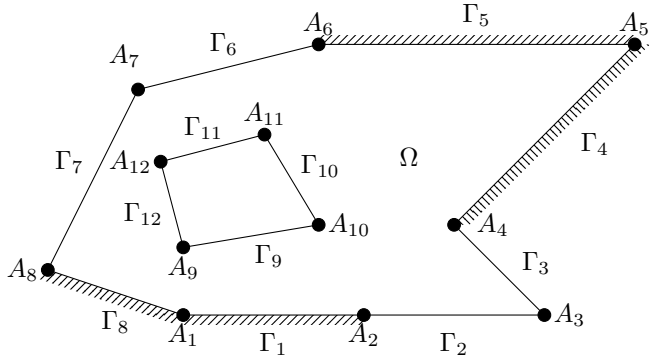
\begin{figure}[H]
    \centering
    \begin{tikzpicture}[scale=1.2]
        \coordinate (A1) at (-1, 0);
        \coordinate (A2) at (1, 0);
        \coordinate (A3) at (3, 0);
        \coordinate (A4) at (2, 1);
        \coordinate (A5) at (4, 3);
        \coordinate (A6) at (0.5, 3);
        \coordinate (A7) at (-1.5, 2.5);
        \coordinate (A8) at (-2.5, 0.5);
        \coordinate (B1) at (-1, 0.75);
        \coordinate (B2) at (0.5, 1);
        \coordinate (B3) at (-0.1, 2);
        \coordinate (B4) at (-1.25, 1.7);
        \coordinate (Olabel) at (1.5, 1.75);
        
        \tikzstyle{ground}=[fill, pattern = north east lines, draw = none, minimum width = 0.75cm, minimum height = 0.3cm, inner sep = 0pt, outer sep = 0pt]
        
        
        \draw[ground] (A1) rectangle ($(A2) + (0, -0.1)$);
        
        
        \draw[ground] (A8) -- (A1) -- ($(A1) + (-0.1, -0.1)$) -- ($(A8) + (-0.1, -0.1)$) -- (A8);
        
        
        \draw[pattern=north west lines,draw=none,minimum width=0.75cm,minimum height=0.3cm,inner sep=0pt,outer sep=0pt] (A4) -- (A5) -- ($(A5) + (0.17, 0)$) -- ($(A4) + (0.17,0)$) -- (A4);
        
        \draw[pattern=north east lines,draw=none,minimum width=0.75cm,minimum height=0.3cm,inner sep=0pt,outer sep=0pt] (A5) -- (A6) -- ($(A6) + (0, 0.1)$) -- ($(A5) + (0,0.1)$) -- (A5);
        
        \draw (A8) -- (A1) -- (A2);
        \draw (A4) -- (A5) -- (A6);
        \draw (A2) -- (A3) -- (A4);
        \draw (A6) -- (A7) -- (A8);
        
        \draw (B1) -- (B2) -- (B3) -- (B4) -- (B1);
        
        \filldraw (A1) circle (2pt) node[align=center,below]{${A}_1$};
        \filldraw (A2) circle (2pt) node[align=center,below]{${A}_2$};    
        \filldraw  (A3) circle (2pt) node[align=center,right]{${A}_3$};
        \filldraw (A4) circle (2pt) node[align=center,right]{};
        \filldraw (A5) circle (2pt) node[align=center,above]{${A}_5$};
        \filldraw (A6) circle (2pt) node[align=center,above]{${A}_6$};
        \filldraw (A7) circle (2pt) node[align=center,above]{};
        \filldraw (A8) circle (2pt) node[align=center,left]{${A}_8$};    
        
        \draw ($(A4) + (0.15,0)$)  node[align=center,right]{${A}_4$};
        \draw ($(A7) + (-0.15,0.15)$)  node[align=center,above]{${A}_7$};
        
        \filldraw (B1) circle (2pt) node[align=center,below]{${A}_9$};
        \filldraw (B2) circle (2pt) node[align=center,right]{${A}_{10}$};    
        \filldraw  (B3) circle (2pt) node[align=center,above]{${A}_{11}$};
        \filldraw (B4) circle (2pt) node[align=center,left]{${A}_{12}$};
        
        \draw ($($(A1)!0.5!(A2)$) + (0, -0.1) $) node[align=center,below]{$\Gamma_1$};
        
        \draw ($($(A2)!0.5!(A3)$) + (0, -0.1) $) node[align=center,below]{$\Gamma_2$};
        
        \draw ($($(A3)!0.5!(A4)$) + (0.15, 0.08) $) 
        node[align=center,right]{$\Gamma_3$};
        
        \draw ($($(A4)!0.5!(A5)$) + (0.3, -0.1) $) node[align=center,right]{$\Gamma_4$};
        
        \draw ($($(A5)!0.5!(A6)$) + (0.0, 0.1) $) node[align=center,above]{$\Gamma_5$};
        
        \draw ($($(A6)!0.5!(A7)$) + (-0.05, 0.25) $) node[align=center]{$\Gamma_6$};
        
        \draw ($($(A7)!0.5!(A8)$) + (0., 0.2) $) node[align=center,left]{$\Gamma_7$};
        
        \draw ($($(A8)!0.5!(A1)$) + (0., -0.1) $) node[align=center,below]{$\Gamma_8$};
        
        \draw ($($(B1)!0.5!(B2)$) + (0.2, 0.) $) node[align=center,below]{$\Gamma_9$};
        
        \draw ($($(B2)!0.5!(B3)$) + (0., 0.15) $) node[align=center,right]{$\Gamma_{10}$};
        
        \draw ($($(B3)!0.5!(B4)$) + (-0.1, 0.) $) node[align=center,above]{$\Gamma_{11}$};
        
        \draw ($($(B4)!0.5!(B1)$) + (0., -0.1) $) node[align=center,left]{$\Gamma_{12}$};
        
        \draw (Olabel) node[align=center]{$\Omega$};
    \end{tikzpicture}
    \caption{Example domain $\Omega$ and boundary partition $\mathcal{D} = \{2, 3, 6, 7, 9, 10, 11, 12\}, \mathcal{N} = \{1, 4, 5, 8\}$.}\label{fig:domain-example}
\end{figure}

\subsection{Defining the complex with boundary conditions}

Boundary conditions for a symmetric stress tensor are typically imposed on its normal component.
Consider 
therefore
the stress space $\hdivgamma{\Omega}{\mathbb{S}}$ consisting of the subspace of 
$\hdiv{\Omega}{\mathbb{S}}$
whose normal component vanishes on $\Gamma_D$ in the sense of distributions:
\begin{align}\label{eq:hdivgamma-def}
    \hdivgamma{\Omega}{\mathbb{S}} := \{ \tau \in \hdiv{\Omega}{\mathbb{S}}:\langle\tau \bn, v\rangle_{\Gamma} = 0 \ \forall~v \in H^1(\Omega; \mathbb{V}) \text{ with } v|_{\Gamma_N} = 0 \},
\end{align}
where $\langle\cdot, \cdot\rangle_{\Gamma}$ denotes the $(H^{-1/2}\times H^{1/2})(\Gamma)\text{ or }\mathbb{R}^2)$ dual pairing.
We seek to define appropriate spaces $\mathcal{P}_{1, \Gamma}(\Omega)$, $H^2_{\Gamma}(\Omega)$, and $L^2_{\Gamma}(\Omega; \mathbb{V})$ so that for $\hdivgamma{\Omega}{\mathbb{S}}$ defined as in~\cref{eq:hdivgamma-def}, the sequence
\begin{equation}\label{eq:exact-sequence-continuous-gen}
    \begin{tikzcd}
        0 \arrow{r} & \mathcal{P}_{1, \Gamma}(\Omega) \arrow{r}{\iota} & H^2_{\Gamma}(\Omega) \arrow{r}{\airy} & \hdivgamma{\Omega}{\mathbb{S}} \arrow{r}{\dive} & L^2_{\Gamma}(\Omega; \mathbb{V}) \arrow{r} & 0
    \end{tikzcd}
\end{equation}
is exact on simply connected domains. 

We first examine the image of the divergence operator acting on $\hdivgamma{\Omega}{\mathbb{S}}$. Consider the case in which Dirichlet conditions are imposed on the entire boundary ($|\Gamma| = |\Gamma_D|$),
and define the rigid body motions $\rigidbodies := \mathbb{R}^2 \oplus \spann \{ \bx^{\perp} := (-x^2, x^1)^{\top} \}$.
It is well known, for $\Omega$ simply connected, that $\rigidbodies$ is precisely the kernel of the symmetric gradient 
$\varepsilon$; 
since it is the kernel of the formal adjoint of the divergence, it is orthogonal to the image of the divergence acting on the trace-free subspace of $\hdiv{\Omega}{\mathbb{S}}$, i.e.~for any $\sigma \in \hdivgamma{\Omega}{\mathbb{S}}$ and $r\in\rigidbodies$, we have
\begin{equation}\label{eq:image-of-div-orthog}
    \int_{\Omega} r \cdot (\dive \sigma)~\dx = \int_{\partial\Omega}r\cdot\sigma\bn~\ds - \int_{\Omega}\underbrace{\nabla r:\sigma}_{\let\scriptstyle\displaystyle\substack{= \varepsilon(r):\sigma}}~\dx = 0.
\end{equation}
Moreover by choosing smooth and compactly supported $\sigma\in C^\infty_0(\Omega; \mathbb{S})\subset\hdivgamma{\Omega}{\mathbb{S}}$ and smooth $\tilde{r}\in C^\infty(\Omega; \mathbb{V})$, it is clear from~\cref{eq:image-of-div-orthog} that $\tilde{r}\in(\Div\hdivgamma{\Omega}{\mathbb{S}})^\perp$ if and only if $\varepsilon(\tilde{r}) = 0$ (where $\perp$ denotes the orthogonal complement),
and hence $\Div\hdivgamma{\Omega}{\mathbb{S}}$ is dense in the orthogonal complement of $\rigidbodies$ within $L^2(\Omega; \mathbb{V})$.

If however $|\Gamma_N| > 0$, then it is not clear that there are any further conditions on $\dive \sigma$. Consequently, we define $L^2_{\Gamma}(\Omega)$ as follows: 
\begin{align}\label{eq:l2gamma-def}
    L_{\Gamma}^2(\Omega; \mathbb{V}) &:= \begin{cases}
        \{ v \in L^2(\Omega; \mathbb{V}):(v, r) = 0~\forall~r \in \rigidbodies \} \simeq L^2(\Omega;\mathbb{V})/\rigidbodies & \text{if } |\Gamma| = |\Gamma_D|, \\ 
        L^2(\Omega; \mathbb{V}) & \text{otherwise}.
    \end{cases}
\end{align}
Later in~\cref{thm:h1-inversion-div-bc} we will show that in fact $\Div:\hdivgamma{\Omega}{\mathbb{S}}\to L^2_{\Gamma}(\Omega; \mathbb{V})$ is surjective in both of these cases.

We now want to define $H^2_{\Gamma}(\Omega) \subset H^2(\Omega)$ so that $\airy H^2_{\Gamma}(\Omega) \subset \hdivgamma{\Omega}{\mathbb{S}}$. If $r \in H^2(\Omega)$ and $(\airy r)\bn|_{\Gamma_D} = 0$, then
\begin{equation*}
    \partial_{\bt} (\grad r)^{\perp}|_{\Gamma_D} =  \partial_{\bt} ( \partial_{\bn} r \bt - \partial_{\bt} r \bn )|_{\Gamma_D} = (\airy r) \bn|_{\Gamma_D} = 0,
\end{equation*}
and so $\grad r$ is constant on $\Gamma_D$. If we denote by $\{ \Gamma_{D, j} : 0 \leq j \leq J_D \}$ the connected components of $\Gamma_D$, then
\begin{align}\label{eq:h2gamma-bcs-alt}
    r|_{\Gamma_{D, j}} \in \mathcal{P}_1(\Gamma_{D, j}), \quad \text{and} \quad \partial_{\bn} r|_{\Gamma_{D, j}} \in \mathcal{P}_0(\Gamma_{D, j}), \qquad 0 \leq j \leq J_D. 
\end{align}
Moreover, there exists an affine function $\ell$ such that $q = r - \ell$ satisfies $\airy q = \airy r \in \hdivgamma{\Omega}{\mathbb{S}}$ and the following boundary conditions:
\begin{align}\label{eq:h2gamma-bcs}
    q|_{\Gamma_{D, 0}} = \partial_{\bn} q|_{\Gamma_{D, 0}} = 0, \quad 
    q|_{\Gamma_{D, j}} \in \mathcal{P}_1(\Gamma_{D, j}) \quad \text{and} \quad \partial_{\bn} q|_{\Gamma_{D, j}} \in \mathcal{P}_0(\Gamma_{D, j}), \ \ 1 \leq j \leq J_D. 
\end{align}
In particular, if $\Omega$ is simply connected and $\Gamma = \Gamma_D$, then $q \in H^2_0(\Omega)$. Consequently, we define $H^2_{\Gamma}(\Omega)$ as follows:
\begin{equation}\label{eq:h2gamma-def}
    H_{\Gamma}^2(\Omega) := \{ q \in H^2(\Omega): \text{$q$ satisfies~\cref{eq:h2gamma-bcs}} \}.
\end{equation}
Note that the choice of $\Gamma_{D, 0}$ from the $J_D + 1$ connected components is arbitrary.
Note also that if $\Gamma = \Gamma_D$ and $\Omega$ is not simply connected (so that $\Gamma$ is multiply connected), then~\cref{eq:h2gamma-def}
does not coincide with the na\"ive definition of $H^2_{\Gamma}(\Omega)$, i.e.~$H^2_0(\Omega) := \{ q \in H^2(\Omega) : q|_{\Gamma} = \partial_{\bn} q|_{\Gamma} = 0 \} \subsetneq H^2_{\Gamma}(\Omega)$.
One encounters a similar situation with the stream functions of divergence-free $H^1_0(\Omega; \mathbb{V})$ vector fields: If $v \in H^1_0(\Omega; \mathbb{V})$ satisfies $\dive v \equiv 0$, then $v = \curl \psi$ for some $\psi \in H^2(\Omega)$, where the trace of $\psi$ is constant on the connected components of $\Gamma$~\cite[p. 42 eq. (3.11)]{Girault1986}.

We finally turn to $\mathcal{P}_{1, \Gamma}(\Omega)$. If $q \in H^2_{\Gamma}(\Omega)$ satisfies $\airy q \equiv 0$, then $q \in \mathcal{P}_1(\Omega)$ and $q|_{\Gamma_{D, 0}} = \partial_{\bn} q|_{\Gamma_{D, 0}} = 0$. Thus, $q \equiv 0$ provided that $|\Gamma_D| > 0$, which leads to the following choice of $\mathcal{P}_{1, \Gamma}(\Omega)$:
\begin{align}\label{eq:left-exact-seq-space-bc}
    \mathcal{P}_{1,\Gamma}(\Omega) &:= \begin{cases}
        \mathcal{P}_{1}(\Omega) & \text{if } |\Gamma_N| = |\Gamma|, \\
        \{0\} & \text{otherwise}.
    \end{cases}
\end{align}
We will see in~\cref{lem:exact-sequence-continuous-disp} that the sequence~\cref{eq:exact-sequence-continuous-gen} is exact on simply connected domains. 

\begin{remark}
    One may alternatively take $H^2_{\Gamma}(\Omega)$ to be the space $\{ r \in H^2(\Omega) : r \text{ satisfies~\cref{eq:h2gamma-bcs-alt}} \}$ and $\mathcal{P}_{1, \Gamma}(\Omega)$ to simply be $\mathcal{P}_1(\Omega)$. Here, we opt for the choices~\cref{eq:h2gamma-def,eq:left-exact-seq-space-bc} so that $H^2_{\Gamma}(\Omega) = H^2_0(\Omega)$ when $\Omega$ is simply connected and $|\Gamma| = |\Gamma_D|$.
\end{remark}
With all of the spaces in the complex~\cref{eq:exact-sequence-continuous-gen} in hand, we now turn to properties of the complex.

\subsection{Properties of the continuous elasticity complex}

The first key property is that the divergence operator from $\hdivgamma{\Omega}{\mathbb{S}}$~\cref{eq:hdivgamma-def} to $L^2_{\Gamma}(\Omega)$~\cref{eq:l2gamma-def} is surjective and admits a bounded right inverse from $L^2_{\Gamma}(\Omega)$ to $H^1(\Omega; \mathbb{S}) \cap \hdivgamma{\Omega}{\mathbb{S}}$.
\begin{theorem}\label{thm:h1-inversion-div-bc}
    \emph{($H^1(\Omega; \mathbb{S})$ inversion of the divergence.)}
    For every $u \in L^2_{\Gamma}(\Omega; \mathbb{V})$, there exists $\sigma \in H^1(\Omega; \mathbb{S}) \cap \hdivgamma{\Omega}{\mathbb{S}}$ such that
    \begin{equation}\label{eq:h1-inversion-div-bc}
        \dive \sigma = u \quad \text{and} \quad \|\sigma\|_{1, \Omega} \leq C \|u\|_{0, \Omega},
    \end{equation}
    where $C$ is independent of $u$.
\end{theorem}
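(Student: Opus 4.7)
The approach is to combine the classical Bogovski\u{\i} inversion of the scalar divergence with the exactness of the 2D elasticity complex: first I build a (non-symmetric) matrix-valued right inverse, and then I correct the skew part via the complex while preserving the divergence and the essential boundary condition.

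First, I apply Bogovski\u{\i} componentwise: for each component $u_i$ of $u \in L^2_\Gamma(\Omega;\mathbb{V})$, construct $w_i \in H^1(\Omega;\mathbb{V})$ with $\div w_i = u_i$, $w_i\cdot\bn = 0$ on $\Gamma_D$, and $\|w_i\|_1 \leq C\|u_i\|_0$. In the pure Dirichlet case $\Gamma = \Gamma_D$, the hypothesis $u\perp \rigidbodies$ supplies (in particular) the compatibility $u_i \perp 1$ required to invoke Bogovski\u{\i} row by row. Stacking the $w_i$ as the rows of a matrix field yields $W\in H^1(\Omega;\mathbb{R}^{2\times 2})$ with $\div W = u$, $W\bn|_{\Gamma_D} = 0$, and $\|W\|_1 \leq C\|u\|_0$.

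Next, I symmetrize. Decompose $W = \sym W + \alpha J$ with $J = \begin{pmatrix} 0 & 1 \\ -1 & 0 \end{pmatrix}$ and $\alpha\in H^1(\Omega)$ satisfying $\|\alpha\|_1 \leq C\|u\|_0$; a direct computation gives $\div(\alpha J) = \nabla^\perp\alpha$, so $\div(\sym W) = u - \nabla^\perp\alpha$. It remains to produce a symmetric tensor $\tau\in H^1(\Omega;\mathbb{S})\cap\hdivgamma{\Omega}{\mathbb{S}}$ with $\div \tau = \nabla^\perp\alpha$ and $\|\tau\|_1 \leq C\|\alpha\|_1$; then $\sigma := \sym W + \tau$ has all the claimed properties. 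Existence of such a $\tau$ follows from the exactness of the continuous complex~\cref{eq:exact-sequence-continuous-gen}, which is established in~\cref{lem:exact-sequence-continuous-disp}. Congenial to the BGG spirit of the paper, the cleanest route is to build bounded continuous Poincar\'e operators for~\cref{eq:exact-sequence-continuous-gen}---continuous analogs of the polynomial-preserving operators constructed in~\cref{sec:poincare-element}---which deliver $\tau$ as the value of a bounded operator applied to $\nabla^\perp\alpha$, with a compatible choice preserving the homogeneous boundary condition on $\Gamma_D$.

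The hard part is this last step. The obvious PDE route---solve the elasticity equation $-\div(\mathcal{A}^{-1}\varepsilon(v)) = \nabla^\perp\alpha$ and set $\tau := -\mathcal{A}^{-1}\varepsilon(v)$---requires $H^2$ regularity of $v$, which fails on general polygonal domains with mixed boundary conditions. The continuous BGG/Poincar\'e construction circumvents this by relying only on scalar divergence inversion and algebraic manipulations within the complex, never invoking second-order PDE regularity; the mixed-boundary case reduces to the same framework by carefully tracking which components of the traces must vanish.
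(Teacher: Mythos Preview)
Your outline has a genuine gap at the symmetrization step. From $W\bn|_{\Gamma_D}=0$ you only get $(\sym W)\bn|_{\Gamma_D}=-\alpha J\bn = \alpha\bt$, which is nonzero unless $\alpha|_{\Gamma_D}=0$. Since you require $\tau\in\hdivgamma{\Omega}{\mathbb{S}}$, your candidate $\sigma=\sym W+\tau$ satisfies $\sigma\bn|_{\Gamma_D}=\alpha\bt\neq0$, so $\sigma\notin\hdivgamma{\Omega}{\mathbb{S}}$. One could try to enforce $W|_{\Gamma_D}=0$ (full trace, not just normal) to make $\alpha|_{\Gamma_D}=0$, but then in the pure traction case $|\Gamma|=|\Gamma_D|$ a short computation gives $(\nabla^{\perp}\alpha,\bx^{\perp})_{\Omega}=-2\int_{\Omega}\alpha$, so $\nabla^{\perp}\alpha\notin L^2_{\Gamma}(\Omega;\mathbb{V})=\ltwomodrm(\Omega;\mathbb{V})$ and you cannot invoke surjectivity of $\dive$ onto $L^2_{\Gamma}$ to build~$\tau$.

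There is also a circularity: you justify the existence of $\tau$ by invoking the exactness in~\cref{lem:exact-sequence-continuous-disp}, but the paper's proof of~\cref{lem:exact-sequence-continuous-disp} (in~\cref{sec:proof-of-exact-sequence-continuous-disp}) rests on~\cref{cor:h1-inversion-div-bc-avg}, which in turn uses~\cref{thm:h1-inversion-div-bc}. And even granting exactness, that only yields $\tau\in\hdivgamma{\Omega}{\mathbb{S}}$, not $\tau\in H^1(\Omega;\mathbb{S})$; the ``continuous Poincar\'e operators with boundary conditions on a polygon with holes and mixed data'' that you invoke are precisely the hard object to build, and your final paragraph does not build them.

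The paper sidesteps the regularity obstruction you identify by \emph{changing the domain}: it extends $u$ by zero to a ball $B_R\supset\Omega$, solves the pure-displacement elasticity problem there (where interior $H^2$ regularity \emph{does} hold), and takes $\varepsilon(\rho)|_{\Omega}\in H^1(\Omega;\mathbb{S})$ as a first symmetric candidate. The essential boundary condition on $\partial\Omega$ is then enforced not by a generic Poincar\'e operator but by an explicit $\airy$-correction: on each connected boundary component one integrates $\varepsilon(\rho)\bn$ along $\partial\Omega_m$ to manufacture trace data $(f,g)$ for an $H^3$ potential $q_m$ with $(\airy q_m)\bn=\varepsilon(\rho)\bn$ on $\partial\Omega_m$, and the rigid-body orthogonality $u\perp\rigidbodies$ is exactly what makes these contour integrals close up. This is the step that has no analog in your proposal.
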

The proof of~\cref{thm:h1-inversion-div-bc} appears in~\cref{sec:proof-of-h1-inversion-div-bc}. The extra regularity on the tensor $\sigma$ in~\cref{thm:h1-inversion-div-bc} plays a crucial role in the ensuing analysis.

We now turn to the dimension of the space of \textit{harmonic forms}
\begin{equation}\label{eq:harmonic-forms-def}
    \mathfrak{H}_{\Gamma}(\Omega) := \{ \phi \in \hdivgamma{\Omega}{\mathbb{S}} : \dive \phi \equiv 0 \text{ and } (\phi, \airy q)_{\dive, \Omega} = 0 \ \forall~q \in H^2_{\Gamma}(\Omega) \}, 
\end{equation}
which consist of divergence-free tensors which cannot be expressed as the airy of some $H^2_{\Gamma}(\Omega)$ function. To this end, we define the index set $\mathfrak{I}$ consisting of connected components of the boundary $\partial \Omega$ which intersect $\Gamma_N$, and $\mathfrak{I}^*$ with all but one element in $\mathfrak{I}$:
\begin{equation}\label{eq:index-set-def}
    \mathfrak{I} := \{ 0 \leq m \leq M : |\partial \Omega_m \cap \Gamma_N| > 0 \} \quad \text{and} \quad \mathfrak{I}^* := \begin{cases}
        \emptyset & \text{if } \mathfrak{I} = \emptyset, \\
        \mathfrak{I} \setminus \{ \min \mathfrak{I} \} & \text{otherwise}.
    \end{cases}.
\end{equation}
Then the following result shows that $\dim \mathfrak{H}_{\Gamma}(\Omega) = 3|\mathfrak{I}^*|$:
\begin{lemma}\label{lem:exact-sequence-continuous-disp}
    There holds $\dim \mathfrak{H}_{\Gamma}(\Omega) = 3 |\mathfrak{I}^*|$ and the kernel of $\airy : H^2_{\Gamma}(\Omega) \to \hdivgamma{\Omega}{\mathbb{S}}$ is $\mathcal{P}_{1, \Gamma}(\Omega)$. Consequently, if $|\mathfrak{I}^*| = 0$, then the sequence~\cref{eq:exact-sequence-continuous-gen} is exact.
\end{lemma}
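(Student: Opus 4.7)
The kernel of $\airy$ on $H^2_\Gamma(\Omega)$ is computed directly: $\airy q = 0$ is equivalent to the Hessian of $q$ vanishing identically, i.e., $q \in \mathcal{P}_1(\Omega)$. When $|\Gamma_D| > 0$, the Dirichlet conditions $q|_{\Gamma_{D,0}} = 0$ and $\partial_{\bn} q|_{\Gamma_{D,0}} = 0$ from~\cref{eq:h2gamma-bcs} force any such affine $q$ to vanish identically, so the kernel is $\{0\} = \mathcal{P}_{1,\Gamma}(\Omega)$; otherwise $H^2_\Gamma(\Omega) = H^2(\Omega)$ and $\mathcal{P}_{1,\Gamma}(\Omega) = \mathcal{P}_1(\Omega)$, and the kernel is all of $\mathcal{P}_1(\Omega)$. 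The exactness statement for the case $|\mathfrak{I}^*| = 0$ will then follow from the harmonic form count together with~\cref{thm:h1-inversion-div-bc}.

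To count $\dim\mathfrak{H}_\Gamma(\Omega)$, I would introduce the moment map on $\ker(\div)\cap\hdivgamma{\Omega}{\mathbb{S}}$,
\begin{equation*}
    M_m[r](\phi) := \int_{\partial\Omega_m\cap\Gamma_N} r \cdot \phi\bn \,\ds, \qquad m \in \mathfrak{I},\ r \in \rigidbodies,
\end{equation*}
using that $\phi\bn = 0$ on $\Gamma_D$. Integration by parts over $\Omega$ against $r\in\rigidbodies$, combined with $\div\phi = 0$ and $\varepsilon(r) = 0$, yields the global compatibility $\sum_{m\in\mathfrak{I}} M_m[r](\phi) = 0$, so $M$ takes values in a subspace of $(\rigidbodies^*)^{\mathfrak{I}}$ of dimension exactly $3|\mathfrak{I}^*|$. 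Moreover $M$ annihilates $\airy(H^2_\Gamma(\Omega))$: if $\phi = \airy q$ with $q\in H^2_\Gamma(\Omega)$, then using $(\airy q)\bn = \partial_\bt(\grad q)^\perp$ and integrating by parts along the closed loop $\partial\Omega_m$ reduces $M_m[r]$ to $-\int_{\partial\Omega_m} \partial_\bt r \cdot (\grad q)^\perp \, \ds$, which vanishes trivially for $r$ constant, and reduces to $-c\oint_{\partial\Omega_m} \partial_\bt q \, \ds = 0$ for $r = c\bx^\perp$ by single-valuedness of $q$. Hence $M$ descends to a map $\overline{M}:\mathfrak{H}_\Gamma(\Omega)\to\mathbb{R}^{3|\mathfrak{I}^*|}$.

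It then remains to verify that $\overline{M}$ is an isomorphism. For injectivity, the vanishing of all translation moments $M_m[\be_i]$ for $m\in\mathfrak{I}$ (with automatic vanishing for $m\notin\mathfrak{I}$ and the compatibility constraint) are precisely the flux conditions on every boundary component that permit writing each row $\phi^i$ of $\phi$ as $\vcurl\psi^i$ for globally-defined $\psi^i \in H^1(\Omega)$; symmetry of $\phi$ then makes $\psi := (\psi^1,\psi^2)$ divergence-free, after which the vanishing rotational moments $M_m[\bx^\perp]$ are exactly the flux conditions needed to write $\psi = \vcurl q$ for some $q\in H^2(\Omega)$, giving $\phi = \airy q$. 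A shift by an affine function puts $q$ into $H^2_\Gamma(\Omega)$; crucially, the conditions on $\Gamma_{D,j}$ for $j\geq 1$ are automatic from $\phi\bn = 0$ on $\Gamma_D$, by the argument preceding~\cref{eq:h2gamma-bcs}. For surjectivity, given target data $(\mu_m)_{m\in\mathfrak{I}}$ with $\sum_m\mu_m=0$, for each $m\in\mathfrak{I}^*$ one constructs an explicit $\phi_{m}\in\ker(\div)\cap\hdivgamma{\Omega}{\mathbb{S}}$ concentrated in a tubular neighborhood of a path from $\partial\Omega_{\min\mathfrak{I}}$ to $\partial\Omega_m$ whose only nontrivial moments are $\mu_m$ on $\partial\Omega_m$ and $-\mu_m$ on $\partial\Omega_{\min\mathfrak{I}}$; such a $\phi_m$ can be built by extending a rigid-body field realizing $\mu_m$ into the tube and inverting an auxiliary divergence problem via~\cref{thm:h1-inversion-div-bc}.

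The main obstacle is the injectivity step, which requires correctly identifying the three rigid-body moments per boundary component as the two successive period conditions (first vector, then scalar) needed to lift a divergence-free symmetric tensor to a globally single-valued Airy potential on a multiply connected domain. The affine-shifting postprocessing is routine given the analysis already presented in the text around~\cref{eq:h2gamma-bcs,eq:h2gamma-def}.
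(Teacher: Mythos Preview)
Your proposal is correct and follows the same overarching strategy as the paper: both characterize $\mathfrak{H}_{\Gamma}(\Omega)$ via the boundary moment map $\phi \mapsto (\langle \phi\bn, r_l\rangle_{\partial\Omega_m})_{m\in\mathfrak{I}^*,\,l}$ and show it is an isomorphism onto $\mathbb{R}^{3|\mathfrak{I}^*|}$. The kernel-of-$\airy$ computation and the verification that $M$ annihilates $\airy H^2_{\Gamma}(\Omega)$ are the same.

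The execution differs in two places. For injectivity, the paper defers (via the proof of \cref{lem:projection-matching}) to a citation of Geymonat's theorem to produce the global Airy potential once all rigid-body moments vanish; you instead outline the classical proof of that theorem by successive curl-liftings, correctly identifying the translation moments as the period conditions for the row stream functions $\psi^i$ and the rotational moment as the period condition for the scalar potential $q$. This is more self-contained but of course longer. For surjectivity, the paper directly invokes \cref{cor:h1-inversion-div-bc-avg} (which packages \cref{thm:h1-inversion-div-bc} and a trace-matching lemma) to manufacture a divergence-free $\sigma\in\hdivgamma{\Omega}{\mathbb{S}}$ with prescribed moments, and then projects onto $(\airy H^2_\Gamma)^{\perp}$. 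Your tubular-neighborhood construction would also work but is more labor-intensive than simply calling the right inverse already built in the appendix; once you have \cref{thm:h1-inversion-div-bc} and the trace result in hand, the localization to tubes is unnecessary.
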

One immediate consequence of~\cref{lem:exact-sequence-continuous-disp} is that the sequence~\cref{eq:exact-sequence-continuous-gen} is exact on simply connected domains. The dimension of $\mathfrak{H}_{\Gamma}(\Omega)$ appears to be a novel result for mixed boundary conditions ($|\Gamma_D| > 0$) and for $\Omega$ not simply connected. The proof of~\cref{lem:exact-sequence-continuous-disp} appears in~\cref{sec:proof-of-exact-sequence-continuous-disp}.

\section{Finite element spaces}\label{sec:fe-spaces}

Let $\mathcal{T}$ be a shape-regular partitioning of the domain $\Omega$ into triangles such that the nonempty intersection of any two distinct elements from $\mathcal{T}$ is either a single common vertex or a single common edge. The parameter $h > 0$ will denote the diameter of the largest element: $h := \max_{K \in \mathcal{T}} h_K$ and $h_K := \mathrm{diam}(K)$.  We further assume that mesh vertices are placed on the intersection of $\overline{\Gamma_D}$ and $\overline{\Gamma_N}$ (see e.g.~$A_2$ in~\cref{fig:domain-example}).

We use the following spaces for $p \geq 2$:
\begin{align*}
    \htwospace^{p} &:= \{ q \in H^2(\Omega):q|_{K} \in \mathcal{P}_{p}(K) \ \forall~K \in \mathcal{T}, \text{ $q$ is $C^2$ at element vertices} \}, \\
    \hdivspace^{p} &:= \{ \sigma \in \hdiv{\Omega}{\mathbb{S}}:\sigma|_{K} \in \mathcal{P}_p(K; \mathbb{S}) \ \forall~K \in \mathcal{T}, \text{ $\sigma$ is $C^0$ at element vertices} \}, \\
    \ltwospace^{p} &:= \{ v \in L^2(\Omega; \mathbb{V}):v|_{K} \in \mathcal{P}_{p}(K; \mathbb{V}) \ \forall~K \in \mathcal{T} \}.
\end{align*}
The spaces $\htwospace^{p}$ and $\ltwospace^{p}$ are well-known: $\htwospace^{p}$, $p \geq 5$, is the  Argyris/TUBA finite element~\cite{Argyris1968} of arbitrary order and $\ltwospace^{p}$ is the vector discontinuous Lagrange element of degree $p$. The space $\hdivspace^{p}$, $p \geq 3$, which is a superset of the conforming Arnold--Winther stress element~\cite{Arnold2002} (constrained to have divergence of degree $p - 2$), is an alternative characterization of the triangular Hu--Zhang element~\cite{Hu2014}, as we show in~\cref{sec:hu-zhang}. 

The corresponding finite element spaces with boundary conditions are simply
\begin{align}\label{eq:fem-spaces-bcs}
    \htwospace_{\Gamma}^{p} := \htwospace^{p} \cap H_{\Gamma}^2(\Omega), \quad \hdivspace_{\Gamma}^{p} := \hdivspace^{p} \cap \hdivgamma{\Omega}{\mathbb{S}}, \quad \text{and} \quad \ltwospace_{\Gamma}^{p} := \ltwospace^{p} \cap L^2_{\Gamma}(\Omega; \mathbb{V}),
\end{align}
where $H^2_{\Gamma}(\Omega)$, $\hdivgamma{\Omega}{\mathbb{S}}$, and $L^2_{\Gamma}(\Omega)$ are defined in~\cref{eq:h2gamma-def},~\cref{eq:hdivgamma-def}, and~\cref{eq:l2gamma-def}, respectively. For $p \geq 3$, the spaces form the following conforming subcomplex of~\cref{eq:exact-sequence-continuous-gen}:
\begin{equation}\label{eq:exact-sequence-fem}
    \begin{tikzcd}
        0 \arrow{r} & \mathcal{P}_{1,\Gamma}(\Omega) \arrow{r}{\iota} & \htwospace_{\Gamma}^{p+2} \arrow{r}{\airy} & \hdivspace_{\Gamma}^{p} \arrow{r}{\dive}  & \ltwospace_{\Gamma}^{p-1} \arrow{r} & 0.
    \end{tikzcd} 
\end{equation}

\subsection{Local degrees of freedom for the stress space}

\begin{lemma}\label{lem:dim-interior-stress}
    \emph{(Dimension of the normally vanishing local tensor space.)}
    For $p \geq 0$ and $K \in \mathcal{T}$, let
    \begin{equation*}
        \hdivspace_{0}^p(K; \mathbb{S}) := \mathcal{P}_{p}(K; \mathbb{S}) \cap \hdivz{K}{\mathbb{S}} = \{ \tau \in \mathcal{P}_p(K; \mathbb{S}):\tau \bn|_{\partial K} = 0 \}.
	\end{equation*}
    Then $\dim \hdivspace_{0}^p(K; \mathbb{S}) = \frac{3}{2} p(p - 1)$.
\end{lemma}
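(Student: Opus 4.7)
The proof I would give is a rank--nullity computation for the boundary trace map
\[
T \colon \mathcal{P}_{p}(K;\mathbb{S}) \longrightarrow \bigoplus_{e\subset\partial K} \mathcal{P}_{p}(e;\mathbb{V}), \qquad T\tau := (\tau\bn_{e}|_{e})_{e},
\]
whose kernel is precisely $\hdivspace_{0}^{p}(K;\mathbb{S})$. Since a symmetric $2\times 2$ tensor has $3$ independent entries, $\dim \mathcal{P}_{p}(K;\mathbb{S}) = 3\binom{p+2}{2} = \tfrac{3}{2}(p+1)(p+2)$. The task reduces to identifying the image of $T$ precisely.

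The image lies in the subspace $W\subset \bigoplus_{e}\mathcal{P}_{p}(e;\mathbb{V})$ cut out by the vertex compatibility conditions. Indeed, at each vertex $v$ shared by edges $e$ and $e'$ with (linearly independent) outward normals $\bn_{e},\bn_{e'}$, the symmetry of $\tau$ forces
\[
\bn_{e'}\cdot (\tau\bn_{e})(v) \;=\; \bn_{e'}^{\top}\tau(v)\bn_{e} \;=\; \bn_{e}^{\top}\tau(v)\bn_{e'} \;=\; \bn_{e}\cdot (\tau\bn_{e'})(v),
\]
so a pair $(v_{e}(v),v_{e'}(v))\in\mathbb{R}^{4}$ in the image satisfies the single linear relation $\bn_{e'}\cdot v_{e}(v)=\bn_{e}\cdot v_{e'}(v)$. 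Since $\{\bn_{e},\bn_{e'}\}$ is a basis of $\mathbb{R}^{2}$, this is the only such relation at each vertex: given any four scalars satisfying it, one recovers a unique $\tau(v)\in\mathbb{S}$ with the prescribed pair of normal traces. Counting, $\dim W = 3\cdot 2(p+1) - 3 = 6p+3$.

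The main obstacle is establishing that $T$ actually surjects onto $W$. I would do this constructively in two steps. First, at each of the three vertices $v$, the compatibility of the data $(v_{e})_{e}$ uniquely determines a value $\tau(v)\in\mathbb{S}$, and one can realise these vertex values by summing nodal Lagrange-type shape functions $\sum_{v} \phi_{v}(x)\,\tau(v)$ where $\phi_{v}\in\mathcal{P}_{p}(K)$ satisfies $\phi_{v}(w)=\delta_{vw}$ at the vertices; this produces a tensor $\tau_{\mathrm{vert}}\in\mathcal{P}_{p}(K;\mathbb{S})$ whose traces agree with the target data at every vertex. Subtracting $T\tau_{\mathrm{vert}}$, we reduce to the case of edge data $(v_{e})$ that vanishes at every vertex. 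On each edge $e_{k}$ (with tangent $\bt_{k}$, normal $\bn_{k}$, and the two barycentric coordinates $\lambda_{i},\lambda_{j}$ of the other vertices vanishing at the opposite endpoints), the two scalars $v_{e_{k}}\cdot\bn_{k}$ and $v_{e_{k}}\cdot\bt_{k}$ are arbitrary degree-$p$ polynomials on $e_{k}$ vanishing at both endpoints; these are matched by symmetric tensor fields of the form $\lambda_{i}\lambda_{j}\,q$ for suitable $q\in\mathcal{P}_{p-2}(K;\mathbb{S})$, whose normal traces on the other two edges automatically vanish. Putting the edge and vertex contributions together yields a preimage of any element of $W$.

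Applying rank--nullity then gives
\[
\dim\hdivspace_{0}^{p}(K;\mathbb{S}) \;=\; \dim\mathcal{P}_{p}(K;\mathbb{S}) - \dim W \;=\; \tfrac{3}{2}(p+1)(p+2) - (6p+3) \;=\; \tfrac{3}{2}p(p-1),
\]
as claimed. I expect the genuinely nontrivial ingredient to be the surjectivity of $T$ onto $W$, and in particular the verification that the explicit edge-bubble construction spans all admissible vanishing-at-vertex edge data; alternatively, one could sidestep the surjectivity step by exhibiting a family of $\tfrac{3}{2}p(p-1)$ linearly independent interior bubbles (e.g.\ the Hu--Zhang interior shape functions from \cref{sec:hu-zhang}) to obtain the matching lower bound on $\dim\ker T$, combined with the upper bound $\dim\ker T \geq \dim\mathcal{P}_{p}(K;\mathbb{S}) - \dim W$ that follows from $\ima T\subset W$ alone.
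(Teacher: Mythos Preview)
Your primary argument is correct and follows essentially the same dimension-counting route as the paper: both compute $\dim\mathcal{P}_p(K;\mathbb{S})=\tfrac{3}{2}(p+1)(p+2)$ and identify $6p+3$ boundary constraints. The only difference in execution is that the paper obtains the matching lower bound by exhibiting the explicit interior bubble space $\bigoplus_{i}\lambda_{i+1}\lambda_{i+2}\,\mathcal{P}_{p-2}(K)\,(\bt_i\otimes\bt_i)\subset\Sigma_0^p(K;\mathbb{S})$ of dimension $\tfrac{3}{2}p(p-1)$, whereas you instead prove surjectivity of $T$ onto $W$ directly; these are two sides of the same coin, and your surjectivity argument is in fact what rigorously justifies the paper's implicit claim that its $9+6(p-1)$ vanishing conditions are linearly independent.

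One slip worth flagging: in your closing alternative, the inclusion $\ima T\subset W$ yields only the \emph{lower} bound $\dim\ker T\geq\dim\mathcal{P}_p(K;\mathbb{S})-\dim W$, not an upper bound, so as written that route produces two lower bounds and no upper bound. To make that alternative work you would still need either surjectivity of $T$ (your main argument) or linear independence of the $6p+3$ boundary functionals---which is exactly how the paper's version of the argument proceeds.
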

\begin{proof}
    We have $\dim\mathcal{P}_p(K; \mathbb{S}) = (\dim\mathcal{P}_p(K))\dim\mathbb{S} = \frac32(p + 2)(p + 1)$.
    Let $\tau$ be a member of this superspace $\mathcal{P}_p(K; \mathbb{S})$; we count the Lagrange degrees of freedom
    (DOFs)
    which must vanish on $\tau$ if it moreover lies in $\hdivspace_0^p(K; \mathbb{S})$.
    \begin{itemize}[leftmargin = 4mm]
        \item At each vertex, the normal components of $\tau$ associated with the two adjoining edges must vanish; since these normals are linearly independent, in fact $\tau$ must vanish. This means the evaluations of each of its 3 components must vanish at each vertex (9 DOFs).
        \item On each edge $\gamma$, we have $\tau\bn\in\mathcal{P}_p(\gamma; \mathbb{V})$; if $\tau\bn = 0$ on $\gamma$, then it must vanish under the $2(p + 1)$ DOFs associated with 
    $\mathcal{P}_p(\gamma; \mathbb{V})$.
    Not counting the two vertex evaluations of each component at the ends of each $\gamma$, this gives $3\cdot 2(p + 1 - 2) = 6(p - 1)$ DOFs.
    \end{itemize}
    This shows $\dim\hdivspace_0^p(K; \mathbb{S})\leq \frac32(p + 2)(p + 1) - (9 + 6(p - 1)) = \frac{3}{2} p(p - 1)$.

    Let now $\{\bt_i\}_{i = 1}^3$ denote the tangents to the edges of $K$, 
    and let $\{\lambda_i\}_{i = 1}^3$ denote the barycentric coordinates on $K$.
    Consider the basis of $\mathbb{S}$ given by 
    $\{\bt_i\otimes\bt_i\}_{i = 1}^3$.
    It is easy to reason that $\hdivspace^p_0(K; \mathbb{S})$ contains the space
    \begin{equation}\label{eq:interior-space}
        \lambda_2\lambda_3\mathcal{P}_{p - 2}(K)(\bt_1\otimes\bt_1) \bigoplus \lambda_1\lambda_3\mathcal{P}_{p - 2}(K)(\bt_2\otimes\bt_2) \bigoplus \lambda_1\lambda_2\mathcal{P}_{p - 2}(K)(\bt_3\otimes\bt_3),
    \end{equation}
    which has dimension $3\dim\mathcal{P}_{p - 2}(K) = \frac32p(p - 1)$.
\end{proof}

The proof of~\cref{lem:dim-interior-stress} shows that in fact $\hdivspace_0^p(K; \mathbb{S})$ coincides with the space in~\cref{eq:interior-space}.
Now, let $\mathcal{V}$ and $\mathcal{E}$ denote the set of element vertices and edges. Similarly, given an element $K \in \mathcal{T}$, let $\mathcal{V}_K$ denote its vertices and $\mathcal{E}_K$ its edges. 

\begin{lemma}\label{lem:hdiv-local-dofs}
    \emph{(Unisolvency for the local tensor space.)}
    For $p \geq 1$, the space $\mathcal{P}_p(K; \mathbb{S})$ is unisolvent with respect to the following DOFs for $\sigma \in \mathcal{P}_p(K; \mathbb{S})$:
    \begin{enumerate}[label = (\roman*)]
        \item 3 values per vertex $\bx\in \mathcal{V}_K$: $\sigma(\bx)$,
        \item $2(p-1)$ values per edge $\gamma \in \mathcal{E}_K$:
        \begin{align}\label{eq:edge-dofs}
            \int_{\gamma} (\sigma \bn) \cdot r~\ds \qquad \forall~r \in \mathcal{P}_{p-2}(\gamma; \mathbb{V}),
        \end{align}
        \item $\frac{3}{2} p(p-1)$ values:
        \begin{align}\label{eq:internal-dofs}
            \int_{K} \sigma : \tau \ \dx \qquad \forall~\tau \in \hdivspace_0^{p}(K; \mathbb{S}).
        \end{align}
    \end{enumerate}
\end{lemma}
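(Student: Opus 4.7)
The plan is to prove unisolvency by the standard dimension-matching strategy: first verify that the number of listed DOFs equals $\dim \mathcal{P}_p(K; \mathbb{S})$, and then show that any $\sigma \in \mathcal{P}_p(K;\mathbb{S})$ annihilated by all of the DOFs must vanish identically. For the count, one has $\dim \mathcal{P}_p(K; \mathbb{S}) = \tfrac{3}{2}(p+1)(p+2)$, and the three classes of DOFs contribute
\begin{equation*}
    3 \cdot 3 + 3 \cdot 2(p-1) + \tfrac{3}{2} p(p-1) = 9 + 6(p-1) + \tfrac{3}{2}p(p-1) = \tfrac{3}{2}(p+1)(p+2),
\end{equation*}
which matches. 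So it remains to show the nullity implication.

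I would proceed in three steps, peeling off boundary data first and then killing the interior. Assume all DOFs in (i)--(iii) vanish on $\sigma$. From (i), $\sigma(\bx) = 0$ as a symmetric matrix at each vertex $\bx \in \mathcal{V}_K$, so \emph{a fortiori} $(\sigma\bn_\gamma)(\bx) = 0$ for every edge $\gamma$ incident to $\bx$ and the outward unit normal $\bn_\gamma$ of that edge. Fix an edge $\gamma \in \mathcal{E}_K$ with endpoints $\bx_a, \bx_b$, and let $\lambda_a, \lambda_b$ denote the affine barycentric coordinates on $\gamma$ vanishing at $\bx_a, \bx_b$ respectively. Then $\sigma\bn_\gamma|_\gamma \in \mathcal{P}_p(\gamma; \mathbb{V})$ vanishes at both endpoints, so it factors as $\sigma\bn_\gamma|_\gamma = \lambda_a \lambda_b \, s$ for some $s \in \mathcal{P}_{p-2}(\gamma; \mathbb{V})$. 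Taking $r = s$ in the edge DOF~\cref{eq:edge-dofs} yields
\begin{equation*}
    0 = \int_\gamma (\sigma\bn_\gamma) \cdot s\, \ds = \int_\gamma \lambda_a \lambda_b |s|^2\, \ds,
\end{equation*}
and since $\lambda_a \lambda_b > 0$ in the interior of $\gamma$, this forces $s \equiv 0$, so $\sigma\bn_\gamma|_\gamma = 0$. Repeating over all three edges places $\sigma \in \hdivspace_0^p(K; \mathbb{S})$.

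Once $\sigma$ is known to lie in $\hdivspace_0^p(K; \mathbb{S})$, it is itself an admissible test function for the interior DOFs~\cref{eq:internal-dofs}; taking $\tau = \sigma$ gives
\begin{equation*}
    0 = \int_K \sigma : \sigma\, \dx = \|\sigma\|_{0,K}^2,
\end{equation*}
so $\sigma \equiv 0$, completing the proof. I do not anticipate a substantial obstacle here: the only subtle step is the edge factorization, and this is made routine by the fact that the vertex DOFs in (i) prescribe the full matrix $\sigma(\bx)$ (not merely some component of it), which automatically annihilates $\sigma\bn$ at each endpoint regardless of which of the two adjoining edges is under consideration. The degenerate low-order case $p = 1$ is likewise unproblematic, since then~\cref{lem:dim-interior-stress} gives $\dim \hdivspace_0^1(K; \mathbb{S}) = 0$ and the interior step is vacuous, while the edge step reduces to the vertex information.
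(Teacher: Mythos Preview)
Your proof is correct and follows essentially the same route as the paper's: vanish at vertices, factor $\sigma\bn|_\gamma$ through the edge bubble $\lambda_a\lambda_b$, test against the remaining $\mathcal{P}_{p-2}$ factor to kill $\sigma\bn$ on $\partial K$, and finally test $\sigma$ against itself in the interior DOFs. The only difference is that you spell out the dimension count explicitly, which the paper leaves implicit via~\cref{lem:dim-interior-stress}.
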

\begin{proof}
    The $p = 1$ case is clear, noting in this case that $\hdivspace_0^p(K; \mathbb{S}) = 0$ and by convention $\mathcal{P}_{-1} = \emptyset$, so now assume the DOFs vanish on some $\sigma\in\mathcal{P}_p(K; \mathbb{S})$ with $p\geq 2$.
    Since $\sigma$ vanishes at each vertex $\bx\in\mathcal{V}_K$, then on each edge $\gamma\in\mathcal{E}_K$ connecting vertices $k, j$ of $K$, we have $\sigma\in \lambda_k\lambda_j\mathcal{P}_{p - 2}(\gamma; \mathbb{S})$,
    and in particular we may write $\sigma\bn = \lambda_k\lambda_j\omega$ for some $\omega\in\mathcal{P}_{p - 2}(\gamma; \mathbb{V})$.
    Choosing $r = \omega$ in~\cref{eq:edge-dofs}, 
    we must have $\sigma\bn = 0$ on each $\gamma$ and hence on $\partial K$.
    Thus $\sigma\in\hdivspace_0^p(K; \mathbb{S})$, so by orthogonality to the same space~\cref{eq:internal-dofs} we have $\sigma = 0$.
\end{proof}

The local degrees of freedom for each of the spaces in the discrete complex  \cref{eq:exact-sequence-fem} as illustrated in~\cref{fig:hu-zhang}. Observe that
that vertex basis functions for the Hu--Zhang space (dual to their $C^0$ degrees of freedom) 
arise precisely from 
the $\airy$ of
those in the preceding Argyris space (which are dual to its $C^2$ degrees of freedom).

\begin{figure}[htb]
	\scalebox{0.9}{
		\centering
		\begin{tikzpicture}[scale = 3.0]
			\draw[line width = 0.5mm, color = OliveGreen] (0,0) -- (1, 0) -- (0.5, 0.866) -- cycle;
			\foreach \i/\j in {0/0, 1/0, 0.5/0.866}{
				\draw[color = OliveGreen, fill = OliveGreen] (\i, \j) circle (0.02);
				\draw[color = OliveGreen] (\i, \j) circle (0.05);
				\draw[color = OliveGreen] (\i, \j) circle (0.08);
			}
			\foreach \i/\j/\n/\t in {0.5/0.0/0.0/-1,
				0.75/0.433/0.866/0.5,
				0.25/0.433/-0.866/0.5}{
				\draw[line width = 0.4mm, -Latex, color = OliveGreen] (\i, \j) -- (\i + \n/4, \j + \t/4);
			}
			\path[draw, -{Latex}] (1, 0.6) -- node [above] {$\airy$} (1.8, 0.6);
			\begin{scope}[shift = {(1.8, 0)}]
				\draw[line width = 0.5mm, color = blue] (0,0) -- (1, 0) -- (0.5, 0.866) -- cycle;
				\foreach \i/\j in {0/0, 1/0, 0.5/0.866}{
					\foreach \dx/\dy in {0.02/0, -0.02/0, 0/0.03}{
						\draw[color = blue, fill = blue] (\i + \dx, \j + \dy) circle (0.02);
					}
				}
				\foreach \i/\j/\n/\t in {0.25/0.0/0.0/-1,
					0.5/0.0/0.0/-1,
					0.75/0.0/0.0/-1,
					0.875/0.2083/0.866/0.5,
					0.75/0.4167/0.866/0.5,
					0.625/0.625/0.866/0.5,
					0.125/0.2083/-0.866/0.5,
					0.25/0.4167/-0.866/0.5,
					0.375/0.625/-0.866/0.5
				}{
					\draw[line width = 0.4mm, -implies, double, double distance = 1.5mm, color = blue] (\i, \j) -- (\i + \n/4, \j + \t/4);
				}
				\foreach \dx/\dy in {0.02/0.0, -0.02/0.0, 0.0/0.04
					,0.06/0.0, -0.06/0.0, 0.02/0.08, 0.04/0.04, -0.04/0.04, -0.02/0.08}{
					\draw[color = blue, fill = white] (0.5 + \dx, 0.2887 + \dy) circle (0.02);
				}
				\path[draw, -{Latex}] (1, 0.6) -- node [above] {$\Div$} (1.8, 0.6);
		\end{scope}
		\begin{scope}[shift = {(3.4, 0)}]
				\draw[line width = 0.5mm, color = red] (0, 0) -- (1, 0) -- (0.5, 0.866) -- cycle;
				\foreach \dx in {0.02, -0.02}{
					\foreach \a/\b in {0.375/0.2165, 0.625/0.2165, 0.5/0.433}{
						\draw[color = red, fill = white] (\a + \dx, \b) circle (0.02);
					}
				}
				\draw[line width = 0.4mm, double, double distance = 1.5mm, color = white, -implies] (0.5, -0.02) -- (0.5, -0.25);
			\end{scope}
		\end{tikzpicture}
	}
	\caption{Complex containing the Hu--Zhang elements~\cite{Hu2014} at lowest order.}\label{fig:hu-zhang}
\end{figure}
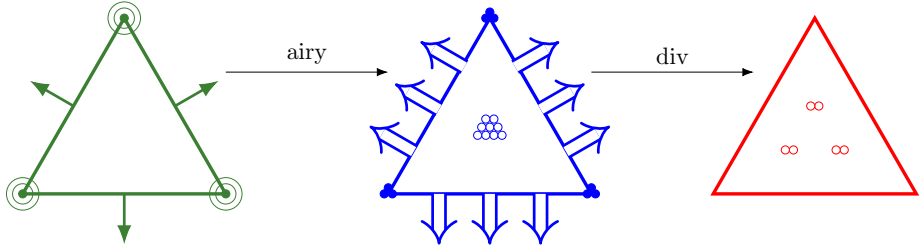

\subsection{Global degrees of freedom for the stress space}

\begin{lemma}\label{lem:hdiv-global-dofs}
    \emph{(Unisolvency for the global tensor space.)}
    The space $\hdivspace^{p}$, $p \geq 3$, has dimension
    \begin{align}\label{eq:dim-hdiv}
        \dim \hdivspace^{p} = 3|\mathcal{V}| + 2(p - 1) |\mathcal{E}| + \frac{3}{2} p(p - 1) |\mathcal{T}|
    \end{align}
    and is unisolvent with respect to the following DOFs for $\sigma \in \hdivspace^{p}$:
    \begin{enumerate}[label = (\roman*)]
        \item 3 values per vertex $\bx\in \mathcal{V}$: $\sigma(\bx)$,
        \item $2(p - 1)$ values per edge $\gamma \in \mathcal{E}$:
        \begin{equation}
            \int_{\gamma} (\sigma \bn) \cdot r~\ds \qquad \forall~r \in \mathcal{P}_{p - 2}(\gamma; \mathbb{V}),
        \end{equation}
        \item $\frac{3}{2} p(p - 1)$ values per element $K \in \mathcal{T}$:
        \begin{equation*}
            \int_{K} \sigma:\tau~\dx \qquad \forall~\tau \in \hdivspace_{0}^p(K; \mathbb{S}).
        \end{equation*}
    \end{enumerate}
\end{lemma}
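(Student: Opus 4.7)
The plan is to lift the local unisolvency of \cref{lem:hdiv-local-dofs} to the global space using the conformity requirements built into $\hdivspace^{p}$, namely $H(\Div;\mathbb{S})$-conformity across interior edges and $C^0$ continuity at mesh vertices. First I would verify that the three classes of listed DOFs are well-defined on $\hdivspace^{p}$: the vertex evaluations make sense by the explicit $C^0$-at-vertices requirement in the definition of $\hdivspace^{p}$, and the edge moments make sense because membership in $H(\Div;\mathbb{S})$ forces the piecewise polynomial normal trace $\sigma \bn$ to be single-valued across any interior edge (from either side it is a polynomial in $\mathcal{P}_p(\gamma; \mathbb{V})$, and these polynomials must agree as distributions, hence pointwise). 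The interior moments are trivially well-defined.

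For injectivity of the DOF map, I would assume $\sigma \in \hdivspace^{p}$ annihilates every listed global DOF. On each $K \in \mathcal{T}$, the restriction $\sigma|_K \in \mathcal{P}_p(K;\mathbb{S})$ then annihilates every local DOF from \cref{lem:hdiv-local-dofs}, so \cref{lem:hdiv-local-dofs} immediately gives $\sigma|_K = 0$; since this holds for all $K$, $\sigma \equiv 0$.

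For surjectivity, which combined with injectivity yields the dimension formula, I would, given arbitrary prescribed values of the global DOFs, define $\sigma|_K \in \mathcal{P}_p(K;\mathbb{S})$ on each element using the local unisolvency of \cref{lem:hdiv-local-dofs}, and then check that the resulting piecewise polynomial lies in $\hdivspace^{p}$. The $C^0$ requirement at vertices is automatic since all elements meeting at a vertex share the same vertex DOF value. The crucial and only nontrivial step is verifying $H(\Div;\mathbb{S})$-conformity: on an interior edge $\gamma$ shared by elements $K_1$ and $K_2$, each one-sided trace $\sigma|_{K_i}\bn|_\gamma \in \mathcal{P}_p(\gamma;\mathbb{V})$ lies in a space of dimension $2(p+1)$ and is uniquely determined by its two endpoint values in $\mathbb{V}$ (coming from the shared vertex evaluations via $\bx \mapsto \sigma(\bx) \bn$, contributing $4$ scalars) together with the $2(p-1)$ shared edge moments against $\mathcal{P}_{p-2}(\gamma;\mathbb{V})$. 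Since $4 + 2(p-1) = 2(p+1)$ and all this data is common to both sides of $\gamma$, the two one-sided normal traces must coincide on $\gamma$, so $\sigma \in H(\Div;\mathbb{S})$. Summing $3$ DOFs per vertex, $2(p-1)$ per edge, and $\frac{3}{2}p(p-1)$ per element over the mesh yields the claimed dimension formula~\cref{eq:dim-hdiv}.

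The main technical point — essentially the only nontrivial step — is the cardinality match on each interior edge between the shared vertex-plus-edge data and $\dim \mathcal{P}_p(\gamma;\mathbb{V})$; once this is observed, global $H(\Div;\mathbb{S})$-conformity is forced by the shared DOFs and everything else reduces to applying \cref{lem:hdiv-local-dofs} element-by-element.
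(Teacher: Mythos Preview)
Your proposal is correct and follows essentially the same route as the paper: injectivity by restricting to each element and invoking \cref{lem:hdiv-local-dofs}, then surjectivity by assigning local DOFs elementwise and checking that the resulting piecewise polynomial has matching vertex values and normal traces across interior edges. Your version is simply more explicit than the paper's, in particular the dimension count $4 + 2(p-1) = 2(p+1)$ on each edge to force agreement of the one-sided normal traces, which the paper leaves implicit.
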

\begin{proof}
    Suppose all global DOFs vanish on some $\sigma\in\hdivspace^p$. Then all local DOFs on each element $K\in\mathcal{T}$ vanish, so $\sigma\vert_K = 0$ on each $K$ by~\Cref{lem:hdiv-local-dofs}, and so $\sigma = 0$ in $\Omega$.
    Thus
    \begin{equation}\label{eq:global-unisolvency-ineq}
        \dim\hdivspace^p \leq 3|\mathcal{V}| + 2(p - 1) |\mathcal{E}| + \frac{3}{2} p(p - 1) |\mathcal{T}|.
    \end{equation}
    Now assume that values for each of the global DOFs are given. For each $K\in\mathcal{T}$, define $\sigma_K\in\mathcal{P}_p(K; \mathbb{S})$ by assigning the given values to the local DOFs. Then $\sigma$ defined by $\sigma\vert_K = \sigma_K$ is continuous at all mesh vertices, and has continuous normal component across cell edges, so $\sigma\in\hdivspace^p$.
    Thus~\cref{eq:global-unisolvency-ineq} holds as equality.
\end{proof}

We will also use an alternative set of degrees of freedom which characterize the space $\hdivspace_{\Gamma}^{p}$ in terms of moments of functions in $\htwospace_{\Gamma}^{p+2}$ and $\ltwospace_{\Gamma}^{p-1}$ and in terms of average fluxes around holes. Then we have the following result:
\begin{lemma}\label{lem:sigma-projection-dofs}
	The space $\hdivspace_{\Gamma}^{p}$, $p \geq 3$, has dimension
	\begin{equation*}
		\dim \hdivspace_{\Gamma}^p = \dim \htwospace_{\Gamma}^{p+2} + \dim \ltwospace_{\Gamma}^{p-1} + 3|\mathfrak{I}^*| - \dim \mathcal{P}_{1, \Gamma}(\Omega)
	\end{equation*}
	and is unisolvent with respect to the following DOFs for $\sigma \in \hdivspace_{\Gamma}^{p}$:
	\begin{enumerate}[label = (\roman*)]
		\item $\dim \airy \htwospace_{\Gamma}^{p+2} = \dim \htwospace_{\Gamma}^{p+2} - \dim \mathcal{P}_{1, \Gamma}(\Omega) $ values: $(\sigma, \airy q)$ for all $q \in \htwospace_{\Gamma}^{p+2}$,
		\item $\dim \ltwospace_{\Gamma}^{p-1}$ values: $(\dive \sigma, v)$ for all $v \in \ltwospace_{\Gamma}^{p - 1}$,
		\item 3 values for each element in $\mathfrak{I}^*$: $\langle \sigma \bn, r \rangle_{\partial \Omega_m}$ for all $r \in \rigidbodies$ and $m \in \mathfrak{I}^*$.
	\end{enumerate}
\end{lemma}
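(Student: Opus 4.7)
The approach is to deduce both the dimension formula and the unisolvency from the discrete Hodge decomposition of $\hdivspace_{\Gamma}^p$ induced by the complex~\cref{eq:exact-sequence-fem}, combined with~\cref{lem:exact-sequence-continuous-disp}. First I would establish unisolvency by assuming that all three DOF families vanish on some $\sigma \in \hdivspace_{\Gamma}^p$. Testing family (ii) against $v = \dive \sigma$ --- admissible because~\cref{eq:image-of-div-orthog} shows $\dive \sigma \perp \rigidbodies$ in the pure-Dirichlet case, so that $\dive \sigma \in \ltwospace_{\Gamma}^{p-1}$ in all cases --- forces $\dive \sigma \equiv 0$.

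Since $\sigma$ is discretely divergence-free, the $hp$-stable discrete Hodge decomposition (one of the paper's main results, to be proved in~\cref{sec:cochain} via the BGG Poincaré operators and the bounded right inverse of the divergence) splits it as $\sigma = \airy q + \phi$ with $q \in \htwospace_{\Gamma}^{p+2}$ and $\phi$ in the space $\mathfrak{H}_{\Gamma}^p$ of discrete harmonic forms. Testing family (i) against this particular $q$, and using that $\phi$ is $L^2$-orthogonal to $\airy \htwospace_{\Gamma}^{p+2}$ by definition of $\mathfrak{H}_{\Gamma}^p$ (since $\dive \phi = 0$, the $(\cdot,\cdot)_{\dive,\Omega}$ pairing reduces to $(\cdot,\cdot)_{0,\Omega}$), gives $\|\airy q\|_{0,\Omega}^2 = 0$, whence $\sigma = \phi$ is a discrete harmonic form. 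Invoking the cohomological equivalence of the discrete and continuous complexes supplied by the bounded commuting projections of~\cref{sec:cochain}, $\mathfrak{H}_{\Gamma}^p \subseteq \mathfrak{H}_{\Gamma}(\Omega)$ with $\dim \mathfrak{H}_{\Gamma}^p = 3|\mathfrak{I}^*|$. The proof of~\cref{lem:exact-sequence-continuous-disp} exhibits explicit harmonic forms dual to the boundary-flux functionals $\phi \mapsto \langle \phi\bn, r\rangle_{\partial\Omega_m}$, so vanishing of family (iii) forces $\phi = 0$, completing the injectivity argument.

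The dimension formula then follows from the same decomposition:
\begin{equation*}
    \dim \hdivspace_{\Gamma}^p = \dim \airy \htwospace_{\Gamma}^{p+2} + \dim \mathfrak{H}_{\Gamma}^p + \dim \dive \hdivspace_{\Gamma}^p,
\end{equation*}
with the three summands equal to $\dim \htwospace_{\Gamma}^{p+2} - \dim \mathcal{P}_{1,\Gamma}(\Omega)$ (since $\mathcal{P}_{1,\Gamma}(\Omega) \subset \htwospace_{\Gamma}^{p+2}$ is the kernel of $\airy$ by~\cref{lem:exact-sequence-continuous-disp}), $3|\mathfrak{I}^*|$, and $\dim \ltwospace_{\Gamma}^{p-1}$ (by surjectivity of the discrete divergence), respectively. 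The principal obstacle is the twin identification $\dim \mathfrak{H}_{\Gamma}^p = 3|\mathfrak{I}^*|$ together with injectivity of the boundary-flux functionals on $\mathfrak{H}_{\Gamma}^p$; both reduce to the standard FEEC principle that bounded commuting cochain projections preserve cohomology, which is precisely what the constructions of~\cref{sec:cochain} supply. Modulo those ingredients, the argument is essentially an exercise in the Hodge decomposition.
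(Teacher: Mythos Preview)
Your argument is circular. The very ingredients you invoke from \cref{sec:cochain}---the bounded commuting projection $\Pi_{\hdivspace}^p$ and the identification $\dim\mathfrak{H}_{\Gamma}^p = 3|\mathfrak{I}^*|$---are proved in the paper \emph{using} \cref{lem:sigma-projection-dofs}. The operator $\Pi_{\hdivspace}^p$ in \cref{eq:pi-sigma-def} is \emph{defined} via the DOFs (i)--(iii), so its well-definedness presupposes their unisolvency; and \cref{lem:dim-harmonic-discrete} is deduced directly from \cref{lem:sigma-projection-dofs}. You therefore cannot appeal to either to establish the lemma. In addition, the inclusion $\mathfrak{H}_{\Gamma}^p \subseteq \mathfrak{H}_{\Gamma}(\Omega)$ is false: a discrete harmonic form is only orthogonal to $\airy\htwospace_{\Gamma}^{p+2}$, not to all of $\airy H^2_{\Gamma}(\Omega)$, so the continuous duality argument from \cref{lem:exact-sequence-continuous-disp} does not transfer. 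Bounded commuting projections give equal \emph{dimensions} of cohomology, not containment of harmonic representatives.

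The paper avoids this by going through \cref{lem:projection-matching}, which depends only on the divergence-inversion results of \cref{sec:poincare-element,sec:invert-div-mesh} (specifically \cref{thm:invert-div-fem,thm:invert-div-fem-gen}) and not on any cohomological machinery. Existence is handled constructively: invert the divergence to match (ii), use \cref{thm:invert-div-fem-gen} to adjust fluxes for (iii), and add $\airy q$ for (i). Uniqueness is the substantive step: if all DOFs vanish, one shows $\dive\sigma\equiv0$ and $\langle\sigma\bn,r\rangle_{\partial\Omega_m}=0$ for every $m$, then invokes \cite[Theorem~2]{Geymonat2000} to write $\sigma=\airy\tilde q$ for some $\tilde q\in H^2(\Omega)$, argues from the boundary conditions that (after subtracting an affine function) $\tilde q\in\htwospace_{\Gamma}^{p+2}$, and finally uses the vanishing of (i) to conclude $\sigma=0$. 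This direct argument is what \emph{produces} the Hodge decomposition and the commuting projections, rather than consuming them.
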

In other words, for each connected component of the boundary of $\Omega$ which does not lie entirely in $\Gamma_D$ (apart from one), there are 3 
additional linearly independent
divergence-free functions in $\hdivspace_\Gamma^p$ which 
do not lie in the space $\airy \htwospace_{\Gamma}^{p+2}$.
\Cref{lem:sigma-projection-dofs} will be proved in~\cref{sec:proof-projection-dofs}.

\section{Summary of main results}\label{sec:main-results}

In this section, we summarize the three main results: (i) a right inverse of the divergence operator that is bounded uniformly in $h$ and $p$; (ii) $hp$-bounded commuting cochain projections; and (iii) $hp$-stable Hodge decompositions. We then show that these results apply to the Arnold--Winther element~\cite{Arnold2002}.

\subsection{Uniformly stable inversion of the divergence operator and consequences}

The first result shows that the divergence operator from $\hdivspace_{\Gamma}^{p}$ to $\ltwospace_{\Gamma}^{p - 1}$ admits a right inverse which is bounded uniformly in $h$ and $p$.
\begin{theorem}\label{thm:invert-div-fem}
    Let $p \geq 3$. For every $u \in \ltwospace_{\Gamma}^{p-1}$, there exists $\sigma \in \hdivspace_{\Gamma}^{p}$ such that
    \begin{equation}\label{eq:global-div-interp}
        \dive \sigma = u \quad \text{and} \quad \|\sigma\|_{\dive, \Omega} \leq C \|u\|_{0, \Omega},
    \end{equation}
    where $C$ is independent of 
    $h$ and $p$. Consequently, the pair $\hdivspace_{\Gamma}^{p} \times \ltwospace_{\Gamma}^{p-1}$ is inf-sup stable uniformly in $h$ and $p$:
    \begin{equation}\label{eq:inf-sup-def}
        \beta(h, p) := 
        \inf_{u\in\ltwospace_{\Gamma}^{p-1}} \sup_{\sigma\in\hdivspace_{\Gamma}^{p}} \frac{ (\dive \sigma, u) }{ \|\sigma\|_{\dive, \Omega} \|u\|_{0, \Omega}  } \geq C^{-1} > 0.
    \end{equation}
\end{theorem}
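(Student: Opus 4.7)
The inf-sup bound~\cref{eq:inf-sup-def} follows immediately from the bounded right inverse~\cref{eq:global-div-interp}: given $u\in\ltwospace_{\Gamma}^{p-1}$ and the corresponding $\sigma\in\hdivspace_{\Gamma}^{p}$, one has $(\dive\sigma,u) = \|u\|_{0,\Omega}^{2}$ and $\|\sigma\|_{\dive,\Omega}^{2} \leq (C^{2}+1)\|u\|_{0,\Omega}^{2}$, so the ratio in~\cref{eq:inf-sup-def} is bounded below by $(C^{2}+1)^{-1/2}$. The task therefore reduces to constructing the bounded right inverse, and the overall strategy is to combine a continuous right inverse with a local polynomial-preserving correction built via the BGG framework. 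First, I invoke~\cref{thm:h1-inversion-div-bc} to produce a smooth lift $\tilde\sigma\in H^{1}(\Omega;\mathbb{S})\cap\hdivgamma{\Omega}{\mathbb{S}}$ with $\dive\tilde\sigma = u$ and $\|\tilde\sigma\|_{1,\Omega} \leq C\|u\|_{0,\Omega}$. Since $u$ is already piecewise polynomial of degree $p-1$, it suffices to add to $\tilde\sigma$ a piecewise polynomial correction that places the result in $\hdivspace_{\Gamma}^{p}$ while preserving the divergence.

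I plan to realise this in two stages. Stage~(a) constructs a global $\sigma^{\mathrm{bd}}\in\hdivspace_{\Gamma}^{p}$ whose vertex values and edge moments agree with those of a suitable interpolant of $\tilde\sigma$, using the $hp$-bounded polynomial extension operators of~\cite{Parker2023} to lift edge traces into each element; the finite-dimensional gap between $\airy\htwospace_{\Gamma}^{p+2}$ and the divergence-free part of $\hdivspace_{\Gamma}^{p}$, counted by $\mathfrak{I}^{*}$ in~\cref{lem:sigma-projection-dofs}, will be absorbed here. Stage~(b), carried out in~\cref{sec:invert-div-mesh}, applies on each $K\in\mathcal{T}$ an element-local right inverse $\mathcal{R}_{K}:\mathcal{P}_{p-1}(K;\mathbb{V})\to\hdivspace_{0}^{p}(K;\mathbb{S})$ to the residual $(u-\dive\sigma^{\mathrm{bd}})|_{K}$. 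Because $\mathcal{R}_{K}$ lands in the normally vanishing subspace described in~\cref{lem:dim-interior-stress}, stage~(b) does not disturb the traces fixed in stage~(a); summing the resulting bounds then yields~\cref{eq:global-div-interp}.

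The heart of the proof is the element-level construction of $\mathcal{R}_{K}$ in~\cref{sec:poincare-element}: a polynomial-preserving operator satisfying $\dive\mathcal{R}_{K}v = v$ and $\|\mathcal{R}_{K}v\|_{0,K} \leq C\|v\|_{0,K}$ with $C$ independent of $p$ and $h_{K}$. I plan to derive $\mathcal{R}_{K}$ from polynomial-preserving Poincar\'e operators on the de Rham complex via the BGG recipe of~\cite{Arnold2021}, coupling two copies of the de Rham complex through the $\mskw$ map so that symmetry is built in algebraically rather than by projection. This is the main obstacle: symmetry, polynomial preservation, and $p$-uniform $L^{2}$ bounds pull in different directions, since naive symmetrisation $\tfrac12(\tau+\tau^{\top})$ breaks the divergence identity, while direct symmetric constructions typically cost polynomial degree. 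A secondary difficulty in stage~(a) is coordinating the vertex, edge, and mean-value conditions implicit in \cref{lem:hdiv-local-dofs,lem:sigma-projection-dofs} so that the two stages are independently $hp$-bounded and genuinely decouple.
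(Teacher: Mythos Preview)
The skeleton of your argument --- continuous $H^1$ lift, a global interpolant $\sigma^{\mathrm{bd}}$, then element-local correction via BGG-built Poincar\'e operators --- matches the paper's route. However, there is a genuine gap in how stages~(a) and~(b) interface. The element-local operator $\mathcal{R}_K$ cannot map all of $\mathcal{P}_{p-1}(K;\mathbb{V})$ into $\hdivspace_0^p(K;\mathbb{S})$: the divergence theorem forces $\int_K(\dive\sigma)\cdot r\,\dx=0$ for every $r\in\rigidbodies$ whenever $\sigma\bn|_{\partial K}=0$, so $\mathcal{R}_K$ is only defined on $\mathcal{P}_{p-1}(K;\mathbb{V})\cap\ltwomodrm(K;\mathbb{V})$ (cf.\ \cref{cor:single-div-inversion}). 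Your stage~(a) must therefore be engineered so that $u-\dive\sigma^{\mathrm{bd}}$ is $\rigidbodies$-orthogonal on every element, and ``matching vertex values and edge moments of a suitable interpolant of $\tilde\sigma$'' does not obviously deliver this --- it targets trace data of $\tilde\sigma$, not the elementwise rigid-body moments of $\dive\tilde\sigma$. The paper's stage~(a) (\cref{lem:coarse-div-interp}) is built around exactly this constraint: it constructs a \emph{low-order} $\tau\in\hdivspace_\Gamma^3$ via the Scott--Zhang interpolant of $\tilde\sigma$ plus a degree-$3$ edge-moment correction, whose defining property is precisely $\int_K(\dive\tau)\cdot r\,\dx=\int_K u\cdot r\,\dx$ for all $r\in\rigidbodies$ and $K\in\mathcal{T}$. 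Fixing the degree at~$3$ makes $p$-uniformity of this stage automatic, with no need for $hp$ extension operators here.

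Two smaller points. The polynomial extension operators of~\cite{Parker2023} do appear in the paper, but inside the element-level construction (\cref{lem:airy-correction-operator}, to lift boundary data to a function whose $\airy$ has prescribed normal trace), not in stage~(a) as you propose. And the harmonic-form count $|\mathfrak{I}^*|$ is irrelevant to inverting $\dive$: harmonic forms are divergence-free and do not obstruct surjectivity, so nothing needs to be ``absorbed'' in stage~(a) for \cref{thm:invert-div-fem} itself (it enters only in the generalized version \cref{thm:invert-div-fem-gen} with prescribed fluxes, and in \cref{lem:sigma-projection-dofs}).
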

\Cref{thm:invert-div-fem} is the discrete analog of~\cref{thm:h1-inversion-div-bc}; its proof appears at the end of~\cref{sec:invert-div-mesh}. 
One application of the inf-sup condition~\cref{eq:inf-sup-def} is that the discretization of the Hellinger--Reissner formulation~\cref{eq:hellinger-reissner-fem} with $\hdivspace^p_{\Gamma}$ and $\ltwospace^{p - 1}_{\Gamma}$ chosen as in~\cref{eq:fem-spaces-bcs} -- 
as well as any other finite element formulation requiring the same inf-sup condition on $\hdivspace^p_\Gamma\times\ltwospace^{p - 1}_\Gamma$, such as the linear Reissner--Mindlin plate considered in~\cite{Sky2023} --
is uniformly stable in $h$ and $p$. The right inverse of the divergence operator in~\cref{thm:invert-div-fem} also plays a crucial role in the remaining results of this section.

\subsection{$hp$-bounded commuting cochain projections}\label{sec:commuting-projections}

In this section, we define bounded projection operators so that the following diagram commutes:
\begin{equation}\label{eq:commuting-diagram}
    \begin{tikzcd}[row sep=large]
        0 \arrow{r}{} & \mathcal{P}_{1, \Gamma}(\Omega) \arrow{r}{\iota} \arrow{d}{I} &  H_{\Gamma}^{2}(\Omega) \arrow{r}{\airy} \arrow{d}{ \Pi_{\htwospace}^{p+2} }  &  \hdivgamma{\Omega}{\mathbb{S}} \arrow{r}{\dive} \arrow{d}{ \Pi_{\hdivspace}^p } & L_{\Gamma}^{2}(\Omega; \mathbb{V}) \arrow{r} \arrow{d}{ \Pi_{\ltwospace}^{p-1} } & 0 \\
        0 \arrow{r} & \mathcal{P}_{1,\Gamma}(\Omega) \arrow{r}{\iota} & \htwospace_{\Gamma}^{p+2} \arrow{r}{\airy} & \hdivspace_{\Gamma}^{p} \arrow{r}{\dive}  & \ltwospace_{\Gamma}^{p-1} \arrow{r} & 0.
    \end{tikzcd}
\end{equation}
As summarized 
in~\cite[Chapter 5]{Arnold2018}
and~\cite[Section 3]{Arnold2010}, 
the existence of these operators allows one to estimate the gap between continuous and discrete harmonic forms and to obtain discrete Poincar\'{e} inequalities, inf-sup conditions, and quasi-optimal error estimates for mixed problems. Additionally, the operators are also useful in discrete compactness results~\cite{He2019}. 
In the context of $h$-version finite elements, these cochain projection operators are typically locally defined and used to construct a right inverse of the divergence operator with the properties stated in~\cref{thm:invert-div-fem}. Here, we do the reverse: we use the right inverse of the divergence operator in~\cref{thm:invert-div-fem} to construct global cochain projections.

The operators $\Pi_Q^p : H^2_{\Gamma}(\Omega) \to \htwospace_{\Gamma}^{p}$, $p \geq 1$, and $\Pi_{\ltwospace}^{p} : L^2_{\Gamma}(\Omega) \to \ltwospace_{\Gamma}^{p}$, $p \geq 0$, are taken to be $H^2$- and $L^2$-projections:
\begin{align}\label{eq:piq-def}
    \left\{ 
    \begin{aligned}
        (\airy \Pi_Q^p q , \airy r) &= (\airy q, \airy r) \qquad & &\forall~r \in \htwospace_{\Gamma}^{p}, \\
        (\Pi_Q^p q, \ell) &= (q, \ell) \qquad & &\forall~\ell \in \mathcal{P}_{1, \Gamma}(\Omega),
    \end{aligned}
    \right.
    \qquad \forall~q \in H^2_{\Gamma}(\Omega),
\end{align}
and
\begin{equation}\label{eq:piv-def}
    (\Pi_V^{p} v, u) = (v, u) \qquad \forall~u \in \ltwospace_{\Gamma}^{p}, \ \forall~v \in  L^2_{\Gamma}(\Omega).
\end{equation}
We define the operator $\Pi_{\hdivspace}^p : \hdivgamma{\Omega}{\mathbb{S}} \to \hdivspace_{\Gamma}^p$, $p \geq 3$, via the degrees of freedom in~\cref{lem:sigma-projection-dofs}:
\begin{align}\label{eq:pi-sigma-def}
    \left\{ 
    \begin{aligned}
    (\Pi_{\hdivspace}^p \sigma, \airy q) &= (\sigma, \airy q) \qquad & &\forall~q \in \htwospace_{\Gamma}^{p+2}, \\
    (\dive \Pi_{\hdivspace}^p \sigma , v) &= (\dive \sigma, v) \qquad & &\forall~v \in \ltwospace_{\Gamma}^{p-1}, \\
    \langle (\Pi_{\hdivspace}^{p} \sigma) \bn, r \rangle_{\partial \Omega_m} &= \langle \sigma \bn, r \rangle_{\partial \Omega_m}, \qquad & &\forall~r \in \rigidbodies, \ \forall~m \in \mathfrak{I}^*,
    \end{aligned}
     \right. \quad \forall~\sigma \in \hdivgamma{\Omega}{\mathbb{S}}.
\end{align}
The operators are clearly well-defined linear operators. The following result shows that they are bounded uniformly in $h$ and $p$ and that the diagram~\cref{eq:commuting-diagram} commutes:
\begin{theorem}\label{thm:commuting-diagram}
    Let $p \geq 3$. The operators $\Pi_{\htwospace}^{p+2}$, $\Pi_{\hdivspace}^{p}$, and $\Pi_{\ltwospace}^{p-1}$ are projections and are bounded uniformly in $h$ and $p$ in the sense that
    \begin{subequations}
        \begin{alignat}{2}
            \label{eq:pi-q-bounded}
            \| \Pi_{\htwospace}^{p+2} q \|_{2, \Omega} &\leq C \| q\|_{2, \Omega} \qquad & &\forall~q \in H^2_{\Gamma}(\Omega), \\
            \label{eq:pi-sigma-bounded}
            \| \Pi_{\hdivspace}^{p} \sigma\|_{\dive, \Omega} &\leq C \|\sigma\|_{\dive, \Omega} \qquad & &\forall~\sigma \in \hdivgamma{\Omega}{\mathbb{S}}, \\
            \label{eq:pi-v-bounded}
            \| \Pi_{\ltwospace}^{p-1} v \|_{0, \Omega} &\leq \|v\|_{0, \Omega} \qquad & &\forall~v \in L^2_{\Gamma}(\Omega; \mathbb{V}),
        \end{alignat}
    \end{subequations}
    where $C$ is independent of $h$ and $p$. Moreover, the diagram~\cref{eq:commuting-diagram} commutes, i.e. 
    \begin{subequations}
        \begin{alignat}{2}
            \label{eq:commute-airy}
            \Pi_{\hdivspace}^{p} \airy q &= \airy \Pi_{\htwospace}^{p+2} q \qquad & &\forall~q \in H^2_{\Gamma}(\Omega), \\
            \label{eq:commute-div}
            \Pi_{\ltwospace}^{p-1} \dive \sigma &= \dive \Pi_{\hdivspace}^{p} \sigma \qquad & &\forall~\sigma \in \hdivgamma{\Omega}{\mathbb{S}}.
        \end{alignat}
    \end{subequations}
\end{theorem}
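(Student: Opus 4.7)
The plan is to verify the projection, commuting, and boundedness properties in turn, using Lemma~\ref{lem:sigma-projection-dofs} (unisolvency via DOFs) and Theorem~\ref{thm:invert-div-fem} (uniformly bounded right inverse of $\dive$) as the main tools.

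For the projection claim, $\Pi_{\ltwospace}^{p-1}$ is trivially a projection as an $L^2$-projection onto a subspace. The map $\Pi_{\htwospace}^{p+2}$ is well-defined because $|\cdot|_{2,\Omega}$ is a norm on $\htwospace_{\Gamma}^{p+2}/\mathcal{P}_{1,\Gamma}(\Omega)$ and $\mathcal{P}_{1,\Gamma}(\Omega) \subseteq \htwospace_{\Gamma}^{p+2}$; if $q \in \htwospace_{\Gamma}^{p+2}$ then $q$ itself satisfies~\cref{eq:piq-def}, so $\Pi_{\htwospace}^{p+2} q = q$. The map $\Pi_{\hdivspace}^{p}$ is well-defined by Lemma~\ref{lem:sigma-projection-dofs}, and any $\sigma \in \hdivspace_{\Gamma}^{p}$ already satisfies~\cref{eq:pi-sigma-def}, so $\Pi_{\hdivspace}^{p} \sigma = \sigma$ by uniqueness.

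For the commutativity~\cref{eq:commute-airy}, I verify directly that the candidate $\airy \Pi_{\htwospace}^{p+2} q \in \hdivspace_{\Gamma}^{p}$ satisfies the three DOFs of $\Pi_{\hdivspace}^{p} \airy q$ in~\cref{eq:pi-sigma-def}: DOF~1 matches by the definition~\cref{eq:piq-def} of $\Pi_{\htwospace}^{p+2}$; DOF~2 is trivial since $\dive \airy \equiv 0$; for DOF~3, both sides vanish because for any $\tilde q \in H^2(\Omega)$, any closed boundary component $\partial\Omega_m$, and any $r \in \rigidbodies$, integrating by parts around the closed curve using the identity $(\airy \tilde q)\bn = \partial_{\bt}(\nabla \tilde q)^\perp$ established after~\cref{eq:h2gamma-bcs-alt}, together with $\partial_{\bt} r = b\, \bt^\perp$ for the rotational coefficient $b$ of $r$, reduces $\langle (\airy \tilde q)\bn, r\rangle_{\partial\Omega_m}$ to $-b\int_{\partial\Omega_m} \partial_{\bt}\tilde q\, \ds = 0$ by single-valuedness of $\tilde q$ on the closed curve. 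For~\cref{eq:commute-div}, I first check that $\dive \Pi_{\hdivspace}^{p} \sigma \in \ltwospace_{\Gamma}^{p-1}$ (using the $\Gamma_D$ boundary condition of $\Pi_{\hdivspace}^{p}\sigma$ and $\varepsilon(\rigidbodies) = 0$), after which the second equation of~\cref{eq:pi-sigma-def} immediately identifies $\dive \Pi_{\hdivspace}^{p} \sigma$ with $\Pi_{\ltwospace}^{p-1} \dive \sigma$.

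The bound~\cref{eq:pi-v-bounded} is trivial, and~\cref{eq:pi-q-bounded} follows by testing~\cref{eq:piq-def} with $r = \Pi_{\htwospace}^{p+2} q$ combined with a Poincar\'e--Friedrichs inequality on $H^2_\Gamma(\Omega)$ controlling the full $H^2$-norm by the $H^2$-seminorm plus the $L^2$-projection onto $\mathcal{P}_{1,\Gamma}(\Omega)$. The bound~\cref{eq:pi-sigma-bounded} on $\Pi_{\hdivspace}^{p}$ is the main obstacle. First, $\|\dive \Pi_{\hdivspace}^{p} \sigma\|_{0,\Omega} \leq \|\dive\sigma\|_{0,\Omega}$ follows from~\cref{eq:commute-div} and~\cref{eq:pi-v-bounded}. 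Next, apply Theorem~\ref{thm:invert-div-fem} to obtain $\sigma_V \in \hdivspace_{\Gamma}^{p}$ with $\dive \sigma_V = \Pi_{\ltwospace}^{p-1} \dive \sigma$ and $\|\sigma_V\|_{\dive,\Omega} \leq C \|\sigma\|_{\dive,\Omega}$. The difference $\Pi_{\hdivspace}^{p} \sigma - \sigma_V$ is divergence-free in $\hdivspace_{\Gamma}^{p}$ and decomposes $L^2$-orthogonally as $\airy w + \phi$ with $w \in \htwospace_{\Gamma}^{p+2}$ and $\phi$ in the discrete harmonic space $\mathfrak{H}^{p}_{\Gamma}$ (defined analogously to~\cref{eq:harmonic-forms-def}); testing DOF~1 of~\cref{eq:pi-sigma-def} with $r = w$ and using the Hodge orthogonality yields $\|\airy w\|_{0,\Omega} \leq \|\sigma - \sigma_V\|_{0,\Omega} \leq C\|\sigma\|_{\dive,\Omega}$, while DOF~3 bounds each of the $3|\mathfrak{I}^*|$ independent boundary fluxes of $\phi$ by $O(\|\sigma\|_{\dive,\Omega})$. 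The final and most delicate step is establishing $hp$-\emph{uniform} norm equivalence on $\mathfrak{H}^{p}_{\Gamma}$, whose dimension equals $3|\mathfrak{I}^*|$ by Lemma~\ref{lem:sigma-projection-dofs} and Theorem~\ref{thm:invert-div-fem}; this can be achieved by identifying each discrete harmonic form with the continuous harmonic form of Lemma~\ref{lem:exact-sequence-continuous-disp} carrying the same boundary fluxes and controlling the discrete-continuous difference using the previously established bounds.
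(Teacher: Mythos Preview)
Your treatment of the projection property and the commutativity relations is correct and matches the paper's argument closely; in particular your verification that $\langle(\airy \tilde q)\bn,r\rangle_{\partial\Omega_m}=0$ for $r\in\rigidbodies$ is a clean way to handle DOF~3. The bounds on $\Pi_{\ltwospace}^{p-1}$ and $\Pi_{\htwospace}^{p+2}$ are also fine.

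The gap is in the very last step of bounding $\Pi_{\hdivspace}^{p}$. After decomposing $\Pi_{\hdivspace}^{p}\sigma-\sigma_V=\airy w+\phi$ with $\phi\in\mathfrak{H}_\Gamma^p$, you correctly bound $\|\airy w\|_{0,\Omega}$ and the boundary fluxes $\kappa_{m,l}:=\langle\phi\,\bn,r_l\rangle_{\partial\Omega_m}$. What remains is precisely the inequality $\|\phi\|_{0,\Omega}\le C|\kappa|$ with $C$ independent of $h$ and $p$, and your sketch does not deliver it. The natural move---let $\phi_c\in\mathfrak{H}_\Gamma(\Omega)$ be the continuous harmonic form with the same fluxes, then compare---stalls: $\phi_c\notin\hdivspace_\Gamma^p$ in general, and passing to $\Pi_{\hdivspace}^{p}\phi_c$ to produce a discrete competitor presupposes the very boundedness you are proving. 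Finite-dimensionality of $\mathfrak{H}_\Gamma^p$ gives \emph{some} constant, but not one that is $hp$-uniform.

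The paper avoids this by invoking \cref{lem:projection-matching}, whose stability estimate~\cref{eq:projection-matching-stability} is exactly the missing bound and whose proof manufactures the required competitor at fixed degree~$3$ via \cref{thm:invert-div-fem-gen} (the flux-prescribing version of the right inverse, not just \cref{thm:invert-div-fem}). With that lemma in hand, the entire boundedness argument collapses to one line: identify $\Pi_{\hdivspace}^p\sigma$ as the unique tensor of \cref{lem:projection-matching} with data $(q,v,\kappa)$ built from $\sigma$, each bounded by $\|\sigma\|_{\dive,\Omega}$. Your decomposition is morally the construction inside the proof of \cref{lem:projection-matching}; the piece you are missing is the degree-$3$ flux recovery that makes the harmonic part uniformly small.
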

The proof of~\cref{thm:commuting-diagram} appears in~\cref{sec:proof-commuting-diagram}. Constructions of local bounded commuting operators may be possible; see e.g.~\cite{Chaumont-Frelet2023, Ern2022} for vector-valued spaces.

\subsection{$hp$-stable Hodge decompositions}\label{sec:stable-decomp}

Hodge decompositions play a key role in the well-posedness of Hodge-Laplace problems (see e.g.~\cite[\S4.4.1]{Arnold2018}) and in the construction of preconditioners for these problems~\cite{Arnold2000, Hiptmair1998, Lee2007}. The complex~\cref{eq:exact-sequence-fem} immediately gives rise to a Hodge decomposition. Let 
\begin{align}
    \label{eq:div-free-def}
    N_{\Gamma}^{p} &:= \{ \rho \in \hdivspace_{\Gamma}^{p} : \dive \rho \equiv 0 \}, \\
    \label{eq:harmonic-forms-discrete-def}
    \mathfrak{H}_{\Gamma}^p &:= \{ \phi \in N_{\Gamma}^{p} : (\phi, \airy q)_{\div, \Omega} = 0 \ \forall~q \in \htwospace_{\Gamma}^{p+2} \}, \\
    \label{eq:div-free-orthog-def}
    (N_{\Gamma}^{p})^{\perp} &:= \{ \psi \in \hdivspace_{\Gamma}^{p} : (\psi, \rho)_{\div, \Omega} = 0 \ \forall~\rho \in N_{\Gamma}^{p} \},
\end{align}
where~\cref{eq:harmonic-forms-discrete-def} is the space of discrete harmonic forms. Then standard arguments (see e.g.~\cite[Theorem 4.5]{Arnold2018}) show that for every $\sigma \in \hdivspace_{\Gamma}^{p}$, there exists $\phi \in \mathfrak{H}_{\Gamma}^p$, $q \in \htwospace_{\Gamma}^{p+2}$, and $\tau \in (N_{\Gamma}^{p})^{\perp}$ satisfying
\begin{equation*}
    \sigma = \phi + \airy q + \tau \quad \text{and} \quad \|\phi\|_{\dive, \Omega}^2 + \|  \airy q \|_{\dive, \Omega}^2 + \| \tau \|_{\dive, \Omega}^2 = \| \sigma \|_{\dive, \Omega}^2.    
\end{equation*}
Note that the above decomposition is also orthogonal with respect to the $L^2(\Omega; \mathbb{S})$-inner product. We further have the following discrete analog of~\cref{lem:exact-sequence-continuous-disp}:
\begin{lemma}\label{lem:dim-harmonic-discrete}
    For $p \geq 3$, there holds
    \begin{align*}
        \dim \mathfrak{H}_{\Gamma}^p = \dim \mathfrak{H}_{\Gamma}(\Omega) = 3|\mathfrak{I}^*|,
    \end{align*}
    where $\mathfrak{H}_{\Gamma}(\Omega)$ is the space of continuous harmonic forms defined in~\cref{eq:harmonic-forms-def}. Consequently, if $|\mathfrak{I}^*| = 0$, then the discrete sequence~\cref{eq:exact-sequence-fem} is exact.
\end{lemma}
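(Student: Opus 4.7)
The plan is to exploit the alternative degrees of freedom for $\hdivspace_\Gamma^p$ from \cref{lem:sigma-projection-dofs}, which split any $\sigma \in \hdivspace_\Gamma^p$ into three independent groups: (i) moments $(\sigma, \airy q)$ for $q \in \htwospace_\Gamma^{p+2}$, (ii) moments $(\dive \sigma, v)$ for $v \in \ltwospace_\Gamma^{p-1}$, and (iii) the $3|\mathfrak{I}^*|$ rigid-body flux values $\langle \sigma\bn, r\rangle_{\partial \Omega_m}$ for $r \in \rigidbodies$ and $m \in \mathfrak{I}^*$. I would then exhibit the type-(iii) evaluation map as an isomorphism $\mathfrak{H}_\Gamma^p \to \mathbb{R}^{3|\mathfrak{I}^*|}$.

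For the upper bound $\dim \mathfrak{H}_\Gamma^p \leq 3|\mathfrak{I}^*|$, I would observe that every $\phi \in \mathfrak{H}_\Gamma^p$ automatically annihilates the DOFs of types (i) and (ii): since $\phi$ and every $\airy q$ are divergence-free, the $H(\dive)$-inner product $(\phi, \airy q)_{\dive, \Omega}$ reduces to $(\phi, \airy q)$, which vanishes by the very definition of $\mathfrak{H}_\Gamma^p$; and the type-(ii) moments vanish because $\dive \phi \equiv 0$ pointwise. The unisolvency of \cref{lem:sigma-projection-dofs} then shows that $\phi$ is determined by the remaining $3|\mathfrak{I}^*|$ type-(iii) values, giving injectivity of the evaluation map.

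For the reverse inequality I would prescribe arbitrary type-(iii) values, set the type-(i) and type-(ii) values to zero, and invoke unisolvency to produce a unique $\sigma \in \hdivspace_\Gamma^p$ realizing these DOFs, then check $\sigma \in \mathfrak{H}_\Gamma^p$. The only subtle step --- and the main obstacle --- is concluding $\dive \sigma \equiv 0$ from $(\dive \sigma, v) = 0$ for all $v \in \ltwospace_\Gamma^{p-1}$, since this requires $\dive \sigma$ itself to belong to the test space $\ltwospace_\Gamma^{p-1}$. In the mixed-boundary case $\ltwospace_\Gamma^{p-1} = \ltwospace^{p-1}$ already contains the piecewise polynomial $\dive \sigma$, so testing with $v = \dive \sigma$ closes the argument. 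In the pure-Dirichlet case I would additionally verify $(\dive \sigma, r) = -(\sigma, \varepsilon(r)) + \langle \sigma\bn, r\rangle_\Gamma = 0$ for $r \in \rigidbodies$ via integration by parts, using $\sigma\bn|_{\Gamma} = 0$ from $\sigma \in \hdivspace_\Gamma^p$ and $\varepsilon(r) \equiv 0$; this places $\dive \sigma$ in $\ltwospace_\Gamma^{p-1}$ and again testing with $v = \dive \sigma$ forces $\dive \sigma \equiv 0$. Once $\sigma \in N_\Gamma^p$, the type-(i) DOF vanishing combined with the $H(\dive)/L^2$ collapse of the inner product yields $(\sigma, \airy q)_{\dive, \Omega} = 0$ for all $q \in \htwospace_\Gamma^{p+2}$, and hence $\sigma \in \mathfrak{H}_\Gamma^p$.

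Finally, the exactness claim when $|\mathfrak{I}^*| = 0$ follows by combining three facts: surjectivity of $\dive : \hdivspace_\Gamma^p \to \ltwospace_\Gamma^{p-1}$ from \cref{thm:invert-div-fem}; the identification of the kernel of $\airy$ on $\htwospace_\Gamma^{p+2}$ with $\mathcal{P}_{1,\Gamma}(\Omega)$, which is inherited directly from \cref{lem:exact-sequence-continuous-disp} since $\htwospace_\Gamma^{p+2} \subset H^2_\Gamma(\Omega)$ and $\mathcal{P}_{1,\Gamma}(\Omega) \subset \htwospace_\Gamma^{p+2}$ for $p \geq 3$; and $\mathfrak{H}_\Gamma^p = \{0\}$, which by the Hodge decomposition of~\cref{sec:stable-decomp} forces $N_\Gamma^p = \airy \htwospace_\Gamma^{p+2}$ and hence exactness of~\cref{eq:exact-sequence-fem} at the stress level.
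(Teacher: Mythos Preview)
Your proposal is correct and follows essentially the same approach as the paper: the paper's one-line proof states that $\phi \in \mathfrak{H}_\Gamma^p$ if and only if the type-(i) and type-(ii) DOFs of \cref{lem:sigma-projection-dofs} vanish, and then invokes \cref{lem:exact-sequence-continuous-disp} for the continuous dimension. You have simply unpacked this biconditional in detail, including the subtle point---left implicit in the paper---that in the pure-Dirichlet case one must verify $\dive\sigma \perp \rigidbodies$ via integration by parts before concluding $\dive\sigma \equiv 0$.
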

\begin{proof}
	The result follows from~\Cref{lem:sigma-projection-dofs,lem:exact-sequence-continuous-disp} on noting that $\phi \in \mathfrak{H}_{\Gamma}^p$ if and only if the DOFs in (i) and (ii) in the statement of~\Cref{lem:sigma-projection-dofs} vanish. 
\end{proof}

Additionally, the following result, whose proof appears in~\cref{sec:proof-stable-decomp}, shows that, at the expense of orthogonality, one can fix the degree of the $\phi$ component to be the lowest order $(p = 3)$:
\begin{theorem}\label{thm:stable-decomposition-low-airy-div}
    Let $p \geq 3$. For every $\sigma \in \hdivspace_{\Gamma}^{p}$, there exists $\phi_3 \in \mathfrak{H}_{\Gamma}^3$, $q \in \htwospace_{\Gamma}^{p+2}$, and $\tau \in (N_{\Gamma}^{p})^{\perp}$ satisfying $\sigma = \phi_3 + \airy q + \tau$, 
    \begin{equation}
        \label{eq:low-order-div-free-high-order-decomp}
        \| \phi_3\|_{\dive, \Omega} + \|q\|_{2, \Omega} \leq C \|\sigma\|_{\dive, \Omega}, \quad \text{and} \quad
        \|\tau\|_{\dive, \Omega} \leq C \|\dive \sigma\|_{0, \Omega},
    \end{equation}
    where $C$ is independent of $h$ and $p$.
\end{theorem}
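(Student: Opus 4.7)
The plan is to construct the three components $\tau$, $\phi_3$, $q$ in sequence, combining \cref{thm:invert-div-fem} to produce $\tau$, the continuous Hodge decomposition of~\cref{eq:exact-sequence-continuous-gen} together with the commuting projection $\Pi_{\hdivspace}^3$ to produce $\phi_3$, and the DOF characterization in~\cref{lem:sigma-projection-dofs} to identify the remainder as an Airy potential.

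First I construct $\tau$: applying \cref{thm:invert-div-fem} to $u := \dive \sigma \in \ltwospace_{\Gamma}^{p-1}$ yields some $\tilde\tau \in \hdivspace_{\Gamma}^{p}$ with $\dive \tilde\tau = \dive\sigma$ and $\|\tilde\tau\|_{\dive,\Omega} \leq C\|\dive\sigma\|_{0,\Omega}$; taking $\tau$ to be the $\hdiv$-orthogonal projection of $\tilde\tau$ onto $(N_{\Gamma}^{p})^{\perp}$ preserves both the divergence and the bound, delivering the second estimate in~\cref{eq:low-order-div-free-high-order-decomp}. Then $\sigma_1 := \sigma - \tau$ is divergence-free in $\hdivgamma{\Omega}{\mathbb{S}}$ with $\|\sigma_1\|_{0,\Omega} \leq C\|\sigma\|_{\dive,\Omega}$.

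For the harmonic piece, I invoke the continuous Hodge decomposition of the complex~\cref{eq:exact-sequence-continuous-gen} (together with the continuous Poincar\'{e} inequality for $\airy$ that accompanies \cref{lem:exact-sequence-continuous-disp}) to obtain $\phi^c \in \mathfrak{H}_{\Gamma}(\Omega)$ and $q^c \in H^2_{\Gamma}(\Omega)$ with $q^c \perp \mathcal{P}_{1,\Gamma}$, $\sigma_1 = \phi^c + \airy q^c$, and $\|\phi^c\|_{0,\Omega} + \|q^c\|_{2,\Omega} \leq C\|\sigma\|_{\dive,\Omega}$. I then set $\phi_3 := \Pi_{\hdivspace}^{3} \phi^c \in \hdivspace_{\Gamma}^{3} \subset \hdivspace_{\Gamma}^{p}$, the crucial step being to verify $\phi_3 \in \mathfrak{H}_{\Gamma}^{3}$. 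Divergence-freeness follows directly from~\cref{eq:commute-div}, while for $q' \in \htwospace_{\Gamma}^{5}$ the first line of the defining system~\cref{eq:pi-sigma-def} yields $(\phi_3, \airy q') = (\phi^c, \airy q') = 0$ since $\phi^c \in \mathfrak{H}_{\Gamma}(\Omega)$ and $\airy q' \in \airy H^{2}_{\Gamma}(\Omega)$; the $L^2$- and $\hdiv$-inner products agree here because both factors are divergence-free. The $hp$-uniform bound~\cref{eq:pi-sigma-bounded}, applied at the fixed order $p = 3$, then gives $\|\phi_3\|_{\dive,\Omega} \leq C\|\phi^c\|_{0,\Omega} \leq C\|\sigma\|_{\dive,\Omega}$.

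It remains to show $\rho := \sigma - \tau - \phi_3 = \airy q$ for some $q \in \htwospace_{\Gamma}^{p+2}$. Clearly $\dive\rho = 0$; moreover, for each $m \in \mathfrak{I}^*$ and $r \in \rigidbodies$, the third line of~\cref{eq:pi-sigma-def} enforces $\langle \phi_3 \bn, r\rangle_{\partial\Omega_m} = \langle \phi^c \bn, r\rangle_{\partial\Omega_m}$, and the fluxes of $\airy q^c$ across $\partial\Omega_m$ vanish as a property of the continuous complex (cf.~the arguments underlying~\cref{lem:exact-sequence-continuous-disp}), so $\langle \rho\bn, r\rangle_{\partial\Omega_m} = 0$. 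A dimension count in~\cref{lem:sigma-projection-dofs} then identifies the divergence-free, zero-flux subspace of $\hdivspace_{\Gamma}^{p}$ exactly with $\airy \htwospace_{\Gamma}^{p+2}$, yielding $\rho = \airy q$ for some $q \in \htwospace_{\Gamma}^{p+2}$, which I take orthogonal to $\mathcal{P}_{1,\Gamma}$. The continuous Poincar\'{e} inequality supplies $\|q\|_{2,\Omega} \leq C\|\airy q\|_{0,\Omega} \leq C\|\sigma\|_{\dive,\Omega}$. The chief obstacle is the verification that $\phi_3 \in \mathfrak{H}_{\Gamma}^3$: this is what permits the harmonic component to be drawn from a fixed, $p$-independent space while retaining the $hp$-uniform bound, and hinges on the $\airy$-orthogonality and flux-matching built into the definition~\cref{eq:pi-sigma-def} of $\Pi_{\hdivspace}^{p}$.
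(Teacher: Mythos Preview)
Your proof is correct but takes a different route for constructing $\phi_3$ than the paper. The construction of $\tau$ (via \cref{thm:invert-div-fem} followed by orthogonal projection onto $(N_{\Gamma}^{p})^{\perp}$) and the identification of the remainder $\rho$ as $\airy q$ (using that divergence-free, zero-flux tensors in $\hdivspace_{\Gamma}^{p}$ fill out $\airy\htwospace_{\Gamma}^{p+2}$) are essentially the same in both arguments.

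The difference lies in how $\phi_3$ is produced. The paper stays entirely at the discrete level: it applies \cref{lem:projection-matching} at $p=3$ with $q\equiv 0$, $v\equiv 0$, and $\kappa_{m,l}:=\langle(\sigma-\tau)\bn,r_l\rangle_{\partial\Omega_m}$, which directly yields $\phi_3\in\mathfrak{H}_{\Gamma}^{3}$ matching the fluxes of $\sigma-\tau$ with the stability bound built into~\cref{eq:projection-matching-stability}. You instead detour through the continuous complex: decompose $\sigma-\tau=\phi^c+\airy q^c$ with $\phi^c\in\mathfrak{H}_{\Gamma}(\Omega)$, then set $\phi_3:=\Pi_{\hdivspace}^{3}\phi^c$ and exploit the defining moments~\cref{eq:pi-sigma-def} to verify $\phi_3\in\mathfrak{H}_{\Gamma}^{3}$ and match fluxes. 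Both are valid, and since $\Pi_{\hdivspace}^{3}$ is itself built from \cref{lem:projection-matching}, you are ultimately invoking the same machinery. The paper's route is more economical (no continuous Hodge decomposition is needed), while yours illustrates concretely that the commuting projection $\Pi_{\hdivspace}^{3}$ carries continuous harmonic forms to discrete ones with uniform bounds, which is precisely the ``uniform representation of cohomology'' mentioned after the theorem.
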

One consequence of~\cref{lem:dim-harmonic-discrete,thm:stable-decomposition-low-airy-div} 
of independent theoretical interest
is that if one views the complex~\cref{eq:exact-sequence-fem} as a scale of complexes~\cite{Arnold2021} indexed by the polynomial degree $p$, then the space $\mathfrak{H}_{\Gamma}^3$, which consists of piecewise-cubic divergence-free symmetric tensors orthogonal to $\airy \htwospace_{\Gamma}^{5}$, \textit{uniformly represents the cohomology} of the scale of complexes~\cref{eq:exact-sequence-fem} in the sense of~\cite{Arnold2021}. In other words, the space $\mathfrak{H}_{\Gamma}^3$ captures the structure of both the discrete cohomology space $\mathfrak{H}_{\Gamma}^p$ for all $p \geq 3$ and the continuous cohomology space $\mathfrak{H}_{\Gamma}(\Omega)$.

 
\subsection{Extension to the Arnold-Winther space}

One alternative to the stress space $\hdivspace_{\Gamma}^{p}$ is the more well-known Arnold--Winther symmetric stress space~\cite{Arnold2002} given (in terms of $\hdivspace^p$) by
 \begin{equation}\label{eq:arnold-winther-def}
    \tilde{\hdivspace}^p := \{\sigma\in \hdivspace^{p} :\Div\sigma\in \ltwospace^{p - 2}\} \quad \text{and} \quad \tilde{\hdivspace}_{\Gamma}^p := \tilde{\hdivspace}^p \cap \hdivgamma{\Omega}{\mathbb{S}}.
 \end{equation}
This space satisfies the same sequence property~\cref{eq:exact-sequence-fem} with $\hdivspace_{\Gamma}^{p}$ and $\ltwospace_{\Gamma}^{p-1}$ replaced by $\tilde{\hdivspace}^p$ and $\ltwospace_{\Gamma}^{p-2}$. In fact, almost every result in this work with $\hdivspace_{\Gamma}^{p}$ and $\ltwospace_{\Gamma}^{p-1}$ replaced by $\tilde{\hdivspace}^p_\Gamma$ and $\ltwospace_{\Gamma}^{p-2}$ holds with the exact same proof, including all of the results in this section. For concreteness, we state the analog of~\Cref{thm:invert-div-fem} and~\Cref{thm:stable-decomposition-low-airy-div}.
 \begin{corollary}
    \emph{(The Arnold--Winther space.)}
    Let $p\geq 3$. For every $u \in \ltwospace_{\Gamma}^{p - 2}$, there exists $\sigma \in \tilde{\hdivspace}_{\Gamma}^{p}$ satisfying~\cref{eq:global-div-interp} with the same constant. 
    Moreover, for every $\sigma \in \tilde{\Sigma}_{\Gamma}^{p}$, there exists $q \in Q_{\Gamma}^{p + 2}$,
 \begin{align*}
        \phi_3 &\in \{ \phi \in \tilde{\hdivspace}_{\Gamma}^3 : \dive \phi \equiv 0 \text{ and } (\phi, \airy r)_{\dive, \Omega} = 0 \ \forall~r \in Q_{\Gamma}^{p+2} \}, \text{ and } \\
        \tau &\in \{ \psi \in \tilde{\hdivspace}_{\Gamma}^{p} : (\psi, \rho)_{\dive, \Omega} = 0 \ \forall~\rho \in \tilde{\hdivspace}_{\Gamma}^{p}, \ \dive \rho \equiv 0 \}
 \end{align*}
    satisfying $\sigma = \phi_3 + \airy q + \tau$ and~\cref{eq:low-order-div-free-high-order-decomp} with $C$ independent of $h$ and $p$.
 \end{corollary}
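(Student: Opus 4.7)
The plan is to deduce both statements directly from their unrestricted counterparts, \cref{thm:invert-div-fem} and \cref{thm:stable-decomposition-low-airy-div}, by noting that the only additional constraint defining the Arnold--Winther space is a divergence-degree restriction that is automatically respected by every tensor produced in those constructions; no new technical machinery is needed, only a degree-bookkeeping check.

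For the bounded right inverse, I would take $u \in \ltwospace_\Gamma^{p-2}$ and view it as an element of $\ltwospace_\Gamma^{p-1}$, then invoke \cref{thm:invert-div-fem} to produce $\sigma \in \hdivspace_\Gamma^p$ with $\dive \sigma = u$ and $\|\sigma\|_{\dive, \Omega} \leq C \|u\|_{0, \Omega}$, where $C$ is independent of $h$ and $p$. The sole new observation is the elementwise identity $\dive \sigma|_K = u|_K \in \mathcal{P}_{p-2}(K; \mathbb{V})$, which is precisely the defining condition \cref{eq:arnold-winther-def} of $\tilde{\hdivspace}^p$; combined with the inherited boundary condition, this yields $\sigma \in \tilde{\hdivspace}_\Gamma^p$ with the same constant.

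For the decomposition, I would apply \cref{thm:stable-decomposition-low-airy-div} to $\sigma \in \tilde{\hdivspace}_\Gamma^p \subset \hdivspace_\Gamma^p$ to obtain $\phi_3$, $q \in Q_\Gamma^{p+2}$, and $\tau$ satisfying $\sigma = \phi_3 + \airy q + \tau$ with the bounds \cref{eq:low-order-div-free-high-order-decomp}. Each summand then lies in the Arnold--Winther analog space: both $\airy q$ and the piecewise-cubic $\phi_3$ are divergence-free, so they belong to $\tilde{\hdivspace}_\Gamma^p$ and $\tilde{\hdivspace}_\Gamma^3$ respectively; and $\tau = \sigma - \phi_3 - \airy q$ is a linear combination of elements of $\tilde{\hdivspace}_\Gamma^p$ and so lies there as well. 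The orthogonality required of $\tau$ collapses to $\tau \in (N_\Gamma^p)^\perp$, which is already given, since every divergence-free $\rho \in \tilde{\hdivspace}_\Gamma^p$ automatically sits inside $N_\Gamma^p$.

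The main subtlety, and the only nontrivial point I anticipate, is verifying that the cubic piece $\phi_3$ obtained from \cref{thm:stable-decomposition-low-airy-div} satisfies the stronger orthogonality condition $(\phi_3, \airy r)_{\dive, \Omega} = 0$ for every $r \in Q_\Gamma^{p+2}$ stated here, whereas membership in $\mathfrak{H}_\Gamma^3$ supplies this only for $r \in Q_\Gamma^5$. I would handle this by rerunning the proof of \cref{thm:stable-decomposition-low-airy-div} inside $\tilde{\hdivspace}_\Gamma^p$ and choosing the cubic representative directly inside $\mathfrak{H}_\Gamma^p$; \cref{lem:dim-harmonic-discrete} guarantees that this space has matching dimension $3|\mathfrak{I}^*|$ independent of $p$, and the isomorphism between cubic representatives and $\mathfrak{H}_\Gamma^p$ is $hp$-stable, so all constants in \cref{eq:low-order-div-free-high-order-decomp} carry over unchanged.
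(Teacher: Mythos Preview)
The paper does not give a separate proof of this corollary; it simply asserts that ``almost every result in this work with $\hdivspace_{\Gamma}^{p}$ and $\ltwospace_{\Gamma}^{p-1}$ replaced by $\tilde{\hdivspace}^p_\Gamma$ and $\ltwospace_{\Gamma}^{p-2}$ holds with the exact same proof''. Your route---deducing the Arnold--Winther statements from the Hu--Zhang ones via the inclusions $\ltwospace_\Gamma^{p-2}\subset\ltwospace_\Gamma^{p-1}$ and $\tilde{\hdivspace}_\Gamma^p\subset\hdivspace_\Gamma^p$---is a genuine alternative and is cleaner than rerunning all the arguments. Your treatment of the right inverse is correct, and for the decomposition your handling of $\tau$ is also correct: the divergence-free subspaces of $\tilde{\hdivspace}_\Gamma^p$ and $\hdivspace_\Gamma^p$ coincide (both equal $N_\Gamma^p$), so the orthogonality required of $\tau$ in the corollary is exactly $\tau\in(N_\Gamma^p)^\perp$, which you already have.

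The one real gap is your proposed fix for $\phi_3$. Your deduction yields $\phi_3\in\mathfrak{H}_\Gamma^3$, i.e.\ orthogonal to $\airy\htwospace_\Gamma^{5}$, whereas the corollary as written asks for orthogonality to $\airy\htwospace_\Gamma^{p+2}$. Your suggestion to ``choose the cubic representative directly inside $\mathfrak{H}_\Gamma^p$'' does not work: elements of $\mathfrak{H}_\Gamma^p$ are generically genuine degree-$p$ tensors (they are obtained by $L^2$-projection inside $N_\Gamma^p$), and there is no reason $N_\Gamma^3\cap\mathfrak{H}_\Gamma^p$ should have dimension $3|\mathfrak{I}^*|$. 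Note, however, that the paper's own strategy---rerunning the proof of \cref{thm:stable-decomposition-low-airy-div} verbatim---also invokes \cref{lem:projection-matching} with $p=3$ in Step~2 and therefore also produces $\phi_3\in\mathfrak{H}_\Gamma^3$, orthogonal only to $\airy\htwospace_\Gamma^{5}$. So the ``$\htwospace_\Gamma^{p+2}$'' in the corollary is almost certainly a typo for ``$\htwospace_\Gamma^{5}$'', and with that correction your deduction argument goes through without any rerunning at all.
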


 
\section{Numerical examples}\label{sec:numerics}

We now provide numerical demonstrations of uniform stability in the sense of~\Cref{thm:invert-div-fem}, before detailing the proof in the forthcoming sections.
We compute the inf-sup constant $\beta(h, p)$ in~\cref{eq:inf-sup-def}, which may be computed as the square root of the smallest eigenvalue $\nu$ of the generalized eigenproblem
 \begin{equation*}
    BA^{-1}B^\top p = \nu Cp,
 \end{equation*}
where $A_{ij} = (\tau_j, \tau_i)_{\Div, \Omega}$, $B_{ij} = (\Div\tau_j, v_i)_{\Omega}$, and $C_{ij} = (v_j, v_i)_{\Omega}$, for some choice of bases $\{\tau_i\}$ of $\hdivspace^p$ and $\{v_i\}$ of $\ltwospace^{p - 1}$~\cite[Section 3.5]{Elman2014}.
Using the finite element library Firedrake~\cite{Ham2023}\footnote{The Hu--Zhang elements are Piola-inequivalent, so that implementation of their bases in Firedrake required special transformations after mapping from the reference cell~\cite{Aznaran2022}.
Code, scripts, and exact software versions will be provided for reproducibility upon acceptance.
}, we plot these inf-sup constants for $3 \leq p \leq 12$ in~\cref{fig:inf-sup-p} for (i) the unit triangle $\{ (x, y) \in \mathbb{R}^2 : 0 \leq x, y , x + y \leq 1 \}$ with $|\Gamma| = |\Gamma_N|$ (full displacement), (ii) the unit triangle with $|\Gamma| = |\Gamma_D|$ (full traction), (iii) the unit square $(0, 1)^2$ meshed as in~\cref{fig:unit-square} with $|\Gamma| = |\Gamma_N|$, and (iv) a nonconvex and non-simply connected L-shape domain, with a nonuniform and asymmetric mesh as in~\cref{fig:L-shaped-domain}, with $|\Gamma| = |\Gamma_N|$. We also display in~\cref{fig:inf-sup-h} the inf-sup constants as a function of $h$ for uniform refinements (subdividing every triangle into four congruent subtriangles) of the unit square mesh. As predicted by~\Cref{thm:invert-div-fem}, the inf-sup constants are bounded away from zero as $p$ increases and/or as the mesh is refined. In fact, the inf-sup constants are near 1, which is the largest possible value since
\begin{align*}
    \beta(h, p) = \inf_{u\in\ltwospace_{\Gamma}^{p - 1}} \sup_{\sigma\in\hdivspace_{\Gamma}^{p}} \frac{ (\dive \sigma, u) }{ \|\sigma\|_{\dive, \Omega} \|u\|_{0, \Omega}  } \leq \inf_{u\in\ltwospace_{\Gamma}^{p - 1}} \sup_{\sigma\in\hdivspace_{\Gamma}^{p}} \frac{ \|\dive \sigma \|_{0,\Omega} \|u\|_{0, \Omega}  }{ \|\sigma\|_{\dive, \Omega} \|u\|_{0, \Omega}  } \leq 1.
\end{align*}

\begin{figure}[htb]
	\centering
	\begin{subfigure}[t]{0.4\linewidth}
		\includegraphics[width = 1.0\linewidth]{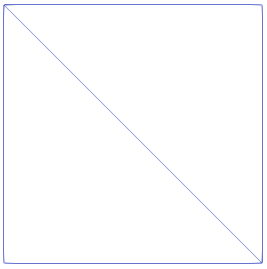}
		\caption{A coarse triangulation of $(0, 1)^2$.}\label{fig:unit-square}
	\end{subfigure}
	\hfill
	\begin{subfigure}[t]{0.4\linewidth}
		\includegraphics[width = 1.0\linewidth]{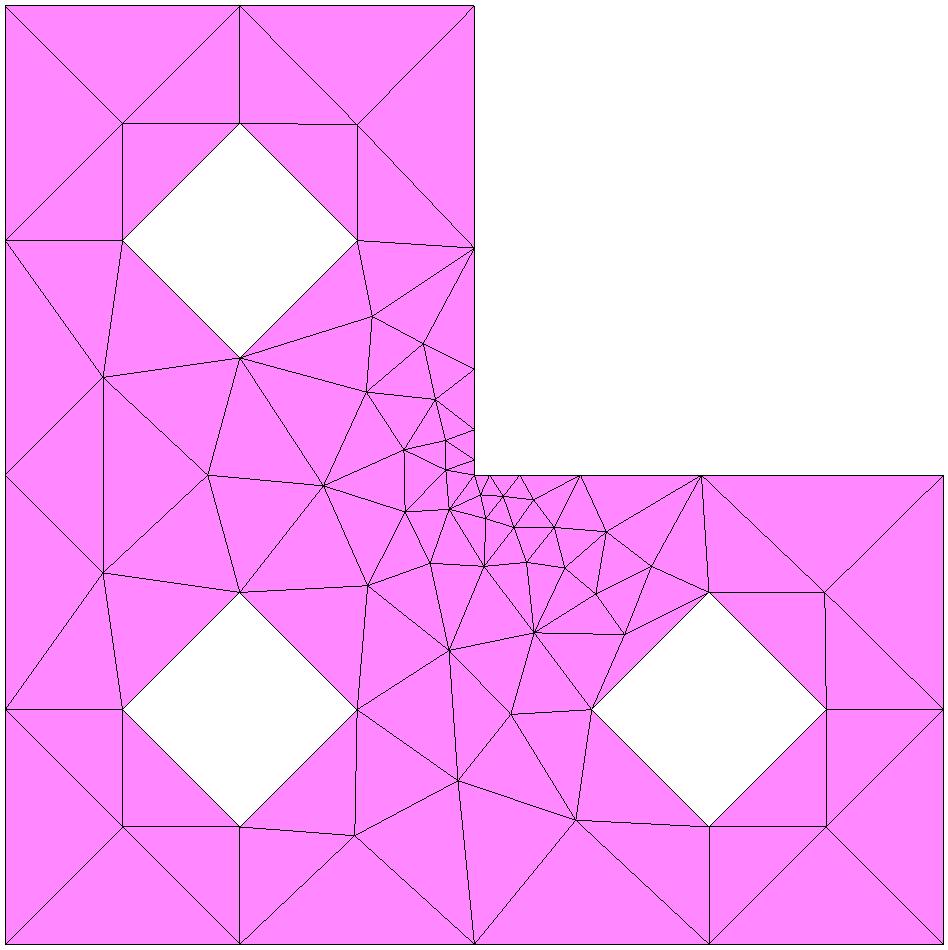}
		\caption{L-shape with holes, with a nonuniform, asymmetric mesh.}\label{fig:L-shaped-domain}
	\end{subfigure}
	\caption{Computational domains}
\end{figure}

 \begin{figure}[htb]
    \centering
    \begin{subfigure}[t]{0.48\linewidth}
        \includegraphics[width = 1.0\linewidth]{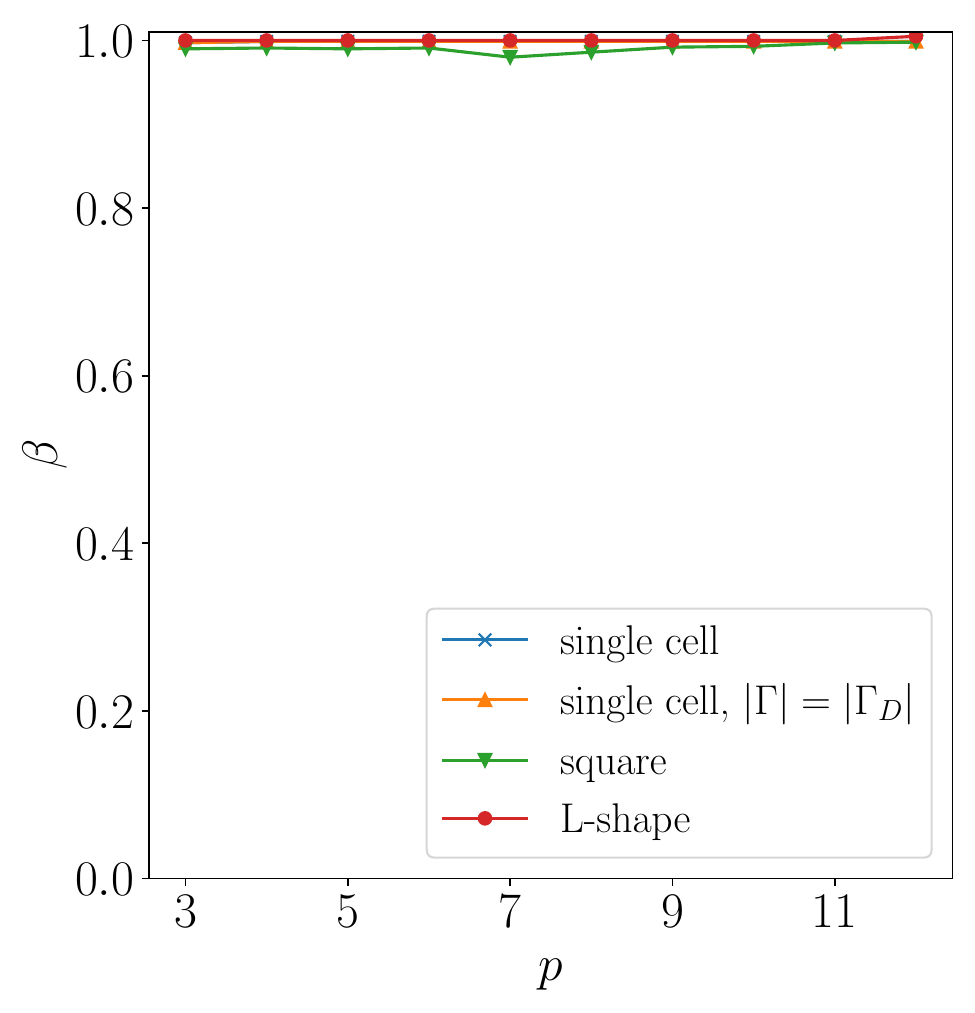}
        \caption{On the unit triangle (without and with traction conditions), on the mesh in~\cref{fig:unit-square}, and on the L-shape in~\Cref{fig:L-shaped-domain}.}\label{fig:inf-sup-p}
    \end{subfigure}
    \hfill
    \begin{subfigure}[t]{0.48\linewidth}
        \includegraphics[width = 1.0\linewidth]{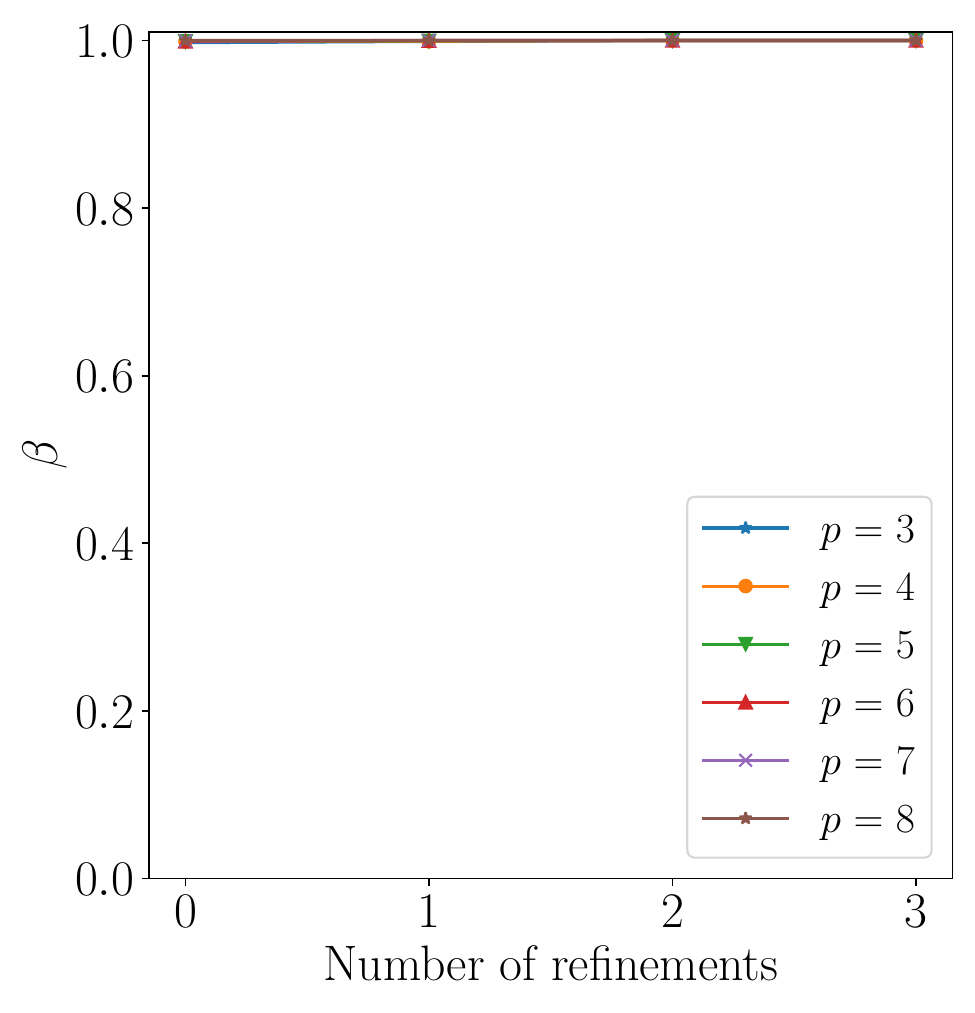}
        \caption{For uniform refinements of the mesh in~\Cref{fig:unit-square}.}\label{fig:inf-sup-h}
    \end{subfigure}
     \caption{Inf-sup constants $\beta(h, p)$ for the Hu--Zhang pair $\Sigma_\Gamma^p\times V_\Gamma^{p - 1}$ for various polynomial degrees $p$ and on various meshes.}
    \label{fig:infsup-computations}
 \end{figure} 

 
\section{Bounded Poincar\'{e} operators on an element}\label{sec:poincare-element}

 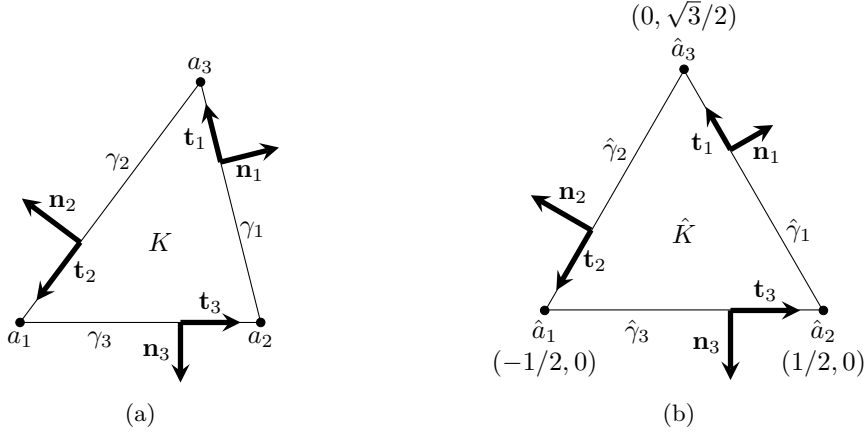
\begin{figure}[htb]
    \centering
    \begin{subfigure}[b]{0.45\linewidth}
        \centering
        \begin{tikzpicture}[scale=0.8]
            \filldraw (-2,0) circle (2pt) node[align=center,below]{$a_1$}
            -- (2,0) circle (2pt) node[align=center,below]{$a_2$}
            -- (1, 4) circle (2pt) node[align=center,above]{$a_3$}
            -- (-2,0);
            
            \coordinate (a1) at (-2,0);
            \coordinate (a2) at (2,0);
            \coordinate (a3) at (1, 4);
            
            \coordinate (e113) at ($(a2)!1/3!(a3)$);
            \coordinate (e123) at ($(a2)!2/3!(a3)$);
            \coordinate (e1231) at ($(a2)!2/3+1/sqrt(17)!(a3)$);
            
            \coordinate (e213) at ($(a3)!1/3!(a1)$);
            \coordinate (e223) at ($(a3)!2/3!(a1)$);
            \coordinate (e2231) at ($(a3)!2/3+1/sqrt(17)!(a1)$);
            
            \coordinate (e313) at ($(a1)!1/3!(a2)$);
            \coordinate (e323) at ($(a1)!2/3!(a2)$);
            \coordinate (e3231) at ($(a1)!2/3+1/4!(a2)$);
            
            \draw ($(e113)+(0.2,0.2)$) node[align=center]{$\gamma_1$};
            \draw ($(e213)+(0,0)$) node[align=center,left]{$\gamma_2$};
            \draw ($(e313)+(0,0)$) node[align=center,below]{$\gamma_3$};
            
            \draw[line width=2, -stealth] (e123) -- (e1231);
            \draw ($($(e123)!0.5!(e1231)$)+(-0.3,-0.1)$) node[align=center]{$\bt_1$};
            \draw[line width=2, -stealth] (e123) -- ($(e123)!1!-90:(e1231)$);
            \draw ($($(e123)!0.5!-90:(e1231)$)+(0.,-0.3)$) node[align=center]{$\bn_1$};
            \draw[line width=2, -stealth] (e223) -- (e2231);
            \draw ($($(e223)!0.5!(e2231)$)+(0.1,0)$) node[align=center, right]{$\bt_2$};
            \draw[line width=2, -stealth] (e223) -- ($(e223)!1!-90:(e2231)$);
            \draw ($($(e223)!0.5!-90:(e2231)$)+(0.2,0)$) node[align=center, above]{$\bn_2$};
            \draw[line width=2, -stealth] (e323) -- (e3231);
            \draw ($($(e323)!0.5!(e3231)$)+(0,0)$) node[align=center, above]{$\bt_3$};
            \draw[line width=2, -stealth] (e323) -- ($(e323)!1!-90:(e3231)$);
            \draw ($($(e323)!0.5!-90:(e3231)$)+(0,0)$) node[align=center, left]{$\bn_3$};
            
            \draw (1/3, 4/3) node(T){$K$};
            
        \end{tikzpicture}
        \caption{}
        \label{fig:general-triangle}
    \end{subfigure}
    \hfill
    \begin{subfigure}[b]{0.45\linewidth}
        \centering
        \begin{tikzpicture}[scale=0.8]
            
            \node[regular polygon, regular polygon sides=3, draw, minimum size=4.25cm] (m) at (0,0) {};
            
            \coordinate (a1) at (m.corner 2);
            \coordinate (a2) at (m.corner 3);
            \coordinate (a3) at (m.corner 1);
            
            \filldraw (a1) circle (2pt) node[align=center,below]{$\hat{a}_1$ \\ $(-1/2, 0)$};
            \filldraw (a2) circle (2pt) node[align=center,below]{$\hat{a}_2$ \\ $(1/2,0)$};
            \filldraw (a3) circle (2pt) node[align=center,above]{$(0, \sqrt{3}/2)$\\$\hat{a}_3$};

            \coordinate (e113) at ($(a2)!1/3!(a3)$);
            \coordinate (e123) at ($(a2)!2/3!(a3)$);
            \coordinate (e1231) at ($(a2)!2/3+1/sqrt(32)!(a3)$);
            
            \coordinate (e213) at ($(a3)!1/3!(a1)$);
            \coordinate (e223) at ($(a3)!2/3!(a1)$);
            \coordinate (e2231) at ($(a3)!2/3+1/4!(a1)$);
            
            \coordinate (e313) at ($(a1)!1/3!(a2)$);
            \coordinate (e323) at ($(a1)!2/3!(a2)$);
            \coordinate (e3231) at ($(a1)!2/3+1/4!(a2)$);
            
            \draw ($(e113)+(0,0)$) node[align=center,right]{$\hat{\gamma}_1$};
            \draw ($(e213)+(0,0)$) node[align=center,left]{$\hat{\gamma}_2$};
            \draw ($(e313)+(0,0)$) node[align=center,below]{$\hat{\gamma}_3$};
            
            \draw[line width=2, -stealth] (e123) -- (e1231);
            \draw ($($(e123)!0.5!(e1231)$)+(-0.25,-0.25)$) node[align=center]{$\bt_1$};
            \draw[line width=2, -stealth] (e123) -- ($(e123)!1!-90:(e1231)$);
            \draw ($($(e123)!0.5!-90:(e1231)$)+(0.25,-0.3)$) node[align=center]{$\bn_1$};
            \draw[line width=2, -stealth] (e223) -- (e2231);
            \draw ($($(e223)!0.5!(e2231)$)+(0,0)$) node[align=center, right]{$\bt_2$};
            \draw[line width=2, -stealth] (e223) -- ($(e223)!1!-90:(e2231)$);
            \draw ($($(e223)!0.5!-90:(e2231)$)+(0.2,0)$) node[align=center, above]{$\bn_2$};
            \draw[line width=2, -stealth] (e323) -- (e3231);
            \draw ($($(e323)!0.5!(e3231)$)+(0,0)$) node[align=center, above]{$\bt_3$};
            \draw[line width=2, -stealth] (e323) -- ($(e323)!1!-90:(e3231)$);
            \draw ($($(e323)!0.5!-90:(e3231)$)+(0,0)$) node[align=center, left]{$\bn_3$};

            \draw (m.center) node(T){$\hat{K}$};
        \end{tikzpicture}
        \caption{}
        \label{fig:reference-triangle}
    \end{subfigure}
    \caption{Notation for (a) general triangle $K$ and (b) reference triangle $\hat{K}$.}
 \end{figure}

We first consider the case that $\Omega = \hat{K}$ is the reference element~\cref{fig:reference-triangle}, pure traction conditions are imposed, and the mesh consists of exactly one element. In this case, the displacement belongs to the space
 \begin{equation*}
    \ltwomodrm(\hat{K}; \mathbb{V}) := \left\{ u \in L^2(\hat{K}; \mathbb{V}):(u, r)_{\Omega} = 0 \ \forall~r \in \rigidbodies \right\}. 
 \end{equation*}
The main result of this section is the following:
 \begin{theorem}[Inversion of the divergence with boundary conditions.]\label{thm:poincare-operators-element}
    There exist bounded linear operators
 \begin{equation*}
        P_1: \hdivz{\hat{K}}{\mathbb{S}} \to H^2_0(\hat{K}) \quad \text{and} \quad 
        P_2: \ltwomodrm(\hat{K}; \mathbb{V}) \to H^1(\hat{K}; \mathbb{S}) \cap \hdivz{\hat{K}}{\mathbb{S}} 
 \end{equation*}
    that satisfy a homotopy relation:
 \begin{equation}\label{eq:poincare-operators-element-homotopy}
        P_1 \airy = I, \quad \airy P_1 + P_2 \dive = I, \quad \text{and} \quad \dive P_2 = I.
 \end{equation}
    Moreover, $P_1$ maps $H^{s}(\hat{K}; \mathbb{S}) \cap \hdivz{\hat{K}}{\mathbb{S}}$ boundedly into $H^{s+2}(\hat{K}) \cap H^2_0(\hat{K})$ and $P_2$ maps $H^{s}(\hat{K}; \mathbb{V}) \cap \ltwomodrm(\hat{K}; \mathbb{V})$ boundedly into $H^{s+1}(\hat{K}; \mathbb{S}) \cap \hdivz{\hat{K}}{\mathbb{S}}$ for all real $s \geq 0$, with
 \begin{equation}\label{eq:poincare-operators-element-cont}
        \| P_1 \sigma\|_{s+2, \hat{K}} \leq C \{ \| \sigma \|_{s, \hat{K}} + \|\dive \sigma\|_{0, \hat{K}} \} \quad \text{and} \quad \|P_2 u \|_{s+1, \hat{K}} \leq C \|u\|_{s, \hat{K}},
    \end{equation}
    where $C$ depends only on $s$.
    
    Additionally, the operators preserve polynomials: for $p \geq 0$, there holds
 \begin{subequations}\label{eq:poincare-operators-element-poly-pres}
        \begin{align}
            P_1  (\mathcal{P}_{p}(\hat{K}; \mathbb{S}) \cap \hdivz{\hat{K}}{\mathbb{S}}) &\subseteq \mathcal{P}_{p+2}(\hat{K}) \cap H^2_0(\Omega), \\
            P_2( \mathcal{P}_{p}(\hat{K}; \mathbb{V}) \cap \ltwomodrm(\hat{K}; \mathbb{V})) &\subseteq \mathcal{P}_{p+1}(\hat{K}; \mathbb{S}) \cap \hdivz{\hat{K}}{\mathbb{S}}.
        \end{align}
 \end{subequations}
\end{theorem}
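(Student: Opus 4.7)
The plan is to construct $P_1$ and $P_2$ via the Bernstein--Gelfand--Gelfand (BGG) framework of~\cite{Arnold2021}, thereby reducing the problem to invoking well-known bounded, polynomial-preserving Poincar\'e operators for the de~Rham complex on the star-shaped reference element $\hat{K}$. In two dimensions the traction-free stress complex
\begin{equation*}
  0 \to H^2_0(\hat{K}) \xrightarrow{\airy} \hdivz{\hat{K}}{\mathbb{S}} \xrightarrow{\dive} \ltwomodrm(\hat{K}; \mathbb{V}) \to 0
\end{equation*}
arises as the outcome of a diagram chase between two copies of an appropriate de~Rham-type complex on $\hat{K}$, one scalar/vector-valued and one vector/matrix-valued, connected by a zeroth-order algebraic transfer map (the extraction of the skew-symmetric part of a matrix, identified in 2D with a scalar).

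First I would appeal to bounded Poincar\'e operators for each of the two de~Rham complexes on $\hat{K}$, carrying homogeneous boundary conditions compatible with those on the stress complex. Such operators can be built from Koszul/Bogovskii-type integral operators together with corrections enforcing the boundary conditions; they gain one order of Sobolev regularity at each step, satisfy a standard homotopy identity $d\kappa + \kappa d = I$, and preserve the polynomial degree. Composing these with the BGG transfer map and its partial inverse, a short diagram chase then defines $P_2$ on $\ltwomodrm(\hat{K}; \mathbb{V})$ as a right inverse of $\dive$ landing in $H^1(\hat{K}; \mathbb{S}) \cap \hdivz{\hat{K}}{\mathbb{S}}$, and $P_1$ on the divergence-free, traction-free subspace of $\hdivz{\hat{K}}{\mathbb{S}}$, which by the exactness afforded by~\cref{lem:exact-sequence-continuous-disp} applied to the simply connected domain $\hat{K}$ is precisely $\airy H^2_0(\hat{K})$. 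Writing any $\sigma \in \hdivz{\hat{K}}{\mathbb{S}}$ as $\sigma = (\sigma - P_2 \dive \sigma) + P_2 \dive \sigma$, the divergence-free first summand is inverted under $\airy$ by the scalar de~Rham Poincar\'e layer to yield $P_1\sigma$, and the identities~\cref{eq:poincare-operators-element-homotopy} follow by a routine combination of the two de~Rham homotopy relations with the BGG transfer square.

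The Sobolev bounds~\cref{eq:poincare-operators-element-cont} and the polynomial-preservation statement~\cref{eq:poincare-operators-element-poly-pres} would then inherit directly from the corresponding properties of the de~Rham Poincar\'e operators: the BGG transfer map is purely algebraic and therefore alters neither polynomial degree nor Sobolev class, while the one-order gain of $P_2$ (respectively the two-order gain of $P_1$) follows by composing the analogous gains of the Bogovskii-type right inverse of $\dive$ (and of the scalar-valued Poincar\'e at the left end of the top complex).

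The principal obstacle is the careful propagation of boundary conditions through the BGG chase. One must impose complementary homogeneous boundary conditions on the two de~Rham complexes so as to guarantee simultaneously that (i) the output of $P_1$ lies in $H^2_0(\hat{K})$ rather than merely in $H^2(\hat{K})$; (ii) the output of $P_2$ belongs to $H^1(\hat{K}; \mathbb{S})$ and satisfies the full symmetric normal-trace condition $\sigma \bn|_{\partial \hat{K}} = 0$; and (iii) the quotient by $\rigidbodies$ in $\ltwomodrm(\hat{K}; \mathbb{V})$ matches the natural compatibility conditions at the right end of the bottom de~Rham complex. Arranging trace corrections at the corners of $\hat{K}$, and verifying exactness of the quotiented diagram modulo rigid motions while preserving the polynomial-degree bookkeeping, is the analytically delicate step of the proof.
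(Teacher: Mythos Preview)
Your high-level strategy---use BGG to reduce to de~Rham Poincar\'e operators, then assemble $P_1$ and $P_2$---matches the paper. The gap is in how you handle boundary conditions and polynomial preservation simultaneously. You assert that de~Rham Poincar\'e operators exist which (a) enforce homogeneous boundary conditions, (b) gain one Sobolev order, and (c) preserve polynomial degree. These three properties are not available together off the shelf: the regularized Costabel--McIntosh operators~\cref{eq:costabel-poincare-curl,eq:costabel-poincare-div} give (b) and (c) but not (a), while Bogovski\u{\i}-type operators give (a) and (b) but not (c). Constructing de~Rham Poincar\'e operators with all three is essentially a restatement of the problem one level down, and you have not indicated how to do it.

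The paper avoids this difficulty by \emph{decoupling} the two issues. It runs the BGG machinery (the twisted and BGG complexes of~\cref{sec:invert-div-no-bc}) using only the Costabel--McIntosh operators, i.e.\ with no boundary conditions at all, producing a polynomial-preserving $\mathcal{L}_{\dive}$ with $\dive\mathcal{L}_{\dive}=I$ but the wrong trace. It then corrects the trace \emph{after} the BGG step, at the elasticity level rather than the de~Rham level: given $\tau$ with $\dive\tau\in\ltwomodrm$, it constructs compatible Dirichlet/Neumann data $(f,g)$ for a scalar potential and invokes the polynomial-preserving $H^{s+3}$ lifting of~\cite[Theorem~3.2]{Parker2023} to obtain $\mathcal{L}_{\bn}\tau$ with $(\airy\mathcal{L}_{\bn}\tau)\bn=\tau\bn$ on $\partial\hat K$. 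The corrected inverse is $\tilde{\mathcal{L}}_{\dive}=(I-\airy\mathcal{L}_{\bn})\mathcal{L}_{\dive}$, and $P_1:=\airy^{-1}(I-P_2\dive)$ is defined directly via exactness. The corner compatibility you mention is handled precisely here, in the verification that $(f,g)$ lies in the trace space $X^{s+3}(\partial\hat K)$; this is where the $\|\dive\sigma\|_{0,\hat K}$ term in~\cref{eq:poincare-operators-element-cont} enters. The polynomial-preserving extension operator of~\cite{Parker2023} is the essential missing ingredient in your outline.
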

Note that the term $\|\dive \sigma\|_{0, \hat{K}}$ appears in~\cref{eq:poincare-operators-element-cont} to account for the cases $0 \leq s < 1$. The proof of~\cref{thm:poincare-operators-element} proceeds in several steps, detailed in the subsequent sections, and appears in~\cref{sec:proof-poincare-element}. The following corollary will be useful for global stability of the pair $\hdivspace^{p}_{\Gamma} \times \ltwospace^{p - 1}_{\Gamma}$:
 \begin{corollary}[Inversion of the divergence on a physical cell.]\label{cor:single-div-inversion}
    Let $p \geq 1$ and $K \in \mathcal{T}$. For every $u \in \mathcal{P}_{p-1}(K; \mathbb{V}) \cap \ltwomodrm(K; \mathbb{V})$, there exists $\sigma \in \mathcal{P}_{p}(K; \mathbb{S}) \cap \hdivz{K}{\mathbb{S}}$ such that
 \begin{equation}\label{eq:single-div-inversion}
        \dive \sigma = u \quad \text{and} \quad  h_K^{-1} \|\sigma\|_{0, K} + |\sigma|_{1, K} \leq C \|u\|_{0, K},
 \end{equation}
    where $C$ depends only on the shape regularity constant.
 \end{corollary}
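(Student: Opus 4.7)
The plan is to reduce to the reference-element result~\cref{thm:poincare-operators-element} by pulling back through the affine diffeomorphism $F:\hat{K}\to K$, $F(\hat{x})=B\hat{x}+b$, using the matrix Piola transformation adapted to $H(\dive;\mathbb{S})$. Concretely, I would set $\hat{u}(\hat{x}):=\det(B)\,B^{-1}u(F(\hat{x}))$, solve the reference-element problem, and then push the solution forward by $\sigma(\bx):=\det(B)^{-1}B\,\hat{\sigma}(F^{-1}(\bx))B^{T}$. Under shape-regularity, $|\det B|\sim h_K^2$, $\|B\|\sim h_K$, and $\|B^{-1}\|\sim h_K^{-1}$, and both transforms preserve polynomial degree because $B$ is constant.

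First I would record the standard properties of the matrix Piola: it preserves symmetry (so $B^{-1}\hat{\sigma}B^{-T}$ stays in $\mathbb{S}$), it maps $\hdivz{\hat{K}}{\mathbb{S}}$ to $\hdivz{K}{\mathbb{S}}$ because the vanishing-normal-trace condition transforms correctly, and it intertwines the divergence in the sense $\dive\sigma = \det(B)^{-1}B(\dive\hat{\sigma})\circ F^{-1}$, so that $\dive\hat{\sigma}=\hat{u}$ implies $\dive\sigma=u$. These are classical (cf.\ the Arnold--Winther analysis) and I would quote rather than rederive them.

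The main obstacle is verifying that $\hat{u}$ actually lies in $\ltwomodrm(\hat{K};\mathbb{V})$, since rigid body motions do not a priori transform to rigid body motions under the vector Piola. I would handle this by direct computation: a change of variables gives $(\hat{u},\hat{r})_{\hat{K}} = (u,B^{-T}\hat{r}\circ F^{-1})_{K}$ for every $\hat{r}\in\rigidbodies$, so it suffices to show $B^{-T}\hat{r}\circ F^{-1}\in\rigidbodies$ on $K$. The case of constant $\hat{r}$ is immediate. For the rotation $\hat{r}(\hat{x})=J\hat{x}$, where $J$ denotes the $90^\circ$-rotation (so $\hat{x}^\perp = J\hat{x}$), I would invoke the two-dimensional algebraic identity $B^{-T}JB^{-1}=(\det B)^{-1}J$ to reduce $B^{-T}(B^{-1}(\bx-b))^\perp$ to $(\det B)^{-1}(\bx-b)^\perp$, itself a rigid body motion on $K$. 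This orthogonality certifies that $\hat{u}\in\mathcal{P}_{p-1}(\hat{K};\mathbb{V})\cap\ltwomodrm(\hat{K};\mathbb{V})$, so I may set $\hat{\sigma}:=P_2\hat{u}$ from~\cref{thm:poincare-operators-element} and obtain the desired $\sigma$ in the correct discrete space.

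Closing off the estimate~\cref{eq:single-div-inversion} is then a routine scaling exercise. The bounds $|\det B|\,\|B^{-1}\|^2\sim 1$ and $\|B\|^4|\det B|^{-1}\sim h_K^2$ yield $\|\hat{u}\|_{0,\hat{K}}\lesssim\|u\|_{0,K}$, $\|\sigma\|_{0,K}\lesssim h_K\|\hat{\sigma}\|_{0,\hat{K}}$, and $|\sigma|_{1,K}\lesssim|\hat{\sigma}|_{1,\hat{K}}$. Combined with the reference bound $\|\hat{\sigma}\|_{1,\hat{K}}\lesssim\|\hat{u}\|_{0,\hat{K}}$ from~\cref{eq:poincare-operators-element-cont} with $s=0$, these assemble into $h_K^{-1}\|\sigma\|_{0,K}+|\sigma|_{1,K}\lesssim\|u\|_{0,K}$, with the hidden constant depending only on the shape-regularity constant.
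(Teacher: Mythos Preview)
Your proposal is correct and follows essentially the same route as the paper: pull back via the affine map, verify that the transformed $\hat{u}$ is orthogonal to rigid body motions, invoke the reference-element operator $P_2$ from \cref{thm:poincare-operators-element}, push forward with the matrix Piola, and close with standard scaling. The only cosmetic difference is that you include the $\det B$ factors in the Piola transforms while the paper omits them; you also spell out the identity $B^{-T}JB^{-1}=(\det B)^{-1}J$ to justify that the rotation pulls back to a rigid body motion, whereas the paper simply asserts $J_K^{-\top}r\circ F_K^{-1}\in\rigidbodies$ without further comment.
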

\begin{proof}
    Let $u \in \mathcal{P}_{p-1}(K; \mathbb{V}) \cap \ltwomodrm(K; \mathbb{V})$ and let $F_K : \hat{K} \to K$ denote any invertible affine mapping with Jacobian $J_K = \nabla F_K$. Let $\hat{u} := J_K^{-1} u \circ F_K$ and $r \in \rigidbodies$. Then
 \begin{equation}
        (\hat{u}, r)_{\hat{K}} = (J_K^{-1} u \circ F_K, r)_{\hat{K}} = |\det J_K|^{-1} (u, J_K^{-\top} r \circ F_K^{-1})_{K} = 0,
 \end{equation}
    since $J^{-\top} r \circ F_K^{-1} \in \rigidbodies$. Thanks to~\cref{thm:poincare-operators-element}, there exists $\hat{\sigma} \in \mathcal{P}_{p-1}(\hat{K}; \mathbb{V}) \cap \ltwomodrm(\hat{K}; \mathbb{V})$ such that $\dive \hat{\sigma} = \hat{u}$ and
    \begin{align}
        \| \hat{\sigma} \|_{1, \hat{K}} \leq C \|\hat{u}\|_{0, \hat{K}} = C |\det J_K|^{-1} \| J_K^{-1} u \|_{0, K} \leq C h_K |\det J_K|^{-2} \|u\|_{0, K}.
    \end{align}
    Let $\sigma := J_K \hat{\sigma} J_K^\top \circ F_K^{-1}$. Thanks to the chain rule, $\dive \sigma = J_K \dive \hat{\sigma} \circ F_K^{-1} = u$, and
 \begin{equation*}
        \| \sigma \|_{0, K} = |\det J_K| \| J_K \hat{\sigma} J_K^\top \|_{0, \hat{K}} \leq 2 h_K^2 |\det J_K| \|\hat{\sigma}\|_{0, \hat{K}} \leq C h_K^3 |K|^{-1} \|u\|_{0, K}.
 \end{equation*}
    The $H^1(K; \mathbb{S})$ seminorm bound follows from a similar scaling argument.
\end{proof}

 
\subsection{Inverting divergence without boundary conditions}\label{sec:invert-div-no-bc}

The first step considers the operator $P_2$, which is an inverse of the divergence operator, and drops the constraints on the boundary traces present in~\cref{thm:poincare-operators-element}. We have the following result:
 \begin{lemma}[Inversion of the divergence without boundary conditions.]\label{lem:invert-div-ref-element-nobc}
    There exists a linear operator $\mathcal{L}_{\dive} : L^2(\hat{K}; \mathbb{V}) \to H^1(\hat{K}; \mathbb{S})$ satisfying the following: for any real number $s \geq 0$ and $u \in H^{s}(\hat{K}; \mathbb{V})$, $\mathcal{L}_{\dive} u \in H^{s+1}(\hat{K}; \mathbb{S})$ and
 \begin{equation}\label{eq:invert-div-ref-element-nobc}
        \dive \mathcal{L}_{\dive} u = u \quad \text{with} \quad \| \mathcal{L}_{\dive} u \|_{s+1, \hat{K}} \leq C \| u \|_{s, \hat{K}},
 \end{equation}
    where $C$ depends only on $s$. Additionally, if $u \in \mathcal{P}_{p}(\hat{K}; \mathbb{V})$, $p \geq 0$, then $\mathcal{L}_{\dive} u \in \mathcal{P}_{p+1}(\hat{K}; \mathbb{S})$.
 \end{lemma}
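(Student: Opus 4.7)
\textbf{Proof proposal for~\cref{lem:invert-div-ref-element-nobc}.}

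The plan is to construct $\mathcal{L}_{\dive}$ as a Bernstein--Gelfand--Gelfand (BGG) Poincar\'e operator on the reference triangle $\hat K$, which is star-shaped with respect to its barycenter $\bx_0$, following the template of~\cite{Arnold2021}. The strategy has two stages: first invert $\dive$ for a general (non-symmetric) matrix using a row-wise scalar Poincar\'e, then symmetrize via a BGG correction.

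First, recall the standard scalar de Rham Poincar\'e operator for divergence on a star-shaped domain,
\begin{equation*}
    (T f)(\bx) := (\bx - \bx_0) \int_0^1 t \, f\bigl(\bx_0 + t(\bx - \bx_0)\bigr) \, \dt,
\end{equation*}
which satisfies $\dive T f = f$ for every scalar $f$, is bounded $H^s(\hat K) \to H^{s+1}(\hat K; \mathbb{V})$ uniformly for every $s \geq 0$ (see e.g.~\cite[Ch.~7]{Arnold2018}), and preserves polynomial degree: $T \mathcal{P}_p(\hat K) \subseteq \mathcal{P}_{p+1}(\hat K; \mathbb{V})$. Applying $T$ row-wise to $u = (u_1, u_2)^\top$, i.e.~setting $\tilde\tau_{ij} := (T u_i)_j$, delivers an auxiliary matrix field $\tilde \tau \in H^{s+1}(\hat K; \mathbb{R}^{2 \times 2})$ satisfying $\dive \tilde \tau = u$ row-wise, with analogous Sobolev and polynomial bounds, but not symmetric in general.

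Second, I would enforce symmetry via a BGG correction. Writing the skew part as $\skw \tilde \tau = \mskw(q)$ for a scalar $q \in H^{s+1}(\hat K)$ of the same polynomial class as $\tilde \tau$, we have $\dive(\sym \tilde\tau) = u - \dive \mskw(q)$. The BGG construction applied to the 2D stress complex~\cite{Arnold2021} supplies, via a further scalar Poincar\'e application composed with explicit algebraic operations on $q$, a symmetric tensor $\rho \in H^{s+1}(\hat K; \mathbb{S})$ with $\dive \rho = \dive \mskw(q)$, $\|\rho\|_{s+1, \hat K} \lesssim \|u\|_{s, \hat K}$, and $\rho \in \mathcal{P}_{p+1}(\hat K; \mathbb{S})$ whenever $u \in \mathcal{P}_p(\hat K; \mathbb{V})$. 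Setting
\begin{equation*}
    \mathcal{L}_{\dive} u := \sym \tilde \tau + \rho
\end{equation*}
produces a symmetric-tensor field with $\dive \mathcal{L}_{\dive} u = u$ by construction, and inherits the claimed Sobolev and polynomial-preservation properties from $\tilde \tau$ and $\rho$.

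The main subtlety is the polynomial-degree bookkeeping: ensuring that the symmetrization correction raises the polynomial degree by exactly one relative to $u$, rather than by two, as a naive sequential use of two Poincar\'e operators might suggest. This is precisely where the BGG framework exploits algebraic cancellations between Koszul contributions from the two underlying vector-valued de Rham complexes. Once polynomial preservation is secured, the Sobolev estimates and the identity $\dive \mathcal{L}_{\dive} u = u$ follow by composition of the corresponding properties of $T$ and the algebraic correction step.
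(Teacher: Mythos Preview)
Your overall strategy---row-wise de~Rham Poincar\'e followed by a BGG symmetrization---is exactly the route the paper takes; the paper's explicit formula is $\mathcal{L}_{\dive} = \sym \circ (\curl \circ P_2^A \circ S_1 + I) \circ P_2^B$, where $P_2^B$ is your row-wise $T$ and the bracketed correction is your $\rho$. So the architecture is right.

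There is, however, a genuine gap in your choice of Poincar\'e operator. The single-point operator
\[
    (T f)(\bx) = (\bx - \bx_0)\int_0^1 t\, f(\bx_0 + t(\bx - \bx_0))\, \dt
\]
does \emph{not} map $H^s(\hat K)$ boundedly into $H^{s+1}(\hat K;\mathbb V)$; it is only $L^2\to L^2$ bounded (essentially Hardy's inequality), and differentiating in the angular direction about $\bx_0$ costs a derivative of $f$ that you do not control. The reference you cite for the smoothing property in fact discusses the \emph{regularized} operator. This is precisely why the paper uses the Costabel--McIntosh averaged Poincar\'e operators~\cref{eq:costabel-poincare-curl,eq:costabel-poincare-div}, which smooth the base point against a bump function $\theta\in C^\infty_c(\hat K)$; these are genuine pseudodifferential operators of order $-1$ and do gain a full Sobolev derivative while still preserving polynomials~\cref{eq:costabel-poly-preserving}. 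Replacing your $T$ by the regularized version fixes the estimate and leaves the rest of your argument intact.

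A secondary point: your description of the correction $\rho$ is a black-box appeal to ``BGG supplies\ldots''. The paper makes this explicit via the twisted complex and the operators $A_i,B_i$ of~\cref{eq:bgg-complex}, which is where the degree bookkeeping you flag is actually carried out. Your instinct that the naive count overshoots by one is correct; the cancellation comes from the identity $\dive\circ S_0 = -S_1\circ\curl$~\cref{eq:bgg-s-operators-anitcommute}, which lets one trade a Poincar\'e application for an algebraic $\sskw$.
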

The proof of~\cref{lem:invert-div-ref-element-nobc} relies on the BGG construction in~\cite{Cap2023}, which begins with the following diagram:
 \begin{equation}
    \begin{tikzcd}[row sep = large, column sep = 0.5in]
        0 \arrow{r} &  H^{s+3}(\hat{K}) \arrow[r, "\D_0^A :=\curl", shift left]  &  H^{s+2}(\hat{K}; \mathbb{V}) \arrow[r, "\D_1^A :=\dive", shift left] \arrow[l, "P_1^{A}", shift left] &   H^{s+1}(\hat{K}) \arrow[r, "\D_2^A := 0", shift left] \arrow[l, "P_2^{A}", shift left] & 0 \arrow[l, "P_3^{A} := 0", shift left]\\
        0 \arrow{r} \arrow[ur, "S_{-1}", start anchor=north, end anchor=south] & H^{s+2}(\hat{K}; \mathbb{V}) \arrow[ur, "S_{0}", start anchor=north, end anchor=south] \arrow[r, "\D_0^B :=\curl", shift left] & H^{s+1}(\hat{K}; \mathbb{M}) \arrow[ur, "S_{1}", start anchor=north, end anchor=south] \arrow[r, "\D_1^B :=\dive", shift left] \arrow[l, "P_1^{B}", shift left] & H^{s}(\hat{K}; \mathbb{V}) \arrow[ur, "S_{2}", start anchor=north, end anchor=south] \arrow[r, "\D_2^B := 0", shift left] \arrow[l, "P_2^{B}", shift left] & 0 \arrow[l, "P_3^{B} := 0", shift left],
    \end{tikzcd}
 \end{equation}
where $\mathbb{M} := \mathbb{R}^{2\times 2}$, $s \geq 0$, and $\curl$ in the second row is applied row-wise. We equip each of the above spaces with its canonical inner product. The operators  $S_{-1} : \{ 0 \} \to H^{s+3}(\hat{K})$, $S_0 : H^{s+2}(\hat{K}; \mathbb{V}) \to H^{s+2}(\hat{K}; \mathbb{V})$, $S_1 :  H^{s+1}(\hat{K}; \mathbb{M}) \to H^{s+1}(\hat{K})$, and $S_2 :  H^{s}(\hat{K}; \mathbb{V}) \to \{0\}$ are defined as follows
 \begin{equation}\label{eq:bgg-s-operators}
    S_{-1} := \iota, \quad S_0 := I, \quad S_1 = -2\sskw, \quad \text{and} \quad S_2 := 0,
 \end{equation}
where $\sskw m := \frac{1}{2}(m_{12} - m_{21})$. The diagram satisfies an anticommutativity property:
 \begin{equation}\label{eq:bgg-s-operators-anitcommute}
    \curl \circ S_{-1} = - S_0 \circ \iota, \quad \dive \circ S_0 = -S_1 \circ \curl, \quad \text{and} \quad 0 \circ S_1 = - S_2 \circ \dive. 
 \end{equation}

The operators $P_i^A$ and $P_i^B$ are related to the bounded (regularized) Poincar\'{e} operators from~\cite{Costabel2010}. Let $\mathcal{C}_{\curl} : H^{s}(\hat{K}; \mathbb{V}) \to H^{s+1}(\hat{K})$ and $\mathcal{C}_{\dive} : H^{s}(\hat{K}) \to H^{s+1}(\hat{K}; \mathbb{V})$, $s \geq 0$, defined by 
 \begin{align}
    \label{eq:costabel-poincare-curl}
    \mathcal{C}_{\curl}(v)(x) &= \int_{\hat{K}} \theta(y) (x - y) \cdot \int_{0}^{1} v^{\perp}(y + t(x - y)) \ \dt \ \dy, \\
    \label{eq:costabel-poincare-div}
    \mathcal{C}_{\dive}(q)(x) &= -\int_{\hat{K}} \theta(y) (x - y)\int_{0}^{1} t q(y + t(x - y)) \ \dt \ \dy,
 \end{align}
where we recall that $v^{\perp} := (-v_2, v_1)^{\top}$, and $\theta \in C^{\infty}_c(\hat{K})$ is arbitrary but fixed with $\int_{\hat{K}} \theta(y) \ \dy = 1$. These operators satisfy
 \begin{equation*}
    \curl \mathcal{C}_{\curl} + \mathcal{C}_{\dive} \div = I \quad \text{and} \quad \dive \mathcal{C}_{\dive} = I,
 \end{equation*}
and preserve polynomials in the sense that 
 \begin{align}
    \label{eq:costabel-poly-preserving}
    \mathcal{C}_{\curl} \mathcal{P}_{p}(\hat{K}; \mathbb{V}) \subset \mathcal{P}_{p+1}(\hat{K}) \quad \text{and} \quad \mathcal{C}_{\dive} \mathcal{P}_{p}(\hat{K}) \subset \mathcal{P}_{p+1}(\hat{K}; \mathbb{V}).
 \end{align}
We then define $P_1^A := \mathcal{C}_{\curl}$,  $P_2^A := \mathcal{C}_{\dive}$, and $P_i^B$ is $P_i^A$ applied row-wise, $i \in \{1,2\}$.

To proceed further, we define (pseudo-)inverses of the operators $S_i$:
\begin{align}
    \label{eq:bgg-t-operators}
    T_{-1} := 0, \quad T_0 := I, \quad T_1 = -\frac{1}{2}\mskw, \quad \text{and} \quad T_2 := \iota,
\end{align}
where $\mskw u:= \begin{pmatrix}
    0 & u\\
    -u & 0\end{pmatrix}$.
We will use the notation $\ran$ and $\ker$ to denote the range and nullspace of an operator, respectively. Additionally, given a subspace $G \subset H$, 
let $\Pi_{G} : H \to G$ denote the orthogonal projection onto $G$. Some useful properties of $S_i$ and $T_i$ are recorded in the following lemma:
 \begin{lemma}\label{lem:bgg-s-operators-properties}
    The map
    $S_{-1}$ is injective, $S_{0}$ is bijective, and $S_1$ and $S_2$ are surjective. Moreover, $S_{i}$ and $T_i$, $-1 \leq i \leq 2$, are bounded with  
 \begin{equation}\label{eq:bgg-t-operators-inverse-id}
        T_i = (S_i|_{\ker( S_i)^{\perp}} )^{-1} \circ \Pi_{\ran (S_i)},
 \end{equation}
    where $ (S_i|_{\ker(S_i)^{\perp}} )^{-1}$ denotes the inverse of the restriction of $S_i$ to the orthogonal complement of $\ker(S_i)$.
    Finally, the operators are polynomial preserving for $p \geq 0$:
 \begin{alignat*}{2}
        S_{-1} \{0\} &\subset \mathcal{P}_{p}(\hat{K}), \qquad & T_{-1} \mathcal{P}_{p}(\hat{K}) &= \{0\}, \\ 
        S_0 \mathcal{P}_{p}(\hat{K}; \mathbb{V}) &=  \mathcal{P}_{p}(\hat{K}; \mathbb{V}), \qquad & T_0 \mathcal{P}_{p}(\hat{K}; \mathbb{V}) &=  \mathcal{P}_{p}(\hat{K}; \mathbb{V}), \\
        S_1 \mathcal{P}_{p}(\hat{K}; \mathbb{M}) &= \mathcal{P}_{p}(\hat{K}), \qquad & T_1 \mathcal{P}_{p}(\hat{K}) &\subset \mathcal{P}_{p}(\hat{K}; \mathbb{M}),  \\
        S_2 \mathcal{P}_p(\hat{K}; \mathbb{V}) &= \{0\}, \qquad & T_2\{0\}&\subset \mathcal{P}_p(\hat{K}; \mathbb{V}).
 \end{alignat*}
 \end{lemma}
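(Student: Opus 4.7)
The plan is to verify each of the four assertions by a case-by-case check on $i \in \{-1, 0, 1, 2\}$, exploiting the fact that each $S_i$ and $T_i$ in \cref{eq:bgg-s-operators,eq:bgg-t-operators} is a fiberwise algebraic operation on tensor components. Consequently, every analytic claim---boundedness, injectivity/surjectivity/bijectivity, the identity \cref{eq:bgg-t-operators-inverse-id}, and polynomial preservation---reduces to a statement about finite-dimensional linear algebra applied pointwise and then promoted to Sobolev norms.

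First I would dispense with the degenerate indices: $S_{-1}$ has domain $\{0\}$ and is vacuously injective, $S_2 \equiv 0$ has codomain $\{0\}$ and is trivially surjective, and $S_0 = I$ is manifestly bijective. The only substantive case is $S_1 = -2\sskw$. I would use the Frobenius-orthogonal decomposition of $\mathbb{M}$ into its symmetric part $\mathbb{S}$ and its skew-symmetric part, extended pointwise to a splitting of $H^{s+1}(\hat K; \mathbb{M})$, and observe that $\ker S_1 = H^{s+1}(\hat K; \mathbb{S})$ since $\sskw$ annihilates symmetric matrices pointwise. A direct computation shows that $-2\sskw$ restricted to the skew-symmetric summand is an isomorphism onto $H^{s+1}(\hat K)$ with explicit inverse $-\tfrac{1}{2}\mskw$, so $S_1$ is surjective and $\Pi_{\ran(S_1)}$ is the identity on $H^{s+1}(\hat K)$. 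This verifies \cref{eq:bgg-t-operators-inverse-id} for $i = 1$; for $i \in \{-1, 2\}$ both sides reduce to the unique linear map into or out of $\{0\}$, matching $T_{-1} = 0$ and $T_2 = \iota$.

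Boundedness and polynomial preservation are then immediate from the same fiberwise perspective: each $S_i, T_i$ is a bounded linear map on a finite-dimensional target applied in every fiber, so it acts boundedly in each $H^s$ norm with a constant depending only on the tensor dimension, and it maps $\mathcal{P}_p$ into $\mathcal{P}_p$ since each output component is a fixed linear combination of input components. The principal---and very mild---obstacle is the bookkeeping in the degenerate cases $i \in \{-1, 2\}$: one must consistently interpret the \emph{inverse of a restriction of $S_i$} and the \emph{orthogonal projection onto $\ran(S_i)$} when either of those spaces reduces to $\{0\}$, both being taken to be the zero map. No genuine analysis beyond pointwise linear algebra is needed at any stage.
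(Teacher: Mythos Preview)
Your proposal is correct and takes essentially the same approach as the paper, which also proceeds by direct verification of the degenerate cases and singles out $i=1$ as the only nontrivial one, using the identity $\ker(S_1)^\perp = \{\mskw u : u \in H^{s+1}(\hat K)\}$ (equivalently, the skew-symmetric subspace you identify). Your write-up is simply more explicit about the bookkeeping in the cases $i \in \{-1,2\}$, where the paper merely says the identity ``follows from definition.''
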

\begin{proof}
    Direct verification shows that $S_{-1}$ is injective, $S_{0}$ is bijective, and $S_1$ and $S_2$ are surjective and that $S_i$ and $T_i$, $-1 \leq i \leq 2$, are bounded. Identity~\cref{eq:bgg-t-operators-inverse-id} follows from definition for $i \neq 1$, while the case $i=1$ is an immediate consequence of the identity $\mathcal{N}(S_1)^{\perp} = \{ \mskw u : u \in H^{s+1}(\hat{K}) \}$.
    The polynomial preservation properties follow by inspection.
\end{proof}

 
\subsubsection{Twisted complex}

The next step in the construction is the \textit{twisted} complex:
 \begin{equation}\label{eq:twisted-complex}
    \begin{tikzcd}[ampersand replacement = \&, column sep = 0.925in]
        \begin{pmatrix}
            H^{s+3}(\hat{K}) \\
            H^{s+2}(\hat{K}; \mathbb{V}) 
        \end{pmatrix} \arrow{d}{F_{0}}
        \arrow{r}{ \begin{pmatrix}
                \curl & 0 \\
                0 & \curl
        \end{pmatrix}} 
        \& \begin{pmatrix}
            H^{s+2}(\hat{K}; \mathbb{V}) \\
            H^{s+1}(\hat{K}; \mathbb{M}) 
        \end{pmatrix} \arrow{d}{F_{1}}  \arrow{r}{ \begin{pmatrix}
                \div & 0 \\
                0 & \div
        \end{pmatrix}} 
        \&\begin{pmatrix}
            H^{s+1}(\hat{K}) \\
            H^{s}(\hat{K}; \mathbb{V}) 
        \end{pmatrix}  \arrow{d}{F_{2}}
        \\
        \begin{pmatrix}
            H^{s+3}(\hat{K}) \\
            H^{s+2}(\hat{K}; \mathbb{V}) 
        \end{pmatrix}  \arrow{r}{\D_0^{\text{twist}}} 
        \& \begin{pmatrix}
            H^{s+2}(\hat{K}; \mathbb{V}) \\
            H^{s+1}(\hat{K}; \mathbb{M}) 
        \end{pmatrix}   \arrow{r}{\D_1^{\text{twist}}}
        \&\begin{pmatrix}
            H^{s+1}(\hat{K}) \\
            H^{s}(\hat{K}; \mathbb{V}) 
        \end{pmatrix},
    \end{tikzcd}
 \end{equation}
where
 \begin{equation*}
    \D_i^{\text{twist}} := \begin{pmatrix}
        \D_i^A & -S_i \\
        0 & \D_i^B
    \end{pmatrix} \quad \text{and} \quad
    F_i := \begin{pmatrix}
        I & P_{i+1}^A \circ S_i \\
        0 & I
    \end{pmatrix}, \quad i \in \{0, 1, 2\}.
 \end{equation*}
The operators $F_i$ are clearly invertible, and so we define Poincar\'{e} operators for the twisted complex by the rule
 \begin{equation*}
    P_i^{\text{twist}} := F_{i-1} \circ \begin{pmatrix}
        P_i^A & 0 \\
        0 & P_i^B
    \end{pmatrix} \circ F_i^{-1}, \quad i \in \{1,2\}.
 \end{equation*}
In particular, the operators satisfy~\cite[Theorem 2]{Cap2023}
 \begin{equation*}
    \D_0^{\text{twist}} P_1^{\text{twist}} + P_2^{\text{twist}} \D_1^{\text{twist}} = I \quad \text{and} \quad \D_1^{\text{twist}} P_2^{\text{twist}} = I.
 \end{equation*}
Thanks to the continuity of the operators $S_j$, $j \in \{-1,0,1,2\}$, and $P_{i}^A$, $P_i^B$, $i \in \{1,2\}$, the operators  $P_i^{\text{twist}}$ are continuous.

 
\subsubsection{BGG complex}

The final step connects the twisted complex to the BGG complex:
 \begin{equation}\label{eq:bgg-complex}
    \begin{tikzcd}[ampersand replacement=\&, column sep=0.8in]
        \begin{pmatrix}
            H^{s+3}(\hat{K}) \\
            H^{s+2}(\hat{K}; \mathbb{V}) 
        \end{pmatrix} \arrow[d, "B_0", shift left]
        \arrow{r}{ \D_0^{\text{twist}} } 
        \& \begin{pmatrix}
            H^{s+2}(\hat{K}; \mathbb{V}) \\
            H^{s+1}(\hat{K}; \mathbb{M}) 
        \end{pmatrix} \arrow[d, "B_1", shift left]  \arrow{r}{ \D_1^{\text{twist}} } 
        \&\begin{pmatrix}
            H^{s+1}(\hat{K}) \\
            H^{s}(\hat{K}; \mathbb{V}) 
        \end{pmatrix}  \arrow[d, "B_2", shift left]
        \\
        \begin{pmatrix}
            \ran(S_{-1})^{\perp} \\
            \ker(S_0) 
        \end{pmatrix} \arrow[u, "A_0", shift left]  \arrow{r}{\D_0^{BGG} } 
        \& \begin{pmatrix}
            \ran(S_0)^{\perp} \\
            \ker(S_1) 
        \end{pmatrix} \arrow[u, "A_1", shift left]  \arrow{r}{\D_1^{BGG}}
        \&\begin{pmatrix}
            \ran(S_1)^{\perp} \\
            \ker(S_2) 
        \end{pmatrix} \arrow[u, "A_2", shift left],
    \end{tikzcd}
 \end{equation}
where
 \begin{equation*}
    A_i := \begin{pmatrix}
        I & 0 \\
        T_i \circ \D_i^A & \Pi_{\ker(S_i)}
    \end{pmatrix}, \quad 
    B_i := \begin{pmatrix}
        \Pi_{\ran(S_{i-1})^{\perp}} & 0 \\
        \Pi_{\ker(S_i)} \circ \D_{i-1}^B \circ T_{i-1} & \Pi_{\ker(S_i)} 
    \end{pmatrix}, 
 \end{equation*}
and
 \begin{equation*}
    \D_i^{BGG} := \begin{pmatrix}
        \Pi_{\ran(S_{i})^{\perp}} & 0 \\
        0 & \Pi_{\ker(S_{i+1})} 
    \end{pmatrix} \circ \D_i^{\text{twist}} \circ A_i.
 \end{equation*}
The operators $P_i^{BGG} := B_{i-1} \circ P_i^{\text{twist}} \circ A_i$ satisfy~\cite[Theorem 3]{Cap2023}
 \begin{equation}\label{eq:bgg-homotopy-property}
    \D_0^{BGG} P_1^{BGG} + P_2^{BGG} \D_1^{BGG} = I \quad \text{and} \quad \D_1^{BGG} P_2^{BGG} = I.
 \end{equation}
Additionally, $P_i^{BGG}$, $i \in \{1,2\}$ are bounded thanks to the boundedness of $A_i$, $B_i$ and $P_i^{\text{twist}}$. We are now in a position to prove~\cref{lem:invert-div-ref-element-nobc}.

 
\subsubsection{Proof of~\cref{lem:invert-div-ref-element-nobc}}

Let $\sym m = \frac{1}{2}(m + m^{\top})$ denote the symmetric part of a matrix $m$ and $\mathcal{L}_{\dive} := \sym \circ (\curl \circ P_2^A \circ S_1 + I) \circ P_2^B$. Then there holds
 \begin{align*}
    (\sym \circ \curl \circ P_2^A \circ S_1 \circ P_2^B) \mathcal{P}_{p}(\hat{K}; \mathbb{V}) &\subseteq (\sym \circ \curl \circ P_2^A \circ S_1 ) \mathcal{P}_{p+1}(\hat{K}; \mathbb{M})  \\
    &\subseteq (\sym \circ \curl \circ P_2^A) \mathcal{P}_{p+1}(\hat{K}) \\
    &\subseteq (\sym \circ \curl) \mathcal{P}_{p+2}(\hat{K}; \mathbb{V}) \\
    &\subseteq \sym \mathcal{P}_{p+1}(\hat{K}; \mathbb{M}),
 \end{align*}
and so $\mathcal{L}_{\dive}\mathcal{P}_{p}(\hat{K}; \mathbb{V}) \subseteq \mathcal{P}_{p+1}(\hat{K}; \mathbb{S})$, $p \geq 0$. 

We now show that~\cref{eq:invert-div-ref-element-nobc} follows from~\cref{eq:bgg-homotopy-property} and the boundedness of $P_2^{BGG}$. Since $S_0$ and $S_1$ are surjective, $\ran(S_0)^{\perp} = \ran(S_1)^{\perp} = 0$. Moreover, the identities $\mathcal{N}(S_1) = H^{s+1}(\hat{K}, \mathbb{S})$ and $\mathcal{N}(S_2) = H^{s}(\hat{K}; \mathbb{V})$ follow by inspection. Consequently,
 \begin{equation*}
    A_1 = \begin{pmatrix}
        I & 0 \\
        T_1 \circ \dive & \sym 
    \end{pmatrix}, \quad
    A_2 = \begin{pmatrix}
        I & 0 \\
        0 & I
    \end{pmatrix}, \quad \text{and} \quad B_1 = \begin{pmatrix}
        0 & 0 \\
        \sym \circ \curl & \sym
    \end{pmatrix}.
 \end{equation*}
Unpacking definitions then gives
 \begin{equation*}
    \D_1^{BGG} 
    = \begin{pmatrix}
        0 & 0 \\
        0 & \dive  
    \end{pmatrix} 
    \quad \text{and} \quad 
    P_2^{BGG} 
    = \begin{pmatrix}
        0 & 0 \\
        \sym \circ \curl \circ P_2^A & \mathcal{L}_{\dive}
    \end{pmatrix}.
 \end{equation*}
Consequently, the operator $\mathcal{L}_{\div}$ maps $H^s(\hat{K}; \mathbb{V})$ boundedly into $H^{s+1}(\hat{K}; \mathbb{S})$ since $P_2^{BGG}$ is bounded. The result now follows from~\cref{eq:bgg-homotopy-property}, which gives
 \begin{equation*}
    \begin{pmatrix}
        0 & 0 \\
        0 & \div \circ \mathcal{L}_{\div}
    \end{pmatrix} \begin{pmatrix}
        0 \\ u
    \end{pmatrix} = \begin{pmatrix}
        0 \\ u
    \end{pmatrix},
 \end{equation*}
\hfill\proofbox

 
\subsection{Inverting divergence with pure traction conditions}

We now turn to the construction of the operator $P_2$ in~\cref{thm:poincare-operators-element} with constraints on the boundary traces. The first result concerns the existence of a divergence-free extension of the boundary traces of symmetric tensors whose divergences belong to $\ltwomodrm(\hat{K}; \mathbb{V})$: 
a potential whose $\airy$ stress function has the same normal trace as the argument tensor field, but does not contribute to its divergence.
This will play the role of `correcting' the boundary values of the image of the Poincar\'e operator.
 \begin{lemma}[Divergence-free extension of boundary traces.]\label{lem:airy-correction-operator}
    For $s \geq 0$, let $Y^{s+1}(\hat{K}; \mathbb{S}) := \{ \sigma \in H^{s+1}(\hat{K}; \mathbb{S}) : \dive \sigma \in \ltwomodrm(\hat{K}; \mathbb{V}) \}$. Then there exists a linear operator $\mathcal{L}_{\bn} : Y^1(\hat{K}; \mathbb{S}) \to H^{3}(\hat{K})$ that maps $Y^{s+1}(\hat{K}; \mathbb{S})$ boundedly into $H^{s+3}(\hat{K})$ for all real $s \geq 0$ and satisfies
 \begin{align}\label{eq:airy-correction-properties}
        (\airy \mathcal{L}_{\bn} - I) \tau \bn|_{\partial \hat{K}} = 0 \quad \text{and} \quad \|  \mathcal{L}_{\bn} \tau \|_{s+3, \hat{K}} \leq C \|\tau\|_{s+1, \hat{K}} \qquad \forall~\tau \in Y^{s+1}(\hat{K}; \mathbb{S}),
 \end{align}
    where $C$ depends only on $s$. Moreover, if $\tau \in \mathcal{P}_{p}(\hat{K}; \mathbb{S})$, $p \geq 0$, with $\dive \tau \in \ltwomodrm(\hat{K}; \mathbb{V})$, then  $\mathcal{L}_{\bn} \tau \in \mathcal{P}_{p+2}(\hat{K})$ and~\cref{eq:airy-correction-properties} holds for all $s \geq 0$.
 \end{lemma}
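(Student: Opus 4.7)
The plan is to prescribe Dirichlet data for $q := \mathcal{L}_\bn\tau$ on $\partial\hat K$ so that the identity $(\airy q)\bn = \tau\bn$ is forced edgewise, and then use a polynomial-preserving $H^3$ trace lifting to extend to the interior. From the derivation preceding~\cref{eq:h2gamma-bcs-alt}, on a straight edge $\hat\gamma_i$ with tangent $\bt_i$ and outward unit normal $\bn_i$, $(\airy q)\bn_i|_{\hat\gamma_i} = (\partial_{\bt_i}\partial_{\bn_i}q)\bt_i - (\partial^2_{\bt_i\bt_i}q)\bn_i$, so it suffices to impose $\partial^2_{\bt_i\bt_i}q|_{\hat\gamma_i} = -(\tau\bn_i)\cdot\bn_i$ and $\partial_{\bt_i}\partial_{\bn_i}q|_{\hat\gamma_i} = (\tau\bn_i)\cdot\bt_i$. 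Parameterizing $\hat\gamma_i$ by arclength and integrating twice and once, respectively, yields explicit formulas for $q|_{\hat\gamma_i}$ and $\partial_{\bn_i}q|_{\hat\gamma_i}$ in terms of $\tau\bn_i$, modulo three integration constants per edge (nine total).

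These nine constants are fixed by requiring single-valuedness of $q$ and $\nabla q$ at the three vertices of $\hat K$. Setting $q(\hat a_1) = 0$ and $\nabla q(\hat a_1) = 0$ and propagating along the boundary determines the values at the remaining two vertices; returning to $\hat a_1$ then imposes three residual scalar equations. Integrating by parts edgewise reduces these precisely to the rigid-motion compatibility $\langle\tau\bn, r\rangle_{\partial\hat K} = 0$ for all $r\in\rigidbodies$, which holds for $\tau \in Y^{s+1}(\hat K;\mathbb{S})$ because $\dive\tau\in\ltwomodrm(\hat K;\mathbb{V})$, exactly as in~\cref{eq:image-of-div-orthog}. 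Thus the boundary data $(q|_{\partial\hat K}, \partial_\bn q|_{\partial\hat K})$ is uniquely determined and depends linearly and continuously on $\tau\bn$.

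Finally, applying a polynomial-preserving $H^{s+3}$ trace lift (cf.~\cite{Parker2023}) to this boundary data produces $\mathcal{L}_\bn\tau\in H^{s+3}(\hat K)$, with $(\airy\mathcal{L}_\bn\tau)\bn|_{\partial\hat K} = \tau\bn$ by construction. The bound in~\cref{eq:airy-correction-properties} follows by combining the trace inequality $\|\tau\bn\|_{s+1/2,\partial\hat K}\leq C\|\tau\|_{s+1,\hat K}$ with the continuity of the edge-data construction and the stability of the extension. For polynomial preservation, if $\tau\in\mathcal{P}_p(\hat K;\mathbb{S})$ with $\dive\tau\in\ltwomodrm$, then $q|_{\hat\gamma_i}\in\mathcal{P}_{p+2}(\hat\gamma_i)$ and $\partial_{\bn_i}q|_{\hat\gamma_i}\in\mathcal{P}_{p+1}(\hat\gamma_i)$, so a polynomial-preserving lift returns $\mathcal{L}_\bn\tau\in\mathcal{P}_{p+2}(\hat K)$. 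The crux is securing a single lifting that is simultaneously $H^{s+3}$-stable with $s$-independent constants \emph{and} polynomial-degree-preserving — a generic $H^3$ extension need not have the latter property — and this is precisely the content of the extension operators developed in~\cite{Parker2023}, on which the last step depends.
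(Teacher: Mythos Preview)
Your proposal is correct and follows essentially the same route as the paper: integrate $\tau\bn$ along $\partial\hat K$ to manufacture the Dirichlet and Neumann data for $q$, use orthogonality of $\dive\tau$ to $\rigidbodies$ to close the loop (single-valuedness), and then invoke the polynomial-preserving $H^{s+3}$ lifting of~\cite{Parker2023}. The only place the paper is more careful than your sketch is in explicitly verifying that the constructed pair $(f,g)$ lies in the trace space $X^{s+3}(\partial\hat K)$ of~\cite[Theorem~6.1]{Arnold1988}: beyond edgewise regularity and continuity of $f$ and $\nabla q$, one must check the vertex compatibility integral $\mathcal{I}_i^2(\mathfrak{D}(f,g))<\infty$ when $s=0$ (or the mixed-derivative matching when $s>0$), which the paper reduces to $\partial_\bt\mathfrak{D}(f,g)=\tau\bt$ and hence to $\|\tau\|_{1/2,\partial\hat K}^2$ --- this step is implicit in your ``continuity of the edge-data construction'' but deserves to be spelled out.
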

\begin{proof}
    We recall two key results concerning the traces of $H^{s+3}(\hat{K})$ functions. First, let $X^{s+3}(\partial \hat{K})$ denote the trace space
 \begin{equation}
        X^{s+3}(\partial \hat{K}) := \{ (q|_{\partial \hat{K}}, \partial_{\bn} q|_{\partial \hat{K}}) : q \in H^{s+3}(\hat{K}) \}, \qquad s \geq 0.
 \end{equation}
    Theorem 6.1 of~\cite{Arnold1988} states that $(f, g) \in X^{s+3}(\partial \hat{K})$ if and only if:
    \begin{enumerate}
        \item $f|_{\hat{\gamma}_i} \in H^{s+5/2}(\hat{\gamma}_i)$ and $ g|_{\hat{\gamma}_i} \in H^{s+3/2}(\hat{\gamma}_i)$, $i \in \{1,2,3\}$.
        \item $f$ and $\mathfrak{D}(f, g) := \partial_{\bt} f \bt + g \bn$ are continuous.
        \item For $i \in \{1,2,3\}$, there holds
 \begin{alignat}{2}
            &\mathcal{I}_i^2(\mathfrak{D}(f, g)) < \infty \quad & & \text{if } s = 0, \notag \\
            \label{eq:proof:traces-mixed-derivative}
            &\bt_{i+1} \cdot \partial_{\bt}\mathfrak{D}(f, g)|_{\hat{\gamma}_{i+2}}(\hat{a}_i) = \bt_{i+2} \cdot \partial_{\bt} \mathfrak{D}(f, g) |_{\hat{\gamma}_{i+2}}(\hat{a}_i) \qquad & & \text{if } s > 0,
 \end{alignat}
        where indices are understood modulo 3 and
 \begin{equation*}
            \mathcal{I}_i^2(v) := \int_{0}^{1} \frac{| \bt_{i+1} \cdot \partial_{\bt}v(\hat{a}_i + \delta \bt_{i+2}) - \bt_{i+2} \cdot \partial_{\bt} v(\hat{a}_i - \delta \bt_{i+1}) |^2}{\delta} \ \D \delta.
 \end{equation*}
    \end{enumerate}
    We equip the space $X^{s+3}(\partial \hat{K})$ with the norm
 \begin{equation*}
        \|(f, g)\|_{X^{s+3}(\partial \hat{K})}^2 := \sum_{i=1}^{3} \{ \|f\|_{s+5/2, \hat{\gamma}_i}^2 + \|g\|_{s+3/2, \hat{\gamma}_i}^2 \} + \sum_{i=1}^{3} \begin{cases}
            I^2_i(\mathfrak{D}(f, g)) & \text{if } s = 0, \\
            0 & \text{if } s \geq 0.
        \end{cases}
 \end{equation*}
    
    The second key result~\cite[Theorem 3.2]{Parker2023} shows that there exists an operator $\tilde{\mathcal{L}} : X^3(\partial \hat{K}) \to H^{3}(\hat{K})$ satisfying $\tilde{\mathcal{L}}(f, g) \in H^{s+3} (\hat{K})$ if $(f, g) \in X^{s+3}(\partial \hat{K})$ and
 \begin{equation}\label{eq:proof:hs3-lifting-operator}
        (\tilde{\mathcal{L}}(f, g)|_{\partial \hat{K}}, \partial_{\bn} \tilde{\mathcal{L}}(f, g)|_{\partial \hat{K}}) = (f, g) \quad \text{and} \quad \| \tilde{\mathcal{L}}(f, g) \|_{s+3, \hat{K}} \leq C \|(f, g)\|_{X^{s+3}(\partial \hat{K})},
 \end{equation}
    where $C$ depends only on $s$.
        
    Given $\tau \in Y^{s+1}(\hat{K}; \mathbb{S})$, $s \geq 0$, we construct functions $f$ and $g$ so that $(f, g) \in X^{s+3}(\partial \hat{K})$ and if $q \in H^{s+3}(\hat{K}$) with $(q, \partial_\bn q)|_{\partial \hat{K}} = (f, g)$, then $(\airy q) \bn = \tau \bn$ on $\partial \hat{K}$. To this end, let $v \in L^2(\partial \hat{K}; \mathbb{V})$ be defined by the rule
 \begin{equation*}
        v(x) = \int_{\hat{a}_1}^{x} \tau \bn~\ds, \qquad x \in \partial \hat{K},
 \end{equation*}
    where the path integral is taken counterclockwise around $\partial \hat{K}$. Since $\tau$ satisfies $\tau|_{\hat{\gamma}_i} \in H^{s+1/2}(\hat{\gamma}_i; \mathbb{V})$ and $( \tau \bn, \be_i )_{\partial \hat{K}} = (\dive \tau, \be_i)_{\hat{K}} = 0$ by the divergence theorem,  $v$ satisfies
 \begin{equation}\label{eq:proof:one-integral-trace-reg}
        v \text{ is continuous} \quad \text{and} \quad v|_{\hat{\gamma}_i} \in H^{s+3/2}(\hat{\gamma}_i; \mathbb{V}), \quad i \in \{1,2,3\}.
 \end{equation}
    
    Now define the functions $f, g : \partial \hat{K} \to \mathbb{R}$ by the rules
 \begin{equation*}
        f(x) = \int_{\hat{a}_1}^{x} v \cdot \bn \ \ds \quad \text{and} \quad g(x) = -v(x) \cdot \bt, \qquad x \in \partial \hat{K},
 \end{equation*}
    where the path integral is again taken in the counterclockwise direction. Thanks to the identity $\partial_{\bt} \bx^{\perp} = -\bn$ on $\partial \hat{K}$ and the divergence theorem, there holds
 \begin{equation*}
        (v , \bn )_{\partial \hat{K}} = -( v, \partial_{\bt} \bx^{\perp} )_{\partial \hat{K}} = ( \partial_{\bt} v, \bx^{\perp})_{\partial \hat{K}} = (\tau \bx^{\perp}, \bn )_{\partial \hat{K}} = (\dive \tau, \bx^{\perp})_{\hat{K}} = 0,
 \end{equation*}
    where we used that $\tau$ is symmetric and $\bx^{\perp} \in \rigidbodies$. Consequently,~\cref{eq:proof:one-integral-trace-reg} then gives
 \begin{equation}
        f \text{ is continuous}, \quad f|_{\hat{\gamma}_i} \in H^{s+5/2}(\hat{\gamma}_i), \quad \text{and} \quad g|_{\hat{\gamma}_i} \in H^{s+3/2}(\hat{\gamma}_i), \quad i \in \{1,2,3\},
 \end{equation}
    with the bound
 \begin{equation}\label{eq:proof:traces-edge-bound}
        \sum_{i=1}^{3} \{ \|f\|_{s+5/2, \hat{\gamma}_i}^2 + \|g\|_{s+3/2, \hat{\gamma}_i}^2 \} \leq C \sum_{i=1}^{3} \|v\|_{s+3/2, \hat{\gamma}_i}^2 \leq C \| \tau \|_{s+1, \hat{K}}^2.
 \end{equation}
    Moreover, $\mathfrak{D}(f, g) = \partial_{\bt} f \bt + g \bn = -v^{\perp} = (v_2, -v_1)^{\top}$, so $\mathfrak{D}(f, g)$ is continuous by~\cref{eq:proof:one-integral-trace-reg}. Thanks to the identity $\partial_{\bt} \mathfrak{D}(f, g) = \tau \bt$, there holds
 \begin{multline*}
        \bt_{i+1} \cdot \partial_{\bt}\mathfrak{D}(f, g)(\hat{a}_i + \delta \bt_{i+2}) - \bt_{i+2} \cdot \partial_{\bt} \mathfrak{D}(f, g)(\hat{a}_i - \delta \bt_{i+1}) \\ =  \bt_{i+1} \cdot \tau(\hat{a}_i + \delta \bt_{i+2}) \bt_{i+2} - \bt_{i+2} \cdot \tau(\hat{a}_i - \delta \bt_{i+1}) \bt_{i+1} 
 \end{multline*}
    for $0 \leq \delta \leq 1$ and $i \in \{1,2,3\}$. Consequently, if $s = 0$,
 \begin{equation}\label{eq:proof:traces-vertex-bound}
        \sum_{i=1}^{3} \mathcal{I}^2_i(\mathfrak{D}(f, g)) \leq \sum_{i=1}^{3}\int_{0}^{1} \frac{|\tau(\hat{a}_i + \delta \bt_{i+2}) - \tau(\hat{a}_i - \delta \bt_{i+1}) |^2}{\delta} \D \delta \leq \|\tau\|_{1/2, \partial \hat{K}} < \infty,
 \end{equation}
    while if $s > 0$, $\tau$ is continuous and so~\cref{eq:proof:traces-mixed-derivative} holds. Thus, $(f, g) \in X^{s+3}(\partial \hat{K})$.
    
    We then define the operator $\mathcal{L}_{\bn}$ by the rule $\tau \mapsto \mathcal{L}_{\bn} \tau := \tilde{\mathcal{L}}(f, g).$
    The mapping $\tau \mapsto (f, g)$ is linear by construction and so $\mathcal{L}_{\bn}$ is linear. Moreover,~\cref{eq:proof:traces-edge-bound,eq:proof:traces-vertex-bound} give
 \begin{equation*}
        \| \mathcal{L}_{\bn} \tau \|_{s+3, \hat{K}} \leq C \|(f, g)\|_{X^{s+3}(\partial \hat{K}; \mathbb{S})} \leq C \|\tau\|_{s+1, \hat{K}},
 \end{equation*}
    while direct computation shows that
 \begin{equation*}
        (\airy \mathcal{L}_{\bn} \tau) \bn = \partial_{\bt} (\partial_{\bt} f \bt + g \bn)^{\perp} = \partial_{\bt} v = \tau \bn \qquad \text{on } \partial \hat{K}, 
 \end{equation*}
    which completes the proof of~\cref{eq:airy-correction-properties}.
    
    Now suppose that $\tau \in \mathcal{P}_{p}(\hat{K}; \mathbb{S})$ with $\dive \tau \in \ltwomodrm(\hat{K}; \mathbb{V})$. By construction, there holds $f|_{\hat{\gamma}_i} \in \mathcal{P}_{p+2}(\hat{\gamma}_i)$ and $g|_{\hat{\gamma}_i} \in \mathcal{P}_{p+1}(\hat{\gamma}_i)$ for $1 \leq i \le 3$.
    Since $\tau \in X^s(\hat{K}; \mathbb{S})$ for all $s \geq 0$, $f$ and $\mathfrak{D}(f, g)$ are continuous and~\cref{eq:proof:traces-mixed-derivative} holds. By~\cite[Theorem 3.2]{Parker2023}, we have $\tilde{\mathcal{L}}(f, g) \in \mathcal{P}_{p+2}(\hat{K})$ and~\cref{eq:airy-correction-properties} follows.
\end{proof}

 \begin{lemma}\label{lem:reference-div-inversion}
    \emph{($p$-stable inversion of the divergence on the reference cell.)}
    There exists a linear operator $\tilde{\mathcal{L}}_{\dive} : \ltwomodrm(\hat{K}; \mathbb{V}) \to H^1(\hat{K}; \mathbb{S}) \cap \hdivz{\hat{K}}{\mathbb{S}}$ that maps $H^s(\hat{K}; \mathbb{V}) \cap \ltwomodrm(\hat{K}; \mathbb{V})$ boundedly into $H^{s+1}(\hat{K}; \mathbb{S}) \cap \hdivz{\hat{K}}{\mathbb{S}}$, for all real $s \geq 0$, and satisfies
 \begin{align}
        \label{eq:reference-div-inversion}
        \dive \tilde{\mathcal{L}}_{\dive} = I \quad \text{and} \quad \| \tilde{\mathcal{L}}_{\dive} u \|_{s+1, \hat{K}} \leq C \|u\|_{s, \hat{K}} \quad \forall~u \in H^s(\hat{K}; \mathbb{V}) \cap \ltwomodrm(\hat{K}; \mathbb{V}),
 \end{align}
    where $C$ depends only on $s$. Moreover, if $u \in \mathcal{P}_{p}(\hat{K}; \mathbb{V}) \cap \ltwomodrm(\hat{K}; \mathbb{V})$, $p \geq 0$, then $\tilde{\mathcal{L}}_{\dive} u \in \mathcal{P}_{p+1}(\hat{K}; \mathbb{S}) \cap \hdivz{\hat{K}}{\mathbb{S}}$.
 \end{lemma}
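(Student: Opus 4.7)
The plan is to combine the two operators already constructed: the unconstrained right inverse $\mathcal{L}_{\dive}$ of \cref{lem:invert-div-ref-element-nobc} and the $\airy$-correction $\mathcal{L}_{\bn}$ of \cref{lem:airy-correction-operator}. Given $u \in \ltwomodrm(\hat{K}; \mathbb{V})$, set $\sigma_0 := \mathcal{L}_{\dive} u \in H^{s+1}(\hat{K}; \mathbb{S})$, which satisfies $\dive \sigma_0 = u$ but carries an uncontrolled normal trace on $\partial \hat{K}$. The observation is that $\dive \sigma_0 = u \in \ltwomodrm(\hat{K}; \mathbb{V})$ places $\sigma_0 \in Y^{s+1}(\hat{K}; \mathbb{S})$, so $\mathcal{L}_{\bn}$ applies to it and the tensor $\airy (\mathcal{L}_{\bn} \sigma_0)$ is divergence-free and carries exactly the boundary trace $\sigma_0 \bn$. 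I therefore define
\begin{equation*}
    \tilde{\mathcal{L}}_{\dive} u := \mathcal{L}_{\dive} u - \airy (\mathcal{L}_{\bn} \mathcal{L}_{\dive} u).
\end{equation*}

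The identities in \cref{eq:reference-div-inversion} then follow immediately. Since $\dive \circ \airy \equiv 0$, $\dive \tilde{\mathcal{L}}_{\dive} u = \dive \sigma_0 = u$. The trace relation in \cref{eq:airy-correction-properties} gives $(\airy \mathcal{L}_{\bn} \sigma_0) \bn = \sigma_0 \bn$ on $\partial \hat{K}$, so $(\tilde{\mathcal{L}}_{\dive} u) \bn|_{\partial \hat{K}} = 0$ and hence $\tilde{\mathcal{L}}_{\dive} u \in \hdivz{\hat{K}}{\mathbb{S}}$. Symmetry is inherited because $\mathcal{L}_{\dive}$ is defined through $\sym$ and $\airy$ is automatically symmetric-valued.

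For the continuity bound, I would chain the two estimates: Lemma~\ref{lem:invert-div-ref-element-nobc} yields $\|\sigma_0\|_{s+1, \hat{K}} \leq C \|u\|_{s, \hat{K}}$, and Lemma~\ref{lem:airy-correction-operator} yields $\|\mathcal{L}_{\bn} \sigma_0\|_{s+3, \hat{K}} \leq C \|\sigma_0\|_{s+1, \hat{K}}$. Since $\airy$ is a second-order differential operator, $\|\airy \mathcal{L}_{\bn} \sigma_0\|_{s+1, \hat{K}} \leq C \|\mathcal{L}_{\bn} \sigma_0\|_{s+3, \hat{K}}$, and a single application of the triangle inequality closes the bound $\|\tilde{\mathcal{L}}_{\dive} u\|_{s+1, \hat{K}} \leq C \|u\|_{s, \hat{K}}$ with $C$ depending only on $s$.

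Polynomial preservation is then a bookkeeping exercise. For $u \in \mathcal{P}_p(\hat{K}; \mathbb{V}) \cap \ltwomodrm(\hat{K}; \mathbb{V})$, the polynomial-preservation clause of Lemma~\ref{lem:invert-div-ref-element-nobc} gives $\sigma_0 \in \mathcal{P}_{p+1}(\hat{K}; \mathbb{S})$; the corresponding clause of Lemma~\ref{lem:airy-correction-operator}, applicable because $\dive \sigma_0 = u \in \ltwomodrm(\hat{K}; \mathbb{V})$, gives $\mathcal{L}_{\bn} \sigma_0 \in \mathcal{P}_{p+3}(\hat{K})$, and thus $\airy \mathcal{L}_{\bn} \sigma_0 \in \mathcal{P}_{p+1}(\hat{K}; \mathbb{S})$. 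The difference therefore lies in $\mathcal{P}_{p+1}(\hat{K}; \mathbb{S}) \cap \hdivz{\hat{K}}{\mathbb{S}}$. The construction is essentially ``free'' at this point, since the real work of producing a polynomial-preserving right inverse of $\dive$ and a polynomial-preserving trace lifting has already been packaged into the two preceding lemmas; no step here poses a serious obstacle.
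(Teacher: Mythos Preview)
Your proposal is correct and matches the paper's proof essentially line for line: the paper defines $\tilde{\mathcal{L}}_{\dive} := (I - \airy \mathcal{L}_{\bn})\mathcal{L}_{\div}$, verifies the divergence identity, the vanishing normal trace via~\cref{eq:airy-correction-properties}, the continuity bound by chaining~\cref{eq:invert-div-ref-element-nobc} and~\cref{eq:airy-correction-properties}, and the polynomial preservation exactly as you do. Your additional remark that $\sigma_0 \in Y^{s+1}(\hat{K}; \mathbb{S})$ is needed for $\mathcal{L}_{\bn}$ to apply is a point the paper leaves implicit.
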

\begin{proof}
    Let $\mathcal{L}_{\div}$ be the operator in~\cref{lem:invert-div-ref-element-nobc} restricted to the space $\ltwomodrm(\hat{K}; \mathbb{V})$ and $\mathcal{L}_{\bn}$ the operator in~\cref{lem:airy-correction-operator}. Since $\dive \mathcal{L}_{\div} = I$, the composition $\mathcal{L}_{\bn} \circ  \mathcal{L}_{\div}$ is well-defined. Consequently, we define $\tilde{\mathcal{L}}_{\dive} : \ltwomodrm(\hat{K}; \mathbb{V}) \to L^2(\hat{K}; \mathbb{S})$ by the 
    divergence-free boundary correction $\tilde{\mathcal{L}}_{\dive} := (I - \airy \mathcal{L}_{\bn} )\mathcal{L}_{\div}.$

    Let $s \geq 0$ and $u \in H^s(\hat{K}; \mathbb{V}) \cap \ltwomodrm(\hat{K}; \mathbb{V})$. Inequalities \cref{eq:airy-correction-properties} and \cref{eq:invert-div-ref-element-nobc} then give
 \begin{equation*}
        \| \tilde{\mathcal{L}}_{\dive} u\|_{s+1, \hat{K}} \leq \|  \mathcal{L}_{\div} u \|_{s+1, \hat{K}} + \|\mathcal{L}_{\bn} \mathcal{L}_{\div} u\|_{s+3, \hat{K}} \leq C \|u\|_{s, \hat{K}}.
 \end{equation*}
    Consequently, $\tilde{\mathcal{L}}_{\dive} u \in H^{s+1}(\hat{K}; \mathbb{V})$. Moreover, $\dive \tilde{\mathcal{L}}_{\dive} u = \dive \mathcal{L}_{\div} u = u$ and
 \begin{equation*}
        (\tilde{\mathcal{L}}_{\dive} u) \bn = (\mathcal{L}_{\div} u) \bn  - (\airy \mathcal{L}_{\bn} \mathcal{L}_{\div} u) \bn  = 0 \qquad \text{on } \partial \hat{K}
 \end{equation*}
    by~\cref{eq:airy-correction-properties}. Thus, $\tilde{\mathcal{L}}_{\dive}$ maps $H^s(\hat{K}; \mathbb{V}) \cap \ltwomodrm(\hat{K}; \mathbb{V})$ boundedly into $H^{s+1}(\hat{K}; \mathbb{S}) \cap \hdivz{\hat{K}}{\mathbb{S}}$ and~\cref{eq:reference-div-inversion} holds. Moreiver, if $u \in \mathcal{P}_{p}(\hat{K}; \mathbb{V}) \cap \ltwomodrm(\hat{K}; \mathbb{V})$, $p \geq 0$, then $\mathcal{L}_{\div} u \in \mathcal{P}_{p+1}(\hat{K}; \mathbb{S})$ by~\cref{lem:invert-div-ref-element-nobc} and $\mathcal{L}_{\bn} \mathcal{L}_{\div} u \in \mathcal{P}_{p+3}(\hat{K})$ by~\cref{lem:airy-correction-operator}. Consequently, $\tilde{\mathcal{L}}_{\dive} u \in \mathcal{P}_{p+1}(\hat{K}; \mathbb{S}) \cap \hdivz{\hat{K}}{\mathbb{S}}$.
\end{proof}

 
\subsection{Proof of~\cref{thm:poincare-operators-element}}\label{sec:proof-poincare-element}

By~\cref{lem:exact-sequence-continuous-disp}, we have that~\cref{eq:exact-sequence-continuous-gen} is exact and so there exists a linear operator
 \begin{equation*}
    \airy^{-1} : \{ \tau \in \hdivz{\hat{K}}{\mathbb{S}} : \dive \tau \equiv 0 \} \to H_0^2(\hat{K})
 \end{equation*}
satisfying $\airy^{-1} \airy = I$ and $\airy \airy^{-1} = I$. Let $\sigma \in H^s(\hat{K}; \mathbb{S}) \cap \{ \tau \in \hdivz{\hat{K}}{\mathbb{S}} : \dive \tau \equiv 0 \}$; then there holds
 \begin{equation}\label{eq:proof:airy-inverse-bounded}
    \| \airy^{-1} \sigma \|_{s+2, \hat{K}} \leq C \|\airy \airy^{-1} \tau\|_{s, \hat{K}} = C \|\tau\|_{s, \hat{K}},
 \end{equation}
where we used Poincar\'{e}'s inequality in the first step. Consequently, $\airy^{-1}$ is a bounded operator from $ H^s(\hat{K}; \mathbb{S}) \cap \{ \tau \in \hdivz{\hat{K}}{\mathbb{S}} : \dive \tau \equiv 0 \}$ to $ H^2_0(\hat{K})$. If, additionally, $\sigma \in \mathcal{P}_{p}(\hat{K}; \mathbb{S})$, then
 \begin{equation*}
    \airy^{-1} \sigma \in \{ \tau \in H^2_0(\hat{K}) : \airy \tau \in \mathcal{P}_{p}(\hat{K}; \mathbb{S}) \} = \mathcal{P}_{p+2}(\hat{K}) \cap H^2_0(\hat{K}).
 \end{equation*}
Define $P_1 := \airy^{-1}(I - P_2 \dive)$ and $P_2 = \tilde{\mathcal{L}}_{\div}$.
By~\cref{lem:reference-div-inversion}, $\dive P_2 = I$, and so $P_1$ is well-defined since $\dive(I - P_2 \dive) = 0$. The properties~\cref{eq:poincare-operators-element-homotopy,eq:poincare-operators-element-poly-pres} now follow by construction, while~\cref{eq:poincare-operators-element-cont} follows from~\cref{eq:proof:airy-inverse-bounded,eq:reference-div-inversion}. 
\hfill \proofbox

 
\section{Inverting divergence on a mesh}\label{sec:invert-div-mesh}

We now construct an inverse of the divergence operator on a mesh. For the results in~\cref{sec:commuting-projections,sec:stable-decomp}, we require an extension of~\cref{thm:invert-div-fem}. In particular, we fix the following basis for $\rigidbodies$
 \begin{equation}\label{eq:rm-basis}
    r_1 := (1~~0)^T, \quad r_2 := (0~~1)^T, \quad \text{and} \quad r_3 := x^{\perp}.
 \end{equation}
Then the main result of this section is the following:
 \begin{theorem}[Flux and divergence recovery.]\label{thm:invert-div-fem-gen}
    Let $p \geq 3$. For every $u \in \ltwospace_{\Gamma}^{p-1}$ and $\omega \in \mathbb{R}^{J_N \times 3}$ satisfying
 \begin{equation}\label{eq:omega-sum-conditions}
        \sum_{j=1}^{J_N} \omega_{j, l} = \int_{\Omega} u \cdot r_l \ \dx, \qquad 1 \leq l \leq 3,
 \end{equation}
    there exists $\sigma \in \hdivspace_{\Gamma}^{p}$ such that
 \begin{subequations}
        \label{eq:global-div-inverse-gen}
        \begin{alignat}{2}
            \dive \sigma &= u, \qquad & & \\
            \int_{\Gamma_{N, j}} (\sigma \bn) \cdot r_l~\ds &= \omega_{j, l}, \qquad & & 1 \leq j \leq J_N, \ 1 \leq l \leq 3, \\
            \|\sigma\|_{\dive, \Omega} &\leq C( \|u\|_{0, \Omega} + |\omega|), \qquad & &
        \end{alignat}
 
    \end{subequations}
    where $\{ \Gamma_{N, j} : 1 \leq j \leq J_N \}$ are the $J_N$ connected components of $\Gamma_N$, $|\omega| = \sum_{j ,l} |\omega_{jl}|$,  and $C$ is independent of $h$ and $p$.
 \end{theorem}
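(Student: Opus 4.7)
The necessity of~\cref{eq:omega-sum-conditions} follows by integration by parts: for any $r \in \rigidbodies$, the identity $\int_{\Omega} \dive\sigma \cdot r\,\dx = \sum_{j=1}^{J_N}\int_{\Gamma_{N,j}} (\sigma\bn)\cdot r\,\ds$, combined with $\dive\sigma = u$, $\sigma\bn|_{\Gamma_D} = 0$, and the prescribed fluxes, yields $\int_{\Omega} u \cdot r_l\,\dx = \sum_j \omega_{j,l}$. For existence, my plan is to split $\sigma = \sigma_\omega + \sigma^\perp + \sigma^{\mathrm{RBM}}$, where $\sigma_\omega$ realises the prescribed boundary fluxes (with controllable divergence), while $\sigma^\perp$ and $\sigma^{\mathrm{RBM}}$ cellwise invert the remaining divergence on its ``$L^2 \perp \rigidbodies$'' and piecewise-rigid-body components, respectively.

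For the flux-lift $\sigma_\omega \in \hdivspace_\Gamma^p$, I would combine two ingredients that together span all admissible flux configurations. First, discrete harmonic forms available from~\Cref{lem:sigma-projection-dofs} (of dimension $3|\mathfrak{I}^*|$) realise fluxes through entire boundary components $\partial\Omega_m$ that meet $\Gamma_N$, accounting for the global compatibility redundancy. Second, for fluxes distributed among sub-arcs $\Gamma_{N,j}$ within a single boundary component, I would use tensors of the form $\airy q$ with $q \in \htwospace_\Gamma^{p+2}$: indeed $(\airy q)\bn = \partial_{\bt}(\grad q)^\perp$, so $\int_{\Gamma_{N,j}} (\airy q)\bn \cdot r_l\,\ds$ reduces to endpoint values of $q$ and $\grad q$ at the transition points between $\Gamma_N$ and $\Gamma_D$, and by~\cref{eq:h2gamma-bcs} these may be prescribed independently on each connected component of $\Gamma_D$. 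The function $q$ is then assembled from its prescribed boundary data via the $hp$-stable polynomial extension operators of~\cite{Parker2023}, giving $\|q\|_{2,\Omega} \leq C|\omega|$ and hence $\|\sigma_\omega\|_{\dive,\Omega} \leq C|\omega|$.

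Setting $\tilde u := u - \dive\sigma_\omega \in \ltwospace^{p-1}$, the compatibility condition forces $\int_{\Omega} \tilde u \cdot r_l\,\dx = 0$ for each $l$, so $\tilde u \in \ltwospace^{p-1} \cap L_\Gamma^{2}(\Omega; \mathbb{V})$. I would then invert $\dive(\sigma^\perp + \sigma^{\mathrm{RBM}}) = \tilde u$ with zero boundary fluxes as follows: on each $K \in \mathcal{T}$, decompose $\tilde u|_K = \tilde u_K^\perp + \bar u_K$, where $\bar u_K \in \rigidbodies$ is the $L^2(K)$-projection, apply~\Cref{cor:single-div-inversion} to $\tilde u_K^\perp \in \ltwomodrm(K; \mathbb{V})$ to produce $\sigma^\perp_K \in \mathcal{P}_p(K;\mathbb{S})\cap \hdivz{K}{\mathbb{S}}$, and glue into $\sigma^\perp \in \hdivspace_\Gamma^p$ with $\|\sigma^\perp\|_{\dive,\Omega} \leq C\|\tilde u\|_{0,\Omega}$ and $\sigma^\perp \bn \equiv 0$ on every mesh edge. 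Finally, since $\int_{\Omega} \bar u \cdot r_l\,\dx = 0$, I would build a rigid-body compensator $\sigma^{\mathrm{RBM}} \in \hdivspace_\Gamma^p$ realising $\dive\sigma^{\mathrm{RBM}} = \bar u$ with zero boundary fluxes by a spanning-tree transport on the dual graph of $\mathcal{T}$: for each cell one routes the triple $\bar u_K \in \rigidbodies$ along a path to a root, transferring the charge through interior mesh edges with polynomial edge extensions from~\cite{Parker2023}. Summing the three pieces then yields~\cref{eq:global-div-inverse-gen} with the stated bound.

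The hard part is the $hp$-uniformity at every step where prescribed polynomial traces must be extended into the interior — namely, building $q$ from its endpoint data in the $\airy q$ correction, and carrying rigid-body charges across edges in the tree transport — both of which rest on the $p$-robust polynomial extension operators of~\cite{Parker2023}.
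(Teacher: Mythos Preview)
Your overall decomposition is reasonable in spirit, but there are two genuine gaps.

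\textbf{Circularity.} You invoke \Cref{lem:sigma-projection-dofs} to supply discrete harmonic forms for the flux-lift $\sigma_\omega$. In the paper, however, \Cref{lem:sigma-projection-dofs} is proved via \Cref{lem:projection-matching}, which in turn invokes both \Cref{thm:invert-div-fem} and \Cref{thm:invert-div-fem-gen}. So your argument would be circular. You would need an independent construction of divergence-free tensors with prescribed fluxes through the $\partial\Omega_m$.

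\textbf{Loss of $h$-uniformity in the tree transport.} The spanning-tree step for the piecewise-rigid-body part $\bar u$ does not give $h$-independent bounds as written. The accumulated charge passing through an interior edge $\gamma$ is $c_\gamma = \sum_{K' \in S_\gamma} \int_{K'} \bar u \cdot r_l\,\dx$, where $S_\gamma$ is the subtree cut off by $\gamma$. Cauchy--Schwarz only gives $|c_\gamma| \leq |\Omega|^{1/2}\|\bar u\|_{0,\Omega}$, with no decay in the number of elements in $S_\gamma$. Realising $c_\gamma$ as an edge flux via the DOFs in \Cref{lem:hdiv-global-dofs} (by the norm equivalence used in the proof of \Cref{lem:coarse-div-interp}) costs $\|\sigma_\gamma\|_{0,K} \leq C|c_\gamma|$ on the adjacent cells, and summing over the $O(h^{-2})$ tree edges yields $\|\sigma^{\mathrm{RBM}}\|_{0,\Omega}^2 \leq C h^{-2}\|\bar u\|_{0,\Omega}^2$. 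The constant blows up under mesh refinement.

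The paper avoids both issues by reversing the order: it first solves the problem at the continuous level (\Cref{cor:h1-inversion-div-bc-avg} gives $\tau \in H^1(\Omega;\mathbb{S}) \cap \hdivgamma{\Omega}{\mathbb{S}}$ with $\dive\tau = u$ and the prescribed $\Gamma_{N,j}$-fluxes), then takes a Scott--Zhang interpolant plus a low-order ($p=3$) edge-moment correction to land in $\hdivspace_\Gamma^3$ while matching the fluxes \emph{and} the cellwise $\rigidbodies$-moments of $u$ simultaneously (\Cref{lem:coarse-div-interp}). This coarse interpolant already satisfies $(u-\dive\tau)|_K \perp \rigidbodies$ on every $K$, so a single application of \Cref{cor:single-div-inversion} per element finishes the job. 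The $h$-uniformity comes for free from standard interpolation estimates; no graph-combinatorial argument is needed, and no forward reference to the cohomology is made.
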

The proof of~\cref{thm:invert-div-fem-gen} is in~\cref{sec:proof-invert-div-fem-gen}.

 
\subsection{Auxiliary coarse interpolant}

The following result will be useful in constructing an inverse of the divergence operator on a mesh:
 \begin{lemma}\label{lem:coarse-div-interp}
    Let $p \geq 3$. For every $u \in \ltwospace_{\Gamma}^{p-1}$ and $\omega \in \mathbb{R}^{J_N \times 3}$ satisfying~\cref{eq:omega-sum-conditions}, there exists $\sigma \in \hdivspace_{\Gamma}^{3}$ such that
 \begin{subequations}
        \label{eq:coarse-div-interp}
        \begin{alignat}{2}
        \int_{K} (\dive \sigma) \cdot r~\dx &= \int_{K} u \cdot r~\dx \qquad & &\forall~r \in \rigidbodies, \ \forall~K \in \mathcal{T}, \\
        \int_{\Gamma_{N, j}} (\sigma \bn) \cdot r_l \ \ds &= \omega_{j, l}, \qquad & &1 \leq j \leq J_N, \ 1 \leq l \leq 3, \\ 
         \|\sigma\|_{\dive, \Omega} &\leq C \left( \|u\|_{0, \Omega} + |\omega| \right) , \qquad & &
        \end{alignat}
 \end{subequations}
    where $C$ is independent of 
    $h$ and $p$.
 \end{lemma}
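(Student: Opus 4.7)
Since $\hdivspace_\Gamma^3$ is the fixed lowest-order space in the scale, $p$-independence is automatic and only $h$-independence of $C$ requires work. My strategy is to strengthen \Cref{thm:h1-inversion-div-bc} to prescribe individual boundary fluxes and then interpolate the resulting continuous tensor into $\hdivspace_\Gamma^3$ through the Hu--Zhang DOFs of \Cref{lem:hdiv-local-dofs}. The key observation is that for any $\sigma \in \hdivspace_\Gamma^3$ and $r \in \rigidbodies$, integration by parts with $\varepsilon(r) = 0$ yields $\int_K (\dive\sigma)\cdot r\,dx = \int_{\partial K}(\sigma\bn_K)\cdot r\,ds$; since $r|_\gamma \in \mathcal{P}_1(\gamma;\mathbb{V})$ on every edge $\gamma$, the right-hand side is a linear combination of the Hu--Zhang edge DOFs, and $\int_{\Gamma_{N,j}} \sigma\bn\cdot r_l\,ds$ is a sum of such edge moments. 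Hence both conditions in~\cref{eq:coarse-div-interp} are captured by matching edge moments against $\mathcal{P}_1(\gamma;\mathbb{V})$.

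\textbf{Construction.} First, I would strengthen \Cref{thm:h1-inversion-div-bc} to produce $\tilde\sigma \in H^1(\Omega;\mathbb{S}) \cap \hdivgamma{\Omega}{\mathbb{S}}$ satisfying $\dive\tilde\sigma = u$ (after subtracting the $\rigidbodies$-projection of $u$ when $|\Gamma| = |\Gamma_D|$), the individual boundary-component fluxes $\int_{\Gamma_{N,j}} \tilde\sigma\bn\cdot r_l\,ds = \omega_{j,l}$, and $\|\tilde\sigma\|_{1,\Omega} \leq C(\|u\|_{0,\Omega} + |\omega|)$. One obtains this by invoking \Cref{thm:h1-inversion-div-bc} to get a provisional $\tilde\sigma_0$ with correct divergence (whose boundary fluxes $\omega^{(0)}_{j,l}$ satisfy $\sum_j(\omega_{j,l} - \omega^{(0)}_{j,l}) = 0$ by compatibility) and correcting it by $\airy q$ for a suitable $q \in H^3(\Omega)$ chosen using the identity $(\airy q)\bn = \partial_\bt(\partial_\bn q\, \bt - \partial_\bt q\, \bn)$ and a stable boundary lift into $H^3$ in the spirit of~\cite[Theorem 3.2]{Parker2023}. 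Second, define $\sigma := \Pi^\star \tilde\sigma \in \hdivspace_\Gamma^3$ by setting all vertex and interior DOFs to zero and matching edge DOFs,
\begin{equation*}
\int_\gamma (\Pi^\star\tilde\sigma)\bn\cdot q\,ds = \int_\gamma \tilde\sigma\bn\cdot q\,ds \qquad \forall~q \in \mathcal{P}_1(\gamma;\mathbb{V}).
\end{equation*}
These moments are well-defined since $H^1$ edge traces lie in $L^2$, and the condition on $\Gamma_D$ is inherited from $\tilde\sigma$. A scaling argument on the reference triangle delivers $\|\Pi^\star\tilde\sigma\|_{\dive,K} \leq C\|\tilde\sigma\|_{1,K}$ with $C$ depending only on shape regularity, and by the key observation $\Pi^\star\tilde\sigma$ then satisfies both conditions in~\cref{eq:coarse-div-interp}, completing the proof.

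\textbf{Main obstacle.} The principal technical hurdle is the strengthened continuous inversion with prescribed individual boundary fluxes. Constructing the divergence-free correction $\airy q$ reduces to a stable 2D Airy lift with prescribed edge data compatible across vertices, which is nontrivial in the presence of mixed boundary conditions on a non-simply connected $\Omega$; one would adapt the trace characterization of $H^3(\Omega)$ from~\cite{Arnold1988,Parker2023} to this mixed-boundary setting, keeping careful track of the $(3 J_N - 3)$-dimensional space of admissible boundary flux corrections modulo compatibility. If this continuous lift proves delicate to formalize, an alternative would be to bypass it entirely by handling the boundary fluxes discretely via localized single-element correctors built from \Cref{lem:hdiv-local-dofs} and absorbing their rigid-body divergence contributions into the data passed to \Cref{thm:h1-inversion-div-bc}.
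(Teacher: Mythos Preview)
Your first step—producing a continuous $\tilde\sigma\in H^1(\Omega;\mathbb S)\cap\hdivgamma{\Omega}{\mathbb S}$ with $\dive\tilde\sigma=u$, prescribed fluxes $\omega_{j,l}$, and the bound $\|\tilde\sigma\|_{1}\le C(\|u\|_0+|\omega|)$—is exactly what the paper does (it is their \Cref{cor:h1-inversion-div-bc-avg}, obtained not via an Airy lift but via a localized $H^1$ trace lifting as in \Cref{lem:h1-rigid-interp} followed by \Cref{lem:h1-inversion-div}). So the ``main obstacle'' you flag is already available.

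The genuine gap is in your interpolation step. Setting all vertex DOFs of $\Pi^\star\tilde\sigma$ to zero and matching only the edge moments of the \emph{full} tensor $\tilde\sigma$ does \emph{not} give an $h$-uniform bound. With vertex DOFs zero, the norm equivalence on $\mathcal P_3(K;\mathbb S)$ (cf.~\cref{eq:proof:rho continuity}) yields
\[
h_K^{-1}\|\Pi^\star\tilde\sigma\|_{0,K}+\|\dive\Pi^\star\tilde\sigma\|_{0,K}\le C\,h_K^{-1/2}\|\tilde\sigma\bn\|_{0,\partial K},
\]
and the scaled trace inequality then gives only
\[
\|\Pi^\star\tilde\sigma\|_{\dive,K}\le C\bigl(h_K^{-1}\|\tilde\sigma\|_{0,K}+|\tilde\sigma|_{1,K}\bigr).
\]
Summing produces $\sum_K h_K^{-2}\|\tilde\sigma\|_{0,K}^2$, which blows up as $h\to 0$ for any fixed nonzero $\tilde\sigma\in H^1$. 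So the claimed bound $\|\Pi^\star\tilde\sigma\|_{\dive,K}\le C\|\tilde\sigma\|_{1,K}$ with $C$ independent of $h$ is false.

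The paper's fix is precisely to split $\tilde\sigma$ into a Scott--Zhang interpolant $\tau_1$ (piecewise linear, $H^1$-stable, vertex values well defined by local averaging) plus an edge-moment correction $\rho$ built from the \emph{difference} $\tilde\sigma-\tau_1$. The Scott--Zhang piece is bounded directly in $H^1$; for $\rho$ the approximation property $h_K^{-1/2}\|\tilde\sigma-\tau_1\|_{0,\partial K}\le C\|\tilde\sigma\|_{1,\mathcal T_K}$ supplies exactly the missing factor of $h_K^{1/2}$. Without this subtraction (or an equivalent device handling vertex regularity), your scaling argument cannot close.
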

\begin{proof}
    Let $u \in \ltwospace_{\Gamma}^{p-1}$ and $\omega \in \mathbb{R}^{J_N \times 3}$ satisfying~\cref{eq:omega-sum-conditions} be given. Thanks to~\cref{cor:h1-inversion-div-bc-avg}, there exists $\tau \in H^1(\Omega; \mathbb{S}) \cap \hdivgamma{\Omega}{\mathbb{S}}$ satisfying $\dive \tau = u$, $\int_{\Gamma_{N, j}} (\tau \bn) \cdot r_l = \omega_{j, l}$ for $1 \leq j \leq J_N$, $1 \leq l \leq 3$, and $\|\tau\|_{1, \Omega} \leq C ( \|u\|_{0, \Omega} + |\omega|)$. Let $\tau_1 \in \{ \rho \in H^1(\Omega; \mathbb{S}) \cap \hdivgamma{\Omega}{\mathbb{S}}:\rho|_K \in \mathcal{P}_1(K; \mathbb{S}) \ \forall~K \in \mathcal{T} \}$ be the componentwise linear Scott--Zhang interpolant~\cite{Scott1990} of $\tau$, which satisfies
 \begin{equation}\label{eq:proof:scott zhang stability}
        \| \tau_1 \|_{1, K} + h_K^{-1/2} \| \tau - \tau_1 \|_{0, \partial K}  \leq C \|\tau\|_{1, \mathcal{T}_K} \qquad \forall~K \in \mathcal{T},
 \end{equation}
    where $\mathcal{T}_K$ is the patch of elements sharing an edge or vertex with $K$. 
    
    We now define $\rho \in \Sigma^{3}$ by assigning the following degrees of freedom:
 \begin{equation}\label{eq:proof:rho interp edge momoments}
        \int_{\gamma} (\rho \bn) \cdot r \ \ds = \int_{\gamma} (\tau - \tau_1) \bn \cdot r \ \ds \qquad \forall~r \in \mathcal{P}_{2}(\gamma; \mathbb{V}),
 \end{equation}
    while the remaining degrees of freedom in~\cref{lem:hdiv-global-dofs} vanish. In particular, $\rho \in \hdivspace_{\Gamma}^{3}$.
    A standard norm equivalence argument on the finite dimensional space $\mathcal{P}_{3}(K; \mathbb{S})$ gives
 \begin{align}
        h_K^{-1} \| \rho \|_{0, K} + \|\dive \rho\|_{0, K} \leq C h_K^{-1/2} \sup_{\substack{ \gamma \in \mathcal{E}_K \\ r \in \mathcal{P}_{2}(\gamma; \mathbb{V}) \\ \|r\|_{0, \gamma} = 1  } } \left| \int_{\gamma} (\rho \bn) \cdot r \ \ds \right| 
        &\leq C h_K^{-1/2} \| \tau - \tau_1\|_{0, \partial K}  \notag \\
        \label{eq:proof:rho continuity}
        &\leq C \|\tau\|_{1, \mathcal{T}_K}.
 \end{align}

    Let $\sigma := \tau_1 + \rho$. Then $\sigma \in \hdivspace_{\Gamma}^{3}$ 
    by construction, and using the divergence theorem and~\cref{eq:proof:rho interp edge momoments}, we obtain
 \begin{equation}
        \int_{K} \dive \sigma \cdot r~\dx = \int_{\partial K} \sigma \bn \cdot r~\ds =  \sum_{\gamma \in \mathcal{E}_K} \int_{\gamma} \tau \bn \cdot r~\ds = \int_{K} \dive \tau \cdot r~\dx = \int_{K} u \cdot r~\dx,
 \end{equation}
    for all $r \in \rigidbodies$, and $(\sigma \bn, r_l)_{\Gamma_{N, j}} = (\tau \bn, r_l)_{\Gamma_{N, j}} = \omega_{j, l}$
    for $1 \leq j \leq J_N$, $1 \leq l \leq 3$. Moreover, ~\cref{eq:proof:scott zhang stability,eq:proof:rho continuity} give
 \begin{equation*}
        \sum_{K \in \mathcal{T}} \|\sigma\|_{\dive, K}^2 \leq 2 \sum_{K \in \mathcal{T}} (\|\tau_1\|_{\dive, K}^2 + \|\rho\|_{\dive, K}^2) \leq C \sum_{K \in \mathcal{T}} \|\tau\|_{1,\mathcal{T}_K}^2 \leq C \|\tau\|_{1, \Omega}^2.
 \end{equation*}
  Properties \cref{eq:coarse-div-interp} now follow.
\end{proof}

 
\subsection{Proof of~\cref{thm:invert-div-fem-gen}}\label{sec:proof-invert-div-fem-gen}

Let $u \in \ltwospace_{\Gamma}^{p - 1}$ and $\omega \in \mathbb{R}^{J_N \times 3}$ satisfying~\cref{eq:omega-sum-conditions} be given. Let $\tau \in \hdivspace_{\Gamma}^{3}$ be given by~\cref{lem:coarse-div-interp}. On each $K \in \mathcal{T}$, the function $u_K := (u - \dive \tau)|_{K} \in \mathcal{P}_{p-1}(K; \mathbb{V}) \cap \ltwomodrm(K; \mathbb{V})$, and so there exists $\sigma_K \in \mathcal{P}_{p}(K; \mathbb{S}) \cap \hdivz{K}{\mathbb{S}}$ satisfying $\dive \sigma_K = u_K$ and $h_K^{-1} \|\sigma_K\|_{0,K} + |\sigma_K|_{1,K} \leq C \|u_K\|_{K}$ thanks to~\cref{cor:single-div-inversion}. 

Define $\sigma$ by the rule $\sigma|_{K} = \tau + \sigma_K$. Then $\sigma \in \Sigma_{\Gamma}^{p}$. Moreover, $\dive \sigma|_{K} = \dive \tau|_{K} + \dive \sigma_K = u|_{K}$ on $K \in \mathcal{T}$, while~\cref{eq:single-div-inversion} and shape regularity give
 \begin{align*}
    \sum_{K \in \mathcal{T}} \|\sigma\|_{0, K}^2 \leq C \sum_{K \in \mathcal{T}} (\|\tau\|_{0, K}^2 + h_K^2 \| u - \dive \tau\|_{0, K}^2) &\leq C \sum_{K \in \mathcal{T}} (\|\tau\|_{\dive, K}^2 + \|u\|_{0, K}^2) \\
    &\leq C (\|\tau\|_{\dive, \Omega}^2 + \|u\|_{0, \Omega}^2).
 \end{align*}
\Cref{eq:global-div-inverse-gen} now follows from~\cref{eq:coarse-div-interp}. 
\hfill\proofbox

 
\subsection{Proof of~\cref{thm:invert-div-fem}}

Let $u \in \ltwospace_{\Gamma}^{p-1}$ be given. If $|\Gamma_N| = 0$, then~\cref{thm:invert-div-fem-gen} is the same statement as~\cref{thm:invert-div-fem}. If $\Gamma_N$ has at least one connected component, then we set $\omega_{1, l} = \int_{\Omega} u \cdot r_l \ \dx$ and $\omega_{j, l} = 0$ for $2 \leq j \leq J_N$, $1 \leq l \leq 3$ and apply~\cref{thm:invert-div-fem-gen}. The inequality in~\cref{eq:global-div-interp} now follows on noting that $|\omega| \leq \|u\|_{0, \Omega}$. Consequently, for every $u \in \ltwospace_{\Gamma}^{p-1}$, there holds
 \begin{equation*}
    \sup_{ 0 \neq \sigma \in  \hdivspace_{\Gamma}^{p}} \frac{ (\dive \sigma, u) }{ \|\sigma\|_{\dive, \Omega} \|u\|_{0, \Omega}  } \geq C^{-1} > 0.
 \end{equation*}
Inequality~\cref{eq:inf-sup-def} now follows. 
\hfill\proofbox

 
\section{Bounded cochain projections and Hodge decompositions}\label{sec:cochain}

In this section, we prove~\cref{lem:sigma-projection-dofs},~\cref{thm:commuting-diagram}, and~\cref{thm:stable-decomposition-low-airy-div}. The first step is to construct a tensor in $\hdivspace_{\Gamma}^{p}$ with the given degrees of freedom in~\cref{lem:sigma-projection-dofs}:
 \begin{lemma}\label{lem:projection-matching}
    Let $p \geq 3$. For every $q \in \htwospace_{\Gamma}^{p+2}$, $v \in \ltwospace_{\Gamma}^{p-1}$, and $\kappa \in \mathbb{R}^{|\mathfrak{I}^*| \times 3}$, there exists a unique $\sigma \in \hdivspace_{\Gamma}^{p}$ such that
 \begin{subequations}\label{eq:projection-matching}
        \begin{align}
            (\sigma, \airy s) &= (\airy q, \airy s) \qquad & &\forall~s \in \htwospace_{\Gamma}^{p+2}, \\
            (\dive \sigma, u) &= (v, u) \qquad & & \forall~u \in \ltwospace_{\Gamma}^{p-1}, \\
            \langle \sigma \bn, r_l \rangle_{\partial \Omega_m} &= \kappa_{m, l} \qquad & &\forall~m \in \mathfrak{I}^*, \ 1 \leq l \leq 3,
        \end{align}
 \end{subequations}
    where $\{r_l\}$ are defined in~\cref{eq:rm-basis}. Moreover, $\sigma$ satisfies
 \begin{equation}\label{eq:projection-matching-stability}
        \| \sigma\|_{\dive, \Omega} \leq C \left( \|\airy q\|_{0, \Omega} + \|v\|_{0, \Omega} + |\kappa| \right),
 \end{equation}
    where $C$ is independent of $h$ and $p$. 
 \end{lemma}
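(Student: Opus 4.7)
The strategy is to construct $\sigma$ in the form $\sigma = \sigma_0 + \airy \tilde q$, where $\sigma_0 \in \hdivspace_{\Gamma}^{p}$ is built from $(v, \kappa)$ using~\Cref{thm:invert-div-fem-gen} to realize conditions (ii) and (iii), while $\tilde q \in \htwospace_\Gamma^{p+2}$ is chosen as an $L^2$-projection to realize condition (i) without disturbing (ii) or (iii).

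To build $\sigma_0$, I first translate the moment data $\kappa$ (indexed by boundary components $\partial\Omega_m$ with $m \in \mathfrak{I}^*$) into flux data $\omega \in \mathbb{R}^{J_N \times 3}$ indexed by the connected components of $\Gamma_N$, as required by~\Cref{thm:invert-div-fem-gen}. Setting $J(m) := \{j : \Gamma_{N,j} \subset \partial\Omega_m\}$, I would choose any $\omega$ satisfying $\sum_{j \in J(m)} \omega_{j,l} = \kappa_{m,l}$ for each $m \in \mathfrak{I}^*$, distributing the remaining mass over $J(\min \mathfrak{I})$ so that $\sum_{j=1}^{J_N} \omega_{j,l} = (v, r_l)_{\Omega}$, which is the compatibility condition of~\Cref{thm:invert-div-fem-gen}; this is consistent and yields $|\omega| \leq C(\|v\|_{0,\Omega} + |\kappa|)$. (In the degenerate case $\mathfrak{I} = \emptyset$ the right-hand side $(v, r_l)_\Omega$ vanishes automatically, since then $\ltwospace_\Gamma^{p-1} \subset L^2_\Gamma(\Omega; \mathbb{V})$ is $\rigidbodies$-orthogonal.) Applying~\Cref{thm:invert-div-fem-gen} then produces $\sigma_0$ with $\dive \sigma_0 = v$, $\int_{\Gamma_{N,j}}(\sigma_0 \bn)\cdot r_l\,\ds = \omega_{j,l}$, and $\|\sigma_0\|_{\dive,\Omega} \leq C(\|v\|_{0,\Omega}+|\kappa|)$; summing over $j \in J(m)$ gives $\langle \sigma_0 \bn, r_l\rangle_{\partial\Omega_m} = \kappa_{m,l}$ for $m \in \mathfrak{I}^*$.

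Next, I would solve for $\tilde q \in \htwospace_\Gamma^{p+2}$ satisfying $(\airy \tilde q, \airy s) = (\airy q - \sigma_0, \airy s)$ for all $s \in \htwospace_\Gamma^{p+2}$, i.e.~the $L^2(\Omega;\mathbb{S})$-orthogonal projection of $\airy q - \sigma_0$ onto $\airy \htwospace_\Gamma^{p+2}$. This is well-posed modulo $\mathcal{P}_{1,\Gamma}(\Omega)$, which is the kernel of $\airy$ on $H^2_\Gamma(\Omega)$ by~\Cref{lem:exact-sequence-continuous-disp}, via a Poincar\'e-type inequality; the projection equation automatically gives $\|\airy \tilde q\|_{0,\Omega} \leq \|\airy q\|_{0,\Omega} + \|\sigma_0\|_{0,\Omega}$. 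Setting $\sigma := \sigma_0 + \airy \tilde q$, condition~(i) of~\cref{eq:projection-matching} holds by the projection equation and condition~(ii) holds because $\dive \airy \tilde q \equiv 0$.

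The main obstacle is verifying that the correction $\airy \tilde q$ does not spoil the boundary moments already set by $\sigma_0$, namely that $\langle(\airy \tilde q)\bn, r\rangle_{\partial\Omega_m} = 0$ for each component $\partial\Omega_m$ and each $r \in \rigidbodies$. Using the identity $(\airy \tilde q)\bn = \partial_{\bt}(\grad \tilde q)^\perp$ on $\partial\Omega$ (valid since $\tilde q \in H^2(\Omega)$; cf.~the computation following~\cref{eq:h2gamma-bcs-alt}), integration around the closed loop $\partial\Omega_m$ turns the desired quantity into the integral of a total tangential derivative of a single-valued trace: for the translational generators $r_1, r_2$ the integral vanishes immediately by periodicity of $\grad \tilde q$; for $r_3 = x^\perp$, a further integration by parts (using $\partial_{\bt} x^\perp = \pm \bn$) reduces it to $\int_{\partial\Omega_m}\partial_{\bt}\tilde q\,\ds$ up to sign, which vanishes by periodicity of $\tilde q$. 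This yields (iii), and the stability bound~\cref{eq:projection-matching-stability} follows from the triangle inequality applied to $\sigma = \sigma_0 + \airy \tilde q$. Uniqueness then follows from the surjectivity just established together with the dimension count for $\hdivspace_\Gamma^p$ obtained from~\Cref{lem:hdiv-global-dofs}, which forces the DOF map to be bijective.
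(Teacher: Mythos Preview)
Your existence and stability argument is correct and, in fact, a bit more streamlined than the paper's: the paper first inverts the divergence to obtain $\tau\in(N_\Gamma^p)^\perp$, then separately manufactures a divergence-free $\sigma_3$ carrying the flux data, and finally subtracts an $\airy$-projection; you achieve both at once with a single call to \Cref{thm:invert-div-fem-gen} and then add the $\airy$-correction. Your explicit periodicity argument for $\langle(\airy\tilde q)\bn,r\rangle_{\partial\Omega_m}=0$ is exactly the fact the paper uses (stated there without proof), so conditions (i)--(iii) and the bound \cref{eq:projection-matching-stability} follow as you say.

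The uniqueness argument, however, has a genuine gap. You appeal to a dimension count ``obtained from \Cref{lem:hdiv-global-dofs}'', but that lemma gives $\dim\Sigma^p$, not $\dim\Sigma_\Gamma^p$; passing to the latter requires counting the boundary constraints on $\Gamma_D$, and then matching the result against $\dim\airy Q_\Gamma^{p+2}+\dim V_\Gamma^{p-1}+3|\mathfrak{I}^*|$ further requires knowing $\dim Q_\Gamma^{p+2}$. That identity is precisely the content of \Cref{lem:sigma-projection-dofs}, which in the paper is \emph{derived from} the present lemma---so invoking it here would be circular. Surjectivity alone only gives $\dim\Sigma_\Gamma^p\geq D$; you still need the reverse inequality, which amounts to injectivity.

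The paper instead proves uniqueness directly. Assuming all three sets of DOFs vanish, one has $\dive\sigma\equiv 0$, and a short computation (summing the vanishing moments over $m\in\mathfrak{I}^*$ and over $\Gamma_D$ and using the divergence theorem) forces $\langle\sigma\bn,r\rangle_{\partial\Omega_m}=0$ for \emph{all} $0\le m\le M$. One then invokes \cite[Theorem~2]{Geymonat2000} to write $\sigma=\airy\tilde q$ for some $\tilde q\in H^2(\Omega)$; since $\airy\tilde q$ is piecewise polynomial of degree $p$, necessarily $\tilde q\in Q^{p+2}$, and the normal-trace condition on $\Gamma_D$ together with the identity $(\airy\tilde q)\bn=\partial_{\bt}(\grad\tilde q)^\perp$ shows (after subtracting a suitable affine function) that $\tilde q\in Q_\Gamma^{p+2}$. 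Condition (i) then gives $\airy\tilde q=0$, hence $\sigma=0$.
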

\begin{proof}
    \textbf{Step 1: Existence.}
    Let $q \in \htwospace_{\Gamma}^{p+2}$, $v \in \ltwospace_{\Gamma}^{p-1}$, and $r_m \in \rigidbodies$, $m \in \mathfrak{I}^*$ be given. Thanks to~\cref{thm:invert-div-fem}, there exists $\rho \in \hdivspace_{\Gamma}^{p}$ such that $\dive \rho = v$ and $\|\rho\|_{\dive, \Omega} \leq C \|v\|_{0,\Omega}$. Let $\tau \in (N_{\Gamma}^{p})^{\perp}$ be the $(\cdot,\cdot)_{\dive}$-orthogonal projection of $\rho$ onto $(N_{\Gamma}^{p})^{\perp}$~\cref{eq:div-free-def}. Then $\dive \tau = \dive \rho = u$ and $\|\tau\|_{\dive, \Omega} \leq \|\rho\|_{\dive, \Omega} \leq C \|v\|_{0, \Omega}$.
    
    If $|\Gamma| = |\Gamma_D|$, then set $\sigma_3 = 0$. Otherwise,
    let $\omega \in \mathbb{R}^{J_N \times 3}$ be chosen so that for $1 \leq l \leq 3$, there holds
 \begin{equation}
 	\label{eq:proof:choosing-omegail}
        \sum_{ \substack{i=1 \\ \Gamma_{N, i} \subset \partial \Omega_m } }^{J_N} \omega_{i, l} = \kappa_{m, l} - \langle \tau \bn, r_l \rangle_{\partial \Omega_m} \quad \forall~m \in \mathfrak{I}^*, \qquad \sum_{ i=1 }^{J_N} \omega_{i, l} = 0,
 \end{equation}
    and  $|\omega| \leq C (|\kappa| + \|\tau\|_{\dive, \Omega})$.
    \Cref{thm:invert-div-fem-gen} shows that there exists $\sigma_3 \in \hdivspace_{\Gamma}^{3}$ such that $\dive \sigma_3 \equiv 0$,
 \begin{equation*}
        \langle \sigma_3 \bn, r_l \rangle_{\partial \Omega_m} = \sum_{ \substack{i=1 \\ \Gamma_{N, i} \subset \partial \Omega_m } }^{J_N} \omega_{i, l} = \kappa_{m, l} - \langle \tau \bn, r_l \rangle_{\partial \Omega_m}
         \qquad \forall~m \in \mathfrak{I}^*, \ 1 \leq l \leq 3,
 \end{equation*}
    and $\|\sigma_3\|_{\dive, \Omega} \leq C |\omega| \leq C (|\kappa| + \|\tau\|_{\dive, \Omega})$.
    
    Since $\airy \htwospace_{\Gamma}^{p+2}$ is a closed subspace of $N_{\Gamma}^p$~\cref{eq:div-free-def}, there exists a unique $\phi \in N_{\Gamma}^p$ satisfying
 \begin{equation*}
        \| \phi \|_{\dive, \Omega} \leq \|\sigma_3\|_{\dive, \Omega}, \quad  
        \phi - \sigma_3 \in \airy \htwospace_{\Gamma}^{p+2} \quad \text{and} \quad (\phi, \airy s) = 0 \qquad \forall~s \in \htwospace_{\Gamma}^{p+2}.
 \end{equation*}
    Moreover, $\langle (\airy s) \bn, r \rangle_{\partial \Omega_m} = 0$ for all $s \in \htwospace_{\Gamma}^{p+2}$, $r \in \rigidbodies$, $0 \leq m \leq M$, and so $\langle \phi \bn, r_l \rangle_{\partial \Omega_m} = \kappa_{m, l} - \langle \tau \bn, r_l \rangle_{\partial \Omega_m}$ for $m \in \mathfrak{I}^*$, $1 \leq l \leq 3$. The tensor $\sigma = \tau + \phi + \airy q$ then satisfies~\cref{eq:projection-matching,eq:projection-matching-stability} by construction.
    
    \textbf{Step 2: Uniqueness.}
    Now suppose that $\sigma \in \hdivspace_{\Gamma}^{p}$ satisfies~\cref{eq:projection-matching} with $q \equiv 0$, $v \equiv 0$, and $\kappa \equiv 0$. Since $p \geq 3$, there holds $\ltwospace_{\Gamma}^{p-1} = \dive \hdivspace_{\Gamma}^{p}$ by~\cref{thm:invert-div-fem}, and so the condition $(\dive \sigma, v) = 0$ for all $v \in \ltwospace_{\Gamma}^{p-1}$ means that $\dive \sigma \equiv 0$. Moreover, the condition $\langle \sigma \bn, r \rangle_{\partial \Omega_m} = 0$ for all $r \in \rigidbodies$, $m \in \mathfrak{I}^*$ gives
 \begin{equation*}
        \langle \sigma \bn, r \rangle_{\partial \Omega_n} = \langle \sigma \bn, r \rangle_{\partial \Omega_n} + \langle \sigma \bn, r \rangle_{\Gamma_D} + \sum_{m \in \mathfrak{I}^*} \langle \sigma \bn, r \rangle_{\partial \Omega_m}  = \langle \sigma \bn, r \rangle_{\partial \Omega} = (\dive \sigma, r)_{\Omega} = 0, 
 \end{equation*}
    where $n$ is the sole element in $\mathfrak{I} \setminus \mathfrak{I}^*$ (provided that $\mathfrak{I}$ is nonempty). Consequently, $\langle \sigma \bn, r \rangle_{\partial \Omega_m} = 0$ for all $0 \leq m \leq M$.
    
    By~\cite[Theorem 2]{Geymonat2000}, there exists $\tilde{q} \in H^2(\Omega)$ such that $\airy \tilde{q} = \sigma$. Since $\airy \tilde{q}$ is a piecewise polynomial of degree $p$, there holds $\tilde{q} \in \htwospace^{p+2}$. Since $\langle (\airy \tilde{q})\bn, v \rangle_{\partial \Omega} = 0$ for all $v \in H^1(\Omega)$ with $v|_{\Gamma_N} = 0$ and $(\airy \tilde{q}) \bn = \partial_{\bt} ( (\partial_{\bn} \tilde{q}) \bt - (\partial_{\bt} \tilde{q}) \bn ) = \partial_{\bt} (\grad \tilde{q})^{\perp}$, we have that $\grad \tilde{q}|_{\Gamma_{D, j}} \in \mathbb{R}$ for $0 \leq j \leq J_D$. In particular, there exists an affine function $\ell \in \mathcal{P}_1(\Omega)$ such that $q := \tilde{q} - \ell$ satisfies $q|_{\Gamma_{D, 0}} = \partial_{\bn} q|_{\Gamma_{D, 0}} = 0$ so that $q \in \htwospace_{\Gamma}^{p+2}$. Thus, $\sigma = \airy q$ and the condition $(\sigma, \airy s) = (\airy q, \airy s)$ for all $s \in \htwospace_{\Gamma}^{p+2}$, gives $q \equiv 0$, and so $\sigma \equiv 0$. The uniqueness of $\sigma$ satisfying~\cref{eq:projection-matching} now follows.
\end{proof}

 
\subsection{Proof of~\cref{lem:sigma-projection-dofs}}\label{sec:proof-projection-dofs}

Suppose that $\sigma \in \hdivspace_{\Gamma}^{p}$ and that all of the degrees of freedom in~\cref{lem:sigma-projection-dofs} vanish. Inequality~\cref{eq:projection-matching-stability} then gives that $\sigma \equiv 0$ and so $\dim \hdivspace_{\Gamma}^p \leq \dim \airy \htwospace_{\Gamma}^{p + 2} + \dim \ltwospace_{\Gamma}^{p - 1} + 3|\mathfrak{I}^*|$.

On the other hand,~\cref{lem:projection-matching} shows that the degrees of freedom in the statement of the lemma are linearly independent and that $\dim \hdivspace_{\Gamma}^p \geq \dim \airy \htwospace_{\Gamma}^{p + 2} + \dim \ltwospace_{\Gamma}^{p - 1} + 3|\mathfrak{I}^*|$. The result now follows since 
$\dim \airy \htwospace_{\Gamma}^{p + 2} = \dim \htwospace_{\Gamma}^{p + 2} - \dim \mathcal{P}_{1, \Gamma}(\Omega)$. 
\hfill\proofbox

 
\subsection{Proof of~\cref{thm:commuting-diagram}}\label{sec:proof-commuting-diagram}

Let $p \geq 3$ be given.

\textbf{Step 1: Projection.} The property that $\Pi_{\htwospace}^{p+2} \circ \Pi_{\htwospace}^{p+2} = \Pi_{\htwospace}^{p+2}$ and $\Pi_{\ltwospace}^{p-1} \circ \Pi_{\ltwospace}^{p-1} = \Pi_{\ltwospace}^{p-1}$ follows by construction. For $\sigma \in \hdivspace_{\Gamma}^{p}$, the functions $\Pi_{\hdivspace}^{p} \sigma$ and $\sigma$ have the same degrees of freedom listed in~\cref{lem:sigma-projection-dofs}, and so $\Pi_{\hdivspace}^{p} \sigma = \sigma$ and $\Pi_{\hdivspace}^{p} \circ \Pi_{\hdivspace}^{p} = \Pi_{\hdivspace}^{p}$.

\textbf{Step 2: Stability.} Since $\Pi_{\ltwospace}^{p-1}$ is an orthogonal projection, inequality~\cref{eq:pi-v-bounded} holds, while inequality~\cref{eq:pi-q-bounded} is an immediate consequence of  Poincar\'{e}'s inequality.  

Now suppose that $\sigma \in \hdivgamma{\Omega}{\mathbb{S}}$ and let $q \in \htwospace_{\Gamma}^{p+2}$ be given by
 \begin{equation*}
    (\airy q, \airy r) = (\sigma, \airy r) \qquad \forall~r \in \htwospace_{\Gamma}^{p+2} \quad \text{and} \quad (q, \ell) = 0 \qquad \forall~\ell \in \mathcal{P}_{1, \Gamma}(\Omega).
 \end{equation*}
By construction, $\|\airy q\|_{0, \Omega} \leq C \|\sigma\|_{0, \Omega}$. Moreover, $\Pi_{\hdivspace}^{p} \sigma$ satisfies
 \begin{alignat*}{2}
    (\Pi_{\hdivspace}^p \sigma, \airy s) &= (\airy q, \airy s) \qquad & &\forall~q \in \htwospace_{\Gamma}^{p+2}, \\
    (\dive \Pi_{\hdivspace}^p \sigma , v) &= (\Pi_{\ltwospace}^{p-1} \dive \sigma, v) \qquad & &\forall~v \in \ltwospace_{\Gamma}^{p-1}, \\
    \langle \Pi_{\hdivspace}^{p} \sigma \bn, r \rangle_{\partial \Omega_m} &= \langle \sigma \bn, r \rangle_{\partial \Omega_m}, \qquad & &\forall~r \in \rigidbodies, \ \forall~m \in \mathfrak{I}^*,
 \end{alignat*}
and so inequality~\cref{eq:pi-sigma-bounded} follows from~\cref{eq:projection-matching-stability}. 

\textbf{Step 3: Commutativity.} Let $q \in H^2_{\Gamma}(\Omega)$ be given. By definition~\cref{eq:pi-sigma-def}, there holds
 \begin{equation*}
    (\Pi_{\hdivspace}^p \airy q, \airy s) = (\airy q, \airy s) = (\airy \Pi_{\htwospace}^{p+2} q, \airy s ) \qquad  \forall~s \in \htwospace_{\Gamma}^{p},
 \end{equation*}
while $\dive \Pi_{\hdivspace}^p \airy q \equiv 0 \equiv \dive \airy \Pi_{\htwospace}^{p+2} q$ and 
 \begin{equation*}
    \langle \Pi_{\hdivspace}^p \airy q, r \rangle_{\partial \Omega_m} = \langle \airy q, r \rangle_{\partial \Omega_m} = 0 = \langle \airy \Pi_{\htwospace}^{p+2} q, r \rangle_{\partial \Omega_m} \qquad \forall~r \in \rigidbodies, \ \forall~m \in \mathfrak{I}^*.
 \end{equation*}
As a result, $\Pi_{\hdivspace}^p \airy q \in \hdivspace_{\Gamma}^{p}$ and $\airy \Pi_{\htwospace}^{p+2} q \in \hdivspace_{\Gamma}^{p}$ have the same degrees of freedom listed in~\cref{lem:sigma-projection-dofs}, and thus~\cref{eq:commute-airy} holds. \Cref{eq:commute-div} follows immediately by construction~\cref{eq:pi-sigma-def}. 
\hfill\proofbox

 
\subsection{Proof of~\cref{thm:stable-decomposition-low-airy-div}}\label{sec:proof-stable-decomp}

\textbf{Step 1: $\tau$.} Let $\sigma \in \hdivspace_{\Gamma}^{p}$ be given. The same arguments in Step 1 of the proof of~\cref{lem:projection-matching} shows that there exists $\tau \in (N_{\Gamma}^p)^{\perp}$ satisfying $\dive \tau = \dive \sigma$ and $\|\tau\|_{\dive, \Omega} \leq C \|\dive \sigma\|_{0, \Omega}$.

\textbf{Step 2: $\phi_3$.} The function $\tilde{\sigma} := \sigma - \tau$ satisfies $\dive \tilde{\sigma} \equiv 0$ and $\|\tilde{\sigma}\|_{\dive, \Omega} \leq C \|\sigma\|_{\dive, \Omega}$. Applying~\cref{lem:projection-matching} with $p=3$, $q \equiv 0$, $v \equiv 0$, and $\kappa_{m, l} := \langle \tilde{\sigma} \bn, r_l \rangle_{\partial \Omega_m}$, $m \in \mathfrak{I}^*$, $1 \leq l \leq 3$, we obtain $\phi_3 \in \mathfrak{H}_{\Gamma}^3$~\cref{eq:harmonic-forms-discrete-def} satisfying $\dive \phi_3 \equiv 0$ and
 \begin{align}\label{eq:proof:sigma3-boundary-moments}
    \langle \phi_3 \bn, r \rangle_{\partial \Omega_m} = \langle \tilde{\sigma} \bn, r \rangle_{\partial \Omega_m}, \qquad \forall~r \in \rigidbodies, \ \forall~m \in \mathfrak{I}^*.
 \end{align}
Arguing as in Step 2 of the proof of~\cref{lem:projection-matching}, we may show that~\cref{eq:proof:sigma3-boundary-moments} holds for all $0 \leq m \leq M$.

\textbf{Step 3: $q$.} The function $\rho := \tilde{\sigma} - \phi_3$ then satisfies $\dive \rho \equiv 0$ and $\langle \phi_3 \bn, r \rangle_{\partial \Omega_m} = 0$ for all $r \in \rigidbodies$ and $1 \leq m \leq M$. The same arguments in Step 2 of the proof of~\cref{lem:projection-matching} show that there exists $q \in Q_{\Gamma}^{p+2}$ such that $\rho = \airy q$. If $|\Gamma_D| > 0$, then $\|q\|_{2, \Omega} \leq C \|\airy q\|_{0, \Omega}$ by Poincar\'{e}'s inequality. If $|\Gamma_D| = 0$, then $q$ is unique up to an affine function, so we choose $q$ so that $\|q\|_{2, \Omega} \leq C \|\airy q\|_{0, \Omega}$. In both cases,~\cref{eq:low-order-div-free-high-order-decomp} now follows on collecting results. 
\hfill\proofbox

\appendix

 
\section{Equivalence to the Hu--Zhang element}\label{sec:hu-zhang}

The original Hu--Zhang work~\cite{Hu2014} characterized the local space 
via primal basis functions,
as 
constant tensors in a basis of $\mathbb{S}$ 
multiplied by standard $C^0$ Lagrange elements, and augmented with edge bubbles.

 \begin{lemma}\label{lem:equal-hu-zhang}
    \emph{(Coincidence with the Hu--Zhang space.)}
    Let $p \geq 3$. Given $K \in \mathcal{T}$ labeled as in~\cref{fig:general-triangle}, let $\{ \lambda_i \}_{i=1}^{3}$ denote the barycentric coordinates on $K$ and define
 \begin{equation*}
        \hdivspace_{\partial K} := \spann\{ (\lambda_{i} \lambda_{i+1} q) \bt_{i+2} \otimes\bt_{i+2} : q \in \mathcal{P}_{p-2}(\gamma_i), \ 1 \leq i \leq 3   \},
 \end{equation*}
    where the indices are understood modulo 3. Then the space $\hdivspace^p$ admits the following alternative characterization:
 \begin{align}\label{eq:hu-zhang-def}
        \begin{aligned}
            \hdivspace^p &= \{ \sigma \in \hdiv{\Omega}{\mathbb{S}}:\sigma = \sigma_c + \sigma_b, \ \sigma_c \in H^1(\Omega; \mathbb{S}), \\
            &\qquad \sigma_c|_{K} \in \mathcal{P}_p(K; \mathbb{S}) \quad \text{and} \quad \sigma_b|_{K} \in \hdivspace_{\partial K} \ \forall~K \in \mathcal{T} \}.
        \end{aligned}
 \end{align}
    Consequently, $\Sigma^{p}$ is the same stress space as appears in~\cite{Hu2014}.
 \end{lemma}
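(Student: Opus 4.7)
The plan is to establish both inclusions in~\cref{eq:hu-zhang-def}. For the ``$\supseteq$'' direction, I would take any $\sigma = \sigma_c + \sigma_b$ with $\sigma_c \in H^1(\Omega; \mathbb{S})$ piecewise $\mathcal{P}_p$ and $\sigma_b|_K \in \hdivspace_{\partial K}$ on each $K \in \mathcal{T}$. Membership $\sigma \in \hdiv{\Omega}{\mathbb{S}}$ is given by hypothesis, and the piecewise $\mathcal{P}_p(K; \mathbb{S})$ property is immediate from the two pieces. The key observation for $C^0$-continuity at vertices is that each factor $\lambda_i \lambda_{i+1}$ vanishes at every vertex of $K$: $\lambda_i$ is zero at $a_{i+1}$ and $a_{i+2}$, while $\lambda_{i+1}$ is zero at $a_i$ and $a_{i+2}$, between them covering all three vertices. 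Hence $\sigma_b$ vanishes at every vertex, and the value of $\sigma$ at each vertex coincides with that of the globally continuous $\sigma_c$, so $\sigma \in \hdivspace^p$.

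For the reverse inclusion, my plan is to construct the decomposition edge by edge by cancelling tangential--tangential jumps of $\sigma$. Given $\sigma \in \hdivspace^p$ and any interior edge $\gamma$ with endpoints $a, b$ and unit tangent $\bt_\gamma$, shared by elements $K$ and $K'$, the $\hdiv{\Omega}{\mathbb{S}}$-conformity of $\sigma$ makes $\sigma \bn_\gamma|_\gamma$ continuous across $\gamma$, and the $C^0$-at-vertices hypothesis renders $\sigma(a)$ and $\sigma(b)$ single-valued. Consequently, only the tangential--tangential component $\sigma_{tt} := \bt_\gamma^\top \sigma \bt_\gamma$ can jump across $\gamma$; its jump $[\sigma_{tt}]_\gamma$ lies in $\mathcal{P}_p(\gamma)$ and vanishes at both endpoints of $\gamma$, hence factors uniquely as $\lambda_a \lambda_b q_\gamma$ with $q_\gamma \in \mathcal{P}_{p-2}(\gamma)$. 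I would then assign each interior edge to exactly one of its two adjoining elements (its ``preferred'' side), place on that side the local bubble $\lambda_a \lambda_b q_\gamma \bt_\gamma \otimes \bt_\gamma \in \hdivspace_{\partial K}$, and contribute nothing on non-preferred sides or boundary edges.

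Setting $\sigma_c := \sigma - \sigma_b$, I would verify that $\sigma_c$ has (i) continuous normal trace on every edge, since each local bubble has vanishing normal trace on $\partial K$; (ii) continuous tangential--tangential trace on every interior edge, by the jump-cancellation construction; and (iii) single-valued vertex values, since $\sigma_b$ vanishes at every vertex. In the edge-aligned orthonormal frame $\{\bt_\gamma, \bn_\gamma\}$, continuity of the two components of $\sigma_c \bn_\gamma$ together with continuity of $\bt_\gamma^\top \sigma_c \bt_\gamma$ accounts for all three independent entries of a symmetric $2 \times 2$ tensor; hence $\sigma_c$ is continuous across every edge, and combined with its piecewise $\mathcal{P}_p$ structure this yields $\sigma_c \in H^1(\Omega; \mathbb{S})$. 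The main subtlety will be the orientation bookkeeping, namely selecting a consistent preferred side for every interior edge and confirming that scalar-component continuity in each rotating edge-aligned frame genuinely promotes to global continuity of the symmetric-tensor-valued field. As a sanity check, a dimension count using~\cref{lem:hdiv-global-dofs} is available: the Lagrange $H^1$-conforming symmetric-tensor space of degree $p$ and the per-element bubble space $\bigoplus_{K \in \mathcal{T}} \hdivspace_{\partial K}$ intersect in the $(p-1)|\mathcal{E}|$-dimensional subspace of tangent--tangent edge bubbles with matching $q$ from both sides, and subtracting this overlap recovers exactly $\dim \hdivspace^p$ as given in~\cref{eq:dim-hdiv}.
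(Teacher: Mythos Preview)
Your proposal is correct but takes a genuinely different route from the paper. The paper's proof is a pure dimension count: having noted the easy inclusion $\Upsilon^p \subseteq \Sigma^p$ (where $\Upsilon^p$ denotes the right-hand side of~\cref{eq:hu-zhang-def}), it cites the explicit Hu--Zhang basis from~\cite{Hu2014}, counts $3|\mathcal{V}| + 2(p-1)|\mathcal{E}| + \tfrac{3(p-1)(p-2)}{2}|\mathcal{T}| + (p-1)(2|\mathcal{E}_I| + |\mathcal{E}_B|)$ functions, applies the combinatorial identity $2|\mathcal{E}_I| + |\mathcal{E}_B| = 3|\mathcal{T}|$, and matches the result against $\dim \Sigma^p$ from~\cref{lem:hdiv-global-dofs} to conclude equality. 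You instead prove both inclusions directly: the $\supseteq$ direction is the same in spirit (with the vertex-vanishing of $\sigma_b$ made explicit rather than cited), while for $\subseteq$ you build an explicit splitting by peeling off the tangential--tangential jump of $\sigma$ on each interior edge as a single-sided bubble. Your argument is more self-contained (it does not rely on an external enumeration of the Hu--Zhang basis) and produces an algorithmic decomposition $\sigma \mapsto (\sigma_c, \sigma_b)$; the paper's argument is shorter and leverages that both dimensions have already been computed elsewhere. The orientation bookkeeping you flag is genuinely minor, since each bubble $\lambda_i \lambda_{i+1} q\, \bt_{i+2}\otimes\bt_{i+2}$ vanishes on the two edges $\gamma_i,\gamma_{i+1}$ and has zero normal trace on $\gamma_{i+2}$, so bubbles assigned to different edges of the same element do not interfere. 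Your closing sanity check, computing the overlap of the continuous Lagrange space with $\bigoplus_K \Sigma_{\partial K}$ as $(p-1)|\mathcal{E}|$, is exactly the paper's count rearranged.
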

\begin{proof}
    Let $\Upsilon^p$ denote the space on the right of~\cref{eq:hu-zhang-def}. As shown in~\cite{Hu2014}, $\Upsilon^p \subset \hdiv{\Omega}{\mathbb{S}}$ and so $\Upsilon^p \subseteq \Sigma^p$. The basis of $\Upsilon^p$ given on~\cite[p.~5]{Hu2014} consists of
 \begin{equation*}
        3 |\mathcal{V}| + 2(p-1)|\mathcal{E}| + \frac{3(p-1)(p-2)}{2} |\mathcal{T}| + (p-1)(2|\mathcal{E}_I| + |\mathcal{E}_B|)
 \end{equation*}
    functions, where $\mathcal{E}_I$ and $\mathcal{E}_B$ denote the number of interior and boundary edges. Using the identity $2|\mathcal{E}_I| + |\mathcal{E}_B| = 3|\mathcal{T}|$ shows that
 \begin{equation*}
        \dim \Upsilon^p = 3|\mathcal{V}| +  2(p-1)|\mathcal{E}| + \frac{3}{2}p(p-1) = \dim \Sigma^p
 \end{equation*}
    thanks to~\cref{lem:hdiv-global-dofs}, and thus $\Upsilon^p = \Sigma^p$.
\end{proof}

 
\section{Properties of the continuous elasticity complex}\label{sec:exactness-stability-continuous}

In this section, we prove~\cref{thm:h1-inversion-div-bc} and the continuous analog of~\cref{thm:invert-div-fem}. We first construct some inverse trace operators in~\cref{sec:cont-trace-results} and then construct an inverse for the divergence operator in~\cref{sec:cont-inv-div}.

 
\subsection{Trace results}\label{sec:cont-trace-results}

We begin with an analog of~\cref{lem:airy-correction-operator} for a simply connected polygonal domain:
 \begin{lemma}\label{lem:airy-correction-omega}
    Let $U \subset \mathbb{R}^2$ be a simply connected polygonal domain. For all $\tau \in H^1(U; \mathbb{S})$ such that $\dive \tau \in \ltwomodrm(U; \mathbb{V})$, there exists $q \in H^3(U)$ satisfying
 \begin{equation*}
        (\airy q)\bn = \tau \bn \qquad \text{on } \partial U \quad \text{and} \quad \|q\|_{3, U} \leq C \|\tau\|_{1, U},
 \end{equation*}
    where $C$ depends only on $U$.
 \end{lemma}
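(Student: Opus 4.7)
The argument closely mirrors the proof of \cref{lem:airy-correction-operator} for the reference element $\hat{K}$, with $\hat{K}$ replaced by the simply connected polygon $U$. The idea is to build candidate boundary data $(f,g)$ on $\partial U$ so that any $q\in H^3(U)$ with $(q,\partial_{\bn}q)|_{\partial U}=(f,g)$ automatically satisfies $(\airy q)\bn=\tau\bn$ on $\partial U$, and then to invoke an $H^3$ trace-lifting theorem on the polygon.

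\textbf{Construction of boundary data.} Fix any vertex $A_1$ of $\partial U$ and define $v:\partial U\to\mathbb{R}^2$ by the counterclockwise path integral
\begin{equation*}
v(x):=\int_{A_1}^{x}\tau\bn\,\ds,\qquad x\in\partial U.
\end{equation*}
Single-valuedness as $x$ returns to $A_1$ is equivalent to $\int_{\partial U}\tau\bn\,\ds=0$, which follows from the divergence theorem and $(\dive\tau,\be_i)_U=0$, since the canonical basis vectors $\be_i\in\rigidbodies$ and $\dive\tau\in\ltwomodrm(U;\mathbb{V})$. Next define
\begin{equation*}
f(x):=\int_{A_1}^{x} v\cdot\bn\,\ds,\qquad g(x):=-v\cdot\bt.
\end{equation*}
Single-valuedness of $f$ demands $\int_{\partial U}v\cdot\bn\,\ds=0$; using $\partial_{\bt}\bx^{\perp}=-\bn$, integration by parts along $\partial U$, symmetry of $\tau$, and $\bx^{\perp}\in\rigidbodies$,
\begin{equation*}
\int_{\partial U}v\cdot\bn\,\ds = \int_{\partial U}\partial_{\bt}v\cdot\bx^{\perp}\,\ds = \int_{\partial U}(\tau\bn)\cdot\bx^{\perp}\,\ds = (\dive\tau,\bx^{\perp})_U = 0.
\end{equation*}

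\textbf{Regularity, lifting, and verification.} The trace of $\tau\in H^1(U;\mathbb{S})$ on each edge $\gamma$ of $\partial U$ lies in $H^{1/2}(\gamma;\mathbb{S})$, so $v$ is continuous on $\partial U$ with $v|_{\gamma}\in H^{3/2}(\gamma;\mathbb{V})$. Consequently $f|_{\gamma}\in H^{5/2}(\gamma)$, $g|_{\gamma}\in H^{3/2}(\gamma)$, $f$ is continuous, $\mathfrak{D}(f,g)=\partial_{\bt}f\,\bt+g\,\bn$ equals $\pm v^{\perp}$ (up to orientation) and hence is continuous on $\partial U$, and the vertex compatibility integrals $\mathcal{I}^2_i(\mathfrak{D}(f,g))$ are finite by exactly the argument at the end of the proof of \cref{lem:airy-correction-operator}. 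Thus $(f,g)\in X^3(\partial U)$ with $\|(f,g)\|_{X^3(\partial U)}\le C\|\tau\|_{1,U}$, and applying an $H^3$ trace-lifting operator on the polygon $U$ produces $q\in H^3(U)$ with $(q,\partial_{\bn}q)|_{\partial U}=(f,g)$ and $\|q\|_{3,U}\le C\|\tau\|_{1,U}$. A direct computation as in the reference-element case then yields
\begin{equation*}
(\airy q)\bn=\partial_{\bt}(\grad q)^{\perp}=\partial_{\bt}\bigl(\partial_{\bn}q\,\bt-\partial_{\bt}q\,\bn\bigr)=\partial_{\bt}v=\tau\bn\qquad\text{on }\partial U.
\end{equation*}

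\textbf{Main obstacle.} The only step that is not a direct transcription of the reference-element argument is the existence of an $H^3$ trace-lifting operator from $X^3(\partial U)$ to $H^3(U)$ on a general simply connected polygon, rather than on the single reference triangle covered by~\cite[Theorem 3.2]{Parker2023}. This can be obtained either from the Grisvard-type trace characterization on polygonal domains or by a partition-of-unity patching of local sector liftings at each corner with a standard $H^3$ extension in the interior of $U$; once it is available, the construction of $(f,g)$ and the verification that $(\airy q)\bn=\tau\bn$ proceed verbatim.
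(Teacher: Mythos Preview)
Your proposal is correct and follows essentially the same approach as the paper's proof. The ``main obstacle'' you flag is resolved in the paper by invoking \cite[Theorem 6.1]{Arnold1988}, which gives precisely the $H^3$ trace characterization and lifting on a general polygon that you anticipate (and is in fact the same result already used in the proof of \cref{lem:airy-correction-operator} to characterize $X^{s+3}(\partial\hat K)$).
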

\begin{proof}
    The proof proceeds similarly as the proof of~\cref{lem:airy-correction-operator}. Let $\tau$ be as in the statement of the lemma and let $\{ A_j \}_{j=1}^N$ and $\{ \Gamma_j \}_{j=1}^N$ denote the vertices and edges of $U$ in a counterclockwise ordering as in~\cref{fig:domain-example}. Since $U$ is simply connected, we define $v \in L^2(\partial U; \mathbb{V})$ by the rule
 \begin{equation*}
        v(x) = \int_{{A}_1}^{x} \tau \bn \ \ds, \qquad x \in \partial U,
 \end{equation*}
    where the path integral is taken counterclockwise around $\partial U$. Analogous arguments as in the proof of~\cref{lem:airy-correction-operator} show that
 \begin{equation}\label{eq:proof:one-integral-trace-reg-omega}
        v \text{ is continuous} \quad \text{and} \quad v|_{\hat{\Gamma}_i} \in H^{3/2}(\Gamma_i; \mathbb{V}), \quad 1 \leq i \leq N.
 \end{equation}
    Now define the functions $f, g : \partial U \to \mathbb{R}$ by the rules
 \begin{equation*}
        f(x) = \int_{{A}_1}^{x} v \cdot \bn \ \ds \quad \text{and} \quad g(x) = -v(x) \cdot \bt, \qquad x \in \partial U,
 \end{equation*}
    where the path integral is again taken in the counterclockwise direction. Arguing as in the proof of~\cref{lem:airy-correction-operator} shows that
 \begin{equation*}
        f \text{ is continuous}, \quad f|_{{\Gamma}_i} \in H^{5/2}({\Gamma}_i), \quad \text{and} \quad g|_{{\Gamma}_i} \in H^{3/2}({\Gamma}_i), \quad 1 \leq i \leq N,
 \end{equation*}
    with the bound
 \begin{equation}\label{eq:proof:traces-edge-bound-omega}
        \sum_{i=1}^{N} \{ \|f\|_{5/2, {\Gamma}_i}^2 + \|g\|_{3/2, {\Gamma}_i}^2 \} \leq C \sum_{i=1}^{N} \|v\|_{3/2, {\Gamma}_i}^2 \leq C \| \tau \|_{1, \Omega}^2.
 \end{equation}
    Moreover, $\mathfrak{D}(f, g) = \partial_{\bt} f \bt + g \bn = -v^{\perp}$, so $\mathfrak{D}(f, g)$ is continuous by~\cref{eq:proof:one-integral-trace-reg-omega}. Let $\bt_{i}$ and $\bn_i$ denote the unit tangent and outward unit normal vectors on $\Gamma_i$ and $\epsilon := \min_{1 \leq i \leq N} |\Gamma_i|$. Thanks to the identity $\partial_{\bt} \mathfrak{D}(f, g) = \tau \bt$, there holds
 \begin{multline*}
        \bt_{i+1} \cdot \partial_{\bt}\mathfrak{D}(f, g)({A}_i + \delta \bt_{i+2}) - \bt_{i+2} \cdot \partial_{\bt} \mathfrak{D}(f, g)({A}_i - \delta \bt_{i+1}) \\ =  \bt_{i+1} \cdot \tau({A}_i + \delta \bt_{i+2}) \bt_{i+2} - \bt_{i+2} \cdot \tau({A}_i - \delta \bt_{i+1}) \bt_{i+1} 
    \end{multline*}
    for $0 \leq \delta \leq \epsilon$ and $1 \leq i \leq N$. Consequently,
 \begin{multline}\label{eq:proof:traces-vertex-bound-omega}
        \sum_{i=1}^{N} \int_{0}^{\epsilon} \frac{|\bt_{i+1} \cdot \partial_{\bt}\mathfrak{D}(f, g)({A}_i + \delta \bt_{i+2}) - \bt_{i+2} \cdot \partial_{\bt} \mathfrak{D}(f, g)({A}_i - \delta \bt_{i+1})|^2}{\delta} \ \D \delta  \\
         \leq \sum_{i=1}^{N}\int_{0}^{\epsilon} \frac{|\tau(\hat{a}_i + \delta \bt_{i+2}) - \tau(\hat{a}_i - \delta \bt_{i+1}) |^2}{\delta} \D \delta \leq \|\tau\|_{1/2, \partial \Omega}^2 < \infty.
 \end{multline}
    Thanks to~\cite[Theorem 6.1]{Arnold1988}, there exists $q \in H^3(U)$ satisfying $q|_{\partial U} = f$, $\partial_{\bn} q|_{\partial U} = g$ and the bound $\|q\|_{3, U} \leq C \|\tau\|_{1, U}$ follows from~\cref{eq:proof:traces-edge-bound-omega,eq:proof:traces-vertex-bound-omega}. Additionally, $(\airy q)\bn = \tau \bn$ on $\partial U$, which completes the proof.
\end{proof}

The second result concerns the existence of symmetric $H^1$ tensor fields with specified moments on $\Gamma_N$:
 \begin{lemma}\label{lem:h1-rigid-interp}
    For every $\omega \in \mathbb{R}^{J_N \times 3}$, there exists $\Psi \in H^1(\Omega; \mathbb{S}) \cap \hdivgamma{\Omega}{\mathbb{S}}$ such that
 \begin{equation}\label{eq:h1-rigid-interp}
        \int_{ \Gamma_{N, j} } (\Psi \bn) \cdot r_l \ \ds = \omega_{j, l}, \quad 1 \leq j \leq J_N, \ 1 \leq l \leq 3, \quad \text{and} \quad \| \Psi \|_{1} \leq C |\omega|,
 \end{equation}
    where $C$ is independent of $\omega$.
 \end{lemma}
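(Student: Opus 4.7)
By linearity, it suffices, for each $(j,l) \in \{1,\ldots,J_N\}\times\{1,2,3\}$, to construct a single tensor $\Psi_{j,l} \in H^1(\Omega;\mathbb{S}) \cap \hdivgamma{\Omega}{\mathbb{S}}$ with
\begin{equation*}
    \int_{\Gamma_{N,j'}}(\Psi_{j,l}\bn)\cdot r_{l'}\,\ds \;=\; \delta_{jj'}\delta_{ll'}, \qquad 1 \leq j' \leq J_N,\ 1 \leq l' \leq 3,
\end{equation*}
because then $\Psi := \sum_{j,l}\omega_{j,l}\Psi_{j,l}$ satisfies~\cref{eq:h1-rigid-interp} with $C := \max_{j,l}\|\Psi_{j,l}\|_{1,\Omega}$, depending only on $\Omega$ and the partition of $\Gamma$.

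For each $j$, I fix a single flat sub-edge $\gamma_j \subset \Gamma_{N,j}$ (which exists since $\Gamma_{N,j}$ contains at least one boundary edge of positive length) and an open subinterval $I_j \subset \gamma_j$ whose closure lies in the relative interior of $\gamma_j$. The restrictions $r_1|_{I_j}, r_2|_{I_j}, r_3|_{I_j}$ are linearly independent as $\mathbb{V}$-valued functions on any open subset of $\mathbb{R}^2$, so for any nonnegative $\varphi \in C^\infty_c(I_j)$ that is not identically zero, the weighted Gram matrix $G_{mm'} := \int_{I_j}\varphi\, r_m\cdot r_{m'}\,\ds$ is symmetric positive definite. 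Setting
\begin{equation*}
    g_{j,l} \;:=\; \varphi \sum_{m=1}^{3} (G^{-1})_{lm}\, r_m \;\in\; C^\infty_c(I_j;\mathbb{V})
\end{equation*}
yields $\int_{I_j}g_{j,l}\cdot r_{l'}\,\ds = \delta_{ll'}$ for $1 \leq l' \leq 3$.

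Next I would lift $g_{j,l}$ off $I_j$ into $\Omega$. Since $\gamma_j$ is flat, there is a rectangular tubular neighborhood $U_j \subset \Omega$ of $I_j$ equipped with orthonormal coordinates $(s,t)$, where $s$ is arclength along $\gamma_j$ and $t \geq 0$ is perpendicular distance into $\Omega$, so on $I_j$ the outward unit normal equals $-\partial_t$. Pick $\eta \in C^\infty([0,\infty))$ with $\eta(0) = 1$ and $\eta(t) = 0$ for $t \geq T$, where $T$ is small enough that $\overline{U_j} \cap \Gamma_D = \emptyset$ and $\overline{U_j} \cap \Gamma_{N,j'} = \emptyset$ for $j' \neq j$. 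Writing $g_{j,l}(s) = g_s(s)\partial_s + g_t(s)\partial_t$, I define $\Psi_{j,l}$ in the frame $\{\partial_s,\partial_t\}$ on $U_j$ by $(\Psi_{j,l})_{ss} := 0$, $(\Psi_{j,l})_{st} = (\Psi_{j,l})_{ts} := -\eta(t)\,g_s(s)$, and $(\Psi_{j,l})_{tt} := -\eta(t)\,g_t(s)$, then extend by zero to $\Omega \setminus U_j$. By construction $(\Psi_{j,l})\bn = g_{j,l}$ on $I_j$ and $(\Psi_{j,l})\bn = 0$ elsewhere on $\Gamma$; since $\eta$ and $g_{j,l}$ are smooth and compactly supported in $t \in [0,T)$ and $s \in I_j$ respectively, the extension is smooth and compactly supported in $\overline{\Omega}\setminus\Gamma_D$, hence lies in $H^1(\Omega;\mathbb{S}) \cap \hdivgamma{\Omega}{\mathbb{S}}$ with the required moments.

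There is no serious obstacle; the point that needs attention is to choose $I_j$ strictly inside a single flat sub-edge of $\gamma_j$ so that the tubular coordinates remain regular and the construction never has to negotiate a corner of $\partial\Omega$. The change of frame from $\{\partial_s,\partial_t\}$ to the Cartesian frame on $U_j$ is a single rigid rotation, so symmetry, the $H^1$ bound, and the boundary trace identity all pass through unchanged.
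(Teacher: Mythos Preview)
Your proposal is correct and follows essentially the same approach as the paper: reduce by linearity to constructing a ``dual basis'' $\{\Psi_{j,l}\}$, pick a single flat boundary edge inside each $\Gamma_{N,j}$, build compactly supported vector fields there dual to $\{r_l\}$, and lift them to symmetric $H^1$ tensors supported away from $\Gamma_D$. The only difference is in the lifting step: the paper invokes the scalar trace theorem to extend the normal and tangential components $\phi_{j,l}\cdot\bn$ and $\phi_{j,l}\cdot\bt$ separately and then assembles $\Psi_{j,l}=\Phi_{j,l,\bn}\,\bn\otimes\bn+\Phi_{j,l,\bt}(\bt\otimes\bn+\bn\otimes\bt)$, whereas you write down an explicit product extension $\eta(t)g(s)$ in tubular coordinates. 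Your version is more self-contained but requires the extra care you note about keeping $I_j$ strictly interior to a flat edge; the paper's trace-theorem route sidesteps the tubular-neighborhood geometry at the cost of a black-box citation.
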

\begin{proof}
    Let $1 \leq j \leq J_N$ be given and let $k(j) \in \{1, \ldots, N\}$ be any index such that the edge $\Gamma_{k(j)} \subseteq \Gamma_{N, j}$. For $1 \leq l \leq 3$, let $\phi_{j,l} \in C^{\infty}_c(\Gamma_{k(j)}; \mathbb{V})$ be any functions satisfying $(\phi_{j,l}, r_n)_{\Gamma_{k(j)}} = \delta_{ln}$ for $1 \leq n \leq 3$.
    By the trace theorem, there exist $\Phi_{j,l, \bn}, \Phi_{j,l, \bt} \in H^1(\Omega)$ satisfying
 \begin{alignat*}{2}
        \Phi_{j, l, \bn} |_{\Gamma_{k(j)}} &= \phi_{j, l} \cdot \bn_{k(j)}, \qquad & 
        \Phi_{j, l, \bt} |_{\Gamma_{k(j)}} &= \phi_{j, l} \cdot \bt_{k(j)}, \\
        \Phi_{j, l, \bn} |_{\Gamma \setminus \Gamma_{k(j)}} &= 0, \qquad & \Phi_{j, l, \bt}|_{\Gamma \setminus \Gamma_{k(j)}} &= 0, \\
        \| \Phi_{j, l, \bn} \|_{1, \Omega} &\leq C \|\phi_{j, l}\|_{1/2, \Gamma_{k(j)}}, \qquad & \| \Phi_{j, l, \bt} \|_{1, \Omega} &\leq C \|\phi_{j, l}\|_{1/2, \Gamma_{k(j)}},
 \end{alignat*}
    where $\bn_{k(j)}$ and $\bt_{k(j)}$ are the unit normal and tangent vectors on $\Gamma_{k(j)}$. Consequently, the symmetric tensor
 \begin{equation*}
        \Psi_{j, l} := \Phi_{j, l, \bn} \bn_{k(j)} \otimes \bn_{k(j)} + \Phi_{j, l, \bt} (\bt_{k(j)} \otimes \bn_{k(j)} + \bn_{k(j)} \otimes \bt_{k(j)}),  
 \end{equation*}
    satisfies $\Psi_{j, l} \in H^1(\Omega; \mathbb{S})$ with
 \begin{equation*}
        \int_{\Gamma_{N, j}} (\Psi_{j, l} \bn) \cdot r_n \ \ds = \delta_{ln}, \qquad 1 \leq n \leq 3, \quad \text{and} \quad  \| \Psi_{j, l} \|_{1, \Omega} \leq C \|\phi_{j, l}\|_{1/2, \Gamma_{k(j)}}.
 \end{equation*}
    Then $\Psi := \sum_{j=1}^{J_N} \sum_{l=1}^{3} \omega_{j, l} \Psi_{j, l}$ satisfies $\Psi \in H^1(\Omega; \mathbb{S}) \cap \hdivgamma{\Omega}{\mathbb{S}}$ and~\cref{eq:h1-rigid-interp}.
\end{proof}

 
\subsection{Inverting divergence}\label{sec:cont-inv-div}

The first step is to invert the divergence operator in the case where traction boundary conditions are imposed on the entirety of $\partial \Omega$:
 \begin{lemma}\label{lem:h1-inversion-div}
    For every $u \in \ltwomodrm(\Omega; \mathbb{V})$, there exists $\sigma \in H^1(\Omega; \mathbb{S}) \cap \hdivz{\Omega}{\mathbb{S}}$ satisfying
 \begin{equation}\label{eq:h1-inversion-div}
        \dive \sigma = u \quad \text{and} \quad \|\sigma\|_{1, \Omega} \leq C \|u\|_{0, \Omega}.
 \end{equation}
 \end{lemma}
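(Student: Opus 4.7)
The lemma is the global analog of \cref{thm:poincare-operators-element} on the reference triangle, and its proof mirrors the same two-step template: invert the divergence by a BGG-type construction, then correct the normal trace with an airy potential.

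\emph{Step 1: Divergence inverse without boundary condition.} Globalise \cref{lem:invert-div-ref-element-nobc} from the reference triangle to $\Omega$. The Costabel regularised Poincar\'e operators \cref{eq:costabel-poincare-curl}--\cref{eq:costabel-poincare-div} and the twisted/BGG algebra of \cref{sec:invert-div-no-bc} are defined verbatim on any bounded Lipschitz domain star-shaped with respect to a ball. Cover $\Omega$ by finitely many such subdomains and glue the corresponding local $H^1$-bounded divergence-inverses by a subordinate partition of unity, correcting each cut-off commutator $[\dive,\chi_i]$ by a scalar Bogovski\u{\i} inverse. This produces a bounded operator $\mathcal{L}_{\dive}^{\Omega}\colon L^2(\Omega;\mathbb{V})\to H^1(\Omega;\mathbb{S})$ with $\dive\mathcal{L}_{\dive}^{\Omega}=I$. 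Set $\tau:=\mathcal{L}_{\dive}^{\Omega}u$.

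\emph{Step 2: Trace correction (simply connected case).} When $M=0$, $\dive\tau=u\in\ltwomodrm(\Omega;\mathbb{V})$ makes \cref{lem:airy-correction-omega} directly applicable, yielding $q\in H^3(\Omega)$ with $(\airy q)\bn=\tau\bn$ on $\partial\Omega$ and $\|q\|_{3,\Omega}\le C\|\tau\|_{1,\Omega}$. Then $\sigma:=\tau-\airy q$ satisfies all the stated conclusions.

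\emph{Step 3: Trace correction (multiply connected case).} For $M\ge 1$ the per-hole rigid-body circulations
\begin{equation*}
\omega_{m,l}:=\oint_{\partial\Omega_m}\tau\bn\cdot r_l\,\ds,\qquad 1\le m\le M,\ 1\le l\le 3,
\end{equation*}
may be nonzero, whereas any airy stress function identically satisfies $\oint_{\partial\Omega_m}(\airy q)\bn\cdot r_l\,\ds=0$ (by $(\airy q)\bn=\partial_\bt(\grad q)^\perp$ and integration by parts around the closed loop). A pure airy correction therefore fails, and one must first construct an auxiliary divergence-free $\Psi\in H^1(\Omega;\mathbb{S})$ with $\oint_{\partial\Omega_m}\Psi\bn\cdot r_l=\omega_{m,l}$ and $\|\Psi\|_{1,\Omega}\le C|\omega|\le C\|\tau\|_{1,\Omega}$; its existence follows by a Lax--Milgram argument adapted to each annular neighbourhood of a hole. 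The identity $\sum_{m=0}^M\oint_{\partial\Omega_m}\tau\bn\cdot r\,\ds=\int_\Omega u\cdot r\,\dx=0$ (from $u\in\ltwomodrm$) then guarantees that $\tau-\Psi$ has vanishing circulation on \emph{every} component of $\partial\Omega$. Running the path-integral argument of \cref{lem:airy-correction-omega} on each $\partial\Omega_m$ separately yields compatible traces $(f_m,g_m)\in X^3(\partial\Omega_m)$, lifted by~\cite[Theorem 6.1]{Arnold1988} to a single $q\in H^3(\Omega)\cap H^2_\Gamma(\Omega)$ (with affine boundary data on the inner components, consistent with~\cref{eq:h2gamma-def}) realising $(\airy q)\bn=(\tau-\Psi)\bn$ on $\partial\Omega$. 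Set $\sigma:=\tau-\Psi-\airy q$; the triangle inequality chains the three norm bounds into $\|\sigma\|_{1,\Omega}\le C\|u\|_{0,\Omega}$.

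\emph{Main obstacle.} The construction of the circulation-carrying tensor $\Psi$ in the multiply connected case, together with verifying the per-component compatibility needed to run the proof of~\cref{lem:airy-correction-omega} on each $\partial\Omega_m$. That the construction has no residual obstruction is a manifestation of~\cref{lem:exact-sequence-continuous-disp} specialised to $\mathfrak{I}^*=\emptyset$ (pure Dirichlet), which gives $\dim\mathfrak{H}_\Gamma(\Omega)=0$.
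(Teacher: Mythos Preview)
Your two-step template (invert $\dive$, then correct the trace by an airy potential) matches the paper's, but the realisations differ substantially, and the paper's is both simpler and avoids the obstruction you spend Step~3 overcoming.

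\textbf{What the paper does.} For the initial inverse the paper does not globalise the Costabel/BGG machinery at all. Instead it extends $u$ by zero to a ball $B_R\supset\Omega$, solves the pure-displacement elasticity problem $(\varepsilon(\rho),\varepsilon(\psi))_{B_R}=-(u,\psi)_{B_R}$ for $\rho\in H^1_0(B_R;\mathbb{V})$, and invokes interior elliptic regularity to get $\rho\in H^2(B_R;\mathbb{V})$; the candidate tensor is $\varepsilon(\rho)\in H^1(\Omega;\mathbb{S})$ with $\dive\varepsilon(\rho)=u$. The decisive payoff of this construction is that $u\equiv 0$ on each hole $\Omega_i$, so $\varepsilon(\rho)|_{\Omega_i}$ is \emph{itself} divergence-free on a simply connected domain and hence equals $\airy r_i$ for some $r_i\in H^3(\Omega_i)$. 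Stein-extending $r_i$ and cutting off near $\partial\Omega_i$ gives an airy correction for each inner boundary; the outer boundary $\partial\Omega_0$ is handled by \cref{lem:airy-correction-omega} applied on the simply connected $\Omega_0$. The circulation obstruction you identify never arises, because the candidate tensor extends as a smooth divergence-free field across each hole, which forces $\oint_{\partial\Omega_m}\varepsilon(\rho)\bn\cdot r_l\,\ds=0$ automatically.

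\textbf{Issues in your outline.} In Step~1, the phrase ``scalar Bogovski\u{\i} inverse'' for the commutator $[\dive,\chi_i]\tau_i=\tau_i\nabla\chi_i$ is off: Bogovski\u{\i} inverts the scalar divergence $H^1\to L^2$ and returns a vector field, whereas here you need another \emph{symmetric-tensor} inverse of $\dive$ applied to a vector field --- the very operator you are constructing. (A clean fix is to apply the BGG inverse once on a star-shaped superset such as $B_R$ and restrict, avoiding the partition of unity entirely.) In Step~3, the existence of a divergence-free $\Psi\in H^1(\Omega;\mathbb{S})$ with prescribed hole circulations is asserted via ``a Lax--Milgram argument adapted to each annular neighbourhood'', but this is precisely the delicate part and is not substantiated; the paper's route shows it is unnecessary.
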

\begin{proof}
    Let $u \in \ltwomodrm(\Omega; \mathbb{V})$. Let $R > 0$ be such that $\Omega \subset B_R$, and extend $u$ to be zero on $B_R \setminus \Omega$. By the Lax--Milgram lemma, there exists $\rho \in H_0^1(B_R; \mathbb{V})$ satisfying
 \begin{equation*}
        (\varepsilon(\rho), \varepsilon(\psi))_{B_R} = -(u, \psi)_{B_R} \qquad \psi \in H_0^1(B_R; \mathbb{V}).
 \end{equation*}
    Thanks to elliptic regularity \cite[Lemma 3.2]{Necas2011}, $\rho \in H^2(B_R; \mathbb{V})$ with $\|\rho\|_{2, B_R} \leq C \|u\|_{0, \Omega}$. 
    
    We now construct boundary corrections to the function $\varepsilon(\rho)$. Since $\varepsilon(\rho) \in H^1(\Omega_0; \mathbb{S})$ and $\dive \varepsilon(\rho) = u \in \ltwomodrm(\Omega_0; \mathbb{V})$, there exists $r_0 \in H^3(\Omega_0)$ satisfying
 \begin{equation*}
        (\airy r_0)\bn = \varepsilon(\rho) \bn \quad \text{on } \partial \Omega_0 \quad \text{and} \quad \|r_0\|_{3, \Omega_0} \leq C \| \varepsilon(\rho)\|_{1, \Omega_0} \leq C \|u\|_{0, \Omega_0}
 \end{equation*}
    by~\cref{lem:airy-correction-omega}. Let $\eta_0 \in C^{\infty}(\Omega_0)$ be any smooth function satisfying
 \begin{equation*}
        \eta_0 \equiv 1 \quad  \text{on } \partial \Omega_0 \quad \text{and} \quad \eta_0 \equiv 0 \quad \text{on } \Omega_i, \quad 1 \leq i \leq M.
 \end{equation*}
    Then the function $q_0 := \eta_0 r_0$ satisfies $q_0 \in H^3(\Omega)$ with
 \begin{equation*}
        (\airy q_0)\bn =  \varepsilon(\rho) \delta_{0j} \bn \quad \text{on } \partial \Omega_j, \quad 0 \leq j \leq M, \quad \text{and} \quad \|q_0\|_{3, \Omega_0} \leq C \|u\|_{0, \Omega}.
 \end{equation*}
    
     Moreover, for each $1 \leq i \leq M$, $\dive \varepsilon(\rho) \equiv 0$ on $\Omega_i$, and so the exactness of~\cref{eq:exact-sequence-continuous-disp} means that there exists $r_i \in H^2(\Omega_i)$ such that $\airy r_i = \varepsilon(\rho)|_{\Omega_i}$. Since $\varepsilon(\rho) \in H^1(\Omega_i; \mathbb{S})$, $r_i \in H^3(\Omega_i)$ with $\|\airy r_i\|_{1, \Omega_i} = \| \varepsilon(\rho) \|_{1, \Omega_i}$. Additionally, $r_i$ is unique up to affine functions, and thus we can choose $r_i$ such that $(r_i, \ell) = 0$ for all $\ell \in \mathcal{P}_1(\Omega)$ so that
 \begin{equation*}
        \|r_i\|_{3, \Omega_i} \leq C \|\airy r_i\|_{1, \Omega_i} = C \| \varepsilon(\rho) \|_{1, \Omega_i} \leq C \|u\|_{0,\Omega}.
 \end{equation*}
    Let $\tilde{r}_i$ denote the Stein extension~\cite{Stein1970} of $r_i$ to all of $\mathbb{R}^2$ which satisfies $\|\tilde{r}_i\|_{3, \mathbb{R}^2} \leq C \|r_i\|_{3, \Omega_i}$, and let $\eta_i \in C^{\infty}_c(\Omega_0)$ be any smooth function satisfying $\eta_i \equiv \delta_{ij}$ on $\Omega_j$, $1 \leq j \leq M$.
    Then the function $q_i := \eta_i \tilde{r}_i$ satisfies  $q_i \in H^3(\Omega)$,
 \begin{equation*}
        (\airy q_i)\bn = \varepsilon(\rho) \delta_{ij} \bn \quad \text{on } \partial \Omega_j, \quad 0 \leq j \leq M, \quad \text{and} \quad \|q_i\|_{3, \Omega} \leq C \|u\|_{0, \Omega}.
 \end{equation*}
    
    \noindent By construction, the tensor $\sigma := \varepsilon(\rho) - \airy \sum_{i=0}^{M} q_i$ then satisfies~\cref{eq:h1-inversion-div} and $\sigma \in \hdivz{\Omega}{\mathbb{S}}$, which completes the proof.    
\end{proof}

 \begin{remark}\label{rem:alt-proof-h1-inversion}
    The results in~\Cref{lem:h1-inversion-div} can be derived from the BGG machinery~\cite{Arnold2021, Cap2023}. In fact, the argument in~\cite{Arnold2021} implies that $\div: H^1(\Omega; \mathbb{S}) \cap \hdivz{\Omega}{\mathbb{S}} \to \ran(\div)$ is a bounded linear operator between Hilbert spaces and thus has bounded inverse. It is straightforward to see that $\ltwomodrm(\Omega; \mathbb{V})$ is the range of $\div$. Here we included a more explicit and constructive proof to show the role of the boundary conditions.
 \end{remark}

 
\subsection{Proof of~\cref{thm:h1-inversion-div-bc}}\label{sec:proof-of-h1-inversion-div-bc}

The case $|\Gamma_D| = |\Gamma|$ follows from~\cref{lem:h1-inversion-div}, so assume that $|\Gamma_D| < |\Gamma|$ and let $u \in L^2(\Omega; \mathbb{V})$ be given. For $1 \leq i \leq 3$, let $\Psi_i \in H^1(\Omega; \mathbb{S}) \cap \hdivgamma{\Omega}{\mathbb{S}}$ be given by~\cref{lem:h1-rigid-interp} with $\omega_{j,l} = \delta_{j0} \delta_{li}$  and define
$ v:= u - \sum_{i=1}^{3}  (u, r_i)_{\Omega} \dive \Psi_i.$
Then, there holds
 \begin{equation*}
    (v, r_i)_{\Omega} = (u, r_i)_{\Omega} - \sum_{j=1}^{3}  (u, r_j)_{\Omega} (\dive \Psi_j, r_i)_{\Omega} = (u, r_i)_{\Omega} - \sum_{j=1}^{3}  (u, r_j)_{\Omega} \delta_{ji} = 0,
 \end{equation*}
where we used the following identity for $\tau \in \hdiv{\Omega}{\mathbb{S}}$:
 \begin{equation*}
    \int_{\Omega} r \cdot \dive \tau \ \dx 
    = \int_{\Omega} \dive(r \cdot \tau) \ \dx = \int_{\Gamma} (r \cdot \tau) \cdot \bn \ \ds = \int_{\Gamma} (\tau \bn) \cdot r \ \ds \qquad \forall~r \in \rigidbodies. 
 \end{equation*}
Consequently, $v \in \ltwomodrm(\Omega; \mathbb{V})$, and thanks to~\cref{lem:h1-inversion-div}, there exists $\tau \in H^1(\Omega; \mathbb{S}) \cap \hdivz{\Omega}{\mathbb{S}}$ satisfying $\dive \tau = v$ and $\|\tau\|_{1, \Omega} \leq C \|v\|_{0, \Omega} \leq C \|u\|_{0, \Omega}$. The function $\sigma = \tau - \sum_{i=1}^{3}  (u, r_i)_{\Omega} \Psi_i$
then satisfies~\cref{eq:h1-inversion-div-bc}. 
\hfill \proofbox

 
\subsection{Inverting divergence with moments}

The last step combines the partial trace result~\cref{lem:h1-rigid-interp} with~\cref{thm:h1-inversion-div-bc} to invert the divergence operator with mixed boundary conditions and with specified moments on $\Gamma_N$.
 \begin{corollary}\label{cor:h1-inversion-div-bc-avg}
    For every $u \in L^2_{\Gamma}(\Omega; \mathbb{V})$ and $\omega \in \mathbb{R}^{J_N \times 3}$ satisfying~\cref{eq:omega-sum-conditions}, there exists $\sigma \in H^1(\Omega; \mathbb{S}) \cap \hdivgamma{\Omega}{\mathbb{S}}$ such that
 \begin{subequations}\label{eq:h1-inversion-div-bc-avg}
        \begin{alignat}{2}
            \dive \sigma &= u, \qquad & &\\
            \int_{\Gamma_{N, j}} (\sigma \bn) \cdot r_l &= \omega_{j, l}, \qquad & &1 \leq j \leq J_N, \ 1 \leq l \leq 3, \\
            \|\sigma\|_{1, \Omega} &\leq C \left( \|u\|_{0, \Omega} + |\omega| \right), \qquad & &
        \end{alignat}
 \end{subequations}
    where $C$ is independent of $u$ and $\omega$.
 \end{corollary}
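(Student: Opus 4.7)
The plan is to decouple the two pieces of data --- the prescribed divergence $u$ and the prescribed boundary moments $\omega$ --- by first lifting $\omega$ to an $H^1$ tensor and then inverting the divergence on the residual. If $|\Gamma_N| = 0$, then $J_N = 0$, $\omega$ is empty, and the statement collapses immediately to~\Cref{thm:h1-inversion-div-bc}, so I focus on the case $|\Gamma_N| > 0$.

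I would carry out three steps in order. First, apply~\Cref{lem:h1-rigid-interp} to obtain $\Psi \in H^1(\Omega; \mathbb{S}) \cap \hdivgamma{\Omega}{\mathbb{S}}$ realizing $\int_{\Gamma_{N, j}}(\Psi \bn) \cdot r_l\,\ds = \omega_{j, l}$ for all $j, l$, with $\|\Psi\|_{1, \Omega} \le C |\omega|$, and set $v := u - \dive \Psi$. Second, verify that $v \in \ltwomodrm(\Omega; \mathbb{V})$: since $\Psi$ is symmetric and $\varepsilon(r_l) = 0$, integration by parts together with $\Psi \bn|_{\Gamma_D} = 0$ yields
\begin{equation*}
(\dive \Psi, r_l)_\Omega \;=\; \int_{\Gamma_N} (\Psi \bn) \cdot r_l\,\ds \;=\; \sum_{j = 1}^{J_N} \omega_{j, l} \;=\; (u, r_l)_\Omega
\end{equation*}
by the compatibility hypothesis~\cref{eq:omega-sum-conditions}, hence $(v, r_l)_\Omega = 0$ for $l = 1, 2, 3$. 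Third, invoke~\Cref{lem:h1-inversion-div} on $v$ to produce $\tau \in H^1(\Omega; \mathbb{S}) \cap \hdivz{\Omega}{\mathbb{S}}$ with $\dive \tau = v$ and $\|\tau\|_{1, \Omega} \le C \|v\|_{0, \Omega} \le C (\|u\|_{0, \Omega} + |\omega|)$, and then set $\sigma := \tau + \Psi$.

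I expect $\sigma$ to satisfy the three conditions in~\cref{eq:h1-inversion-div-bc-avg} essentially by construction: $\dive \sigma = v + \dive \Psi = u$; because $\tau \bn$ vanishes on the \emph{entirety} of $\Gamma$ (not merely on $\Gamma_D$), the moments of $\sigma \bn$ on each $\Gamma_{N, j}$ reduce to those of $\Psi \bn$ and therefore match $\omega_{j, l}$, while $\sigma \bn = 0$ on $\Gamma_D$ since both summands vanish there; and the stability bound follows from the triangle inequality. The main obstacle --- already dispatched by~\Cref{lem:h1-inversion-div} --- is that the auxiliary divergence inversion demands the global rigid-motion orthogonality of its right-hand side, and the compatibility hypothesis~\cref{eq:omega-sum-conditions} is exactly what guarantees this orthogonality. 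That hypothesis is in fact necessary, as applying the divergence theorem to any candidate $\sigma$ (together with $\sigma \bn|_{\Gamma_D} = 0$ and $\varepsilon(r_l) = 0$) shows.
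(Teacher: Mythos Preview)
Your argument is correct and uses the same two ingredients as the paper (\Cref{lem:h1-rigid-interp} and \Cref{lem:h1-inversion-div}), just in the reverse order: you lift the moments first and then invert the divergence on the residual, whereas the paper first inverts the divergence via \Cref{thm:h1-inversion-div-bc}, then lifts the \emph{residual} moments with \Cref{lem:h1-rigid-interp}, and finally cancels the spurious divergence of that lift with a third tensor from \Cref{lem:h1-inversion-div}. Your ordering is slightly cleaner --- it produces $\sigma$ as a sum of two pieces rather than three and bypasses \Cref{thm:h1-inversion-div-bc} entirely --- but the underlying mechanism (use the compatibility condition~\cref{eq:omega-sum-conditions} to land in $\ltwomodrm(\Omega;\mathbb{V})$ so that \Cref{lem:h1-inversion-div} applies, and exploit that the resulting $\tau$ has vanishing normal trace on all of $\Gamma$, not just $\Gamma_D$) is identical.
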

\begin{proof}
    Let $u \in L^2_{\Gamma}(\Omega; \mathbb{V})$ and $\omega \in \mathbb{R}^{J_N \times 3}$ be given. By~\cref{thm:h1-inversion-div-bc}, there exists $\hat{\sigma} \in H^1(\Omega; \mathbb{S}) \cap \hdivgamma{\Omega}{\mathbb{S}}$ satisfying $\dive \hat{\sigma} = u$ and $\|\hat{\sigma}\|_{1, \Omega} \leq C \|u\|_{0, \Omega}$. Thanks to~\cref{lem:h1-rigid-interp}, there exists $\Psi \in H^1(\Omega; \mathbb{S}) \cap \hdivgamma{\Omega}{\mathbb{S}}$ such that
 \begin{align*}
        \int_{\Gamma_{N, j}} (\Psi \bn) \cdot r_l \ \ds &= \omega_{j, l} - \int_{\Gamma_{N, j}} (\hat{\sigma} \bn) \cdot r_l \ \ds, \qquad 1 \leq j \leq J_N, \\
        \|\Psi\|_{1, \Omega} &\leq C \left( |\omega| + \|\hat{\sigma}\|_{1, \Omega} \right). 
 \end{align*}
    Moreover, condition~\cref{eq:omega-sum-conditions} gives $(r_l, \dive \Psi)_{\Omega} = \sum_{j=1}^{J_N} \omega_{j, l} - (r_l, \dive \hat{\sigma})_{\Omega} = 0$.
    Consequently, there exists $\tau \in H^1(\Omega; \mathbb{S}) \cap \hdivz{\Omega}{\mathbb{S}}$ such that $\dive \tau = \dive \Psi$ and $\|\tau\|_{1, \Omega} \leq C \|\dive \Psi\|_{0, \Omega}$. The tensor $\sigma = \hat{\sigma} + \Psi - \tau$ then satisfies~\cref{eq:h1-inversion-div-bc-avg}.    
\end{proof}

 
\subsection{Proof of~\cref{lem:exact-sequence-continuous-disp}}\label{sec:proof-of-exact-sequence-continuous-disp}

The proof is similar to the discrete level, so we outline the proof. We first choose $\omega\in \mathbb{R}^{J_{N}\times 3}$ satisfying \cref{eq:proof:choosing-omegail} with $\tau \equiv 0$ and  $|\omega| \leq C |\kappa|$.        \Cref{cor:h1-inversion-div-bc-avg} shows that  there exists $\sigma \in H_{\Gamma}(\div, \Omega; \mathbb{S})$ such that $\dive \sigma \equiv 0$,
 \begin{equation*}
        \langle \sigma \bn, r_l \rangle_{\partial \Omega_m} = \sum_{ \substack{i=1 \\ \Gamma_{N, i} \subset \partial \Omega_m } }^{J_N} \omega_{i, l} = \kappa_{m, l} 
         \qquad \forall~m \in \mathfrak{I}^*, \ 1 \leq l \leq 3,
 \end{equation*}
    and $\|\sigma\|_{\dive, \Omega} \leq C |\omega| \leq C |\kappa|$. Then one can further decompose $\sigma$ into the image of $\airy$ and the harmonic part: $\sigma = \airy s + \phi$, where $\phi := \sigma - \airy s$ and
 \begin{equation*}
        s \in H^2_{\Gamma}(\Omega) : \qquad  (\airy s, \airy q) = (\sigma, \airy q) \qquad \forall~q \in H^2_{\Gamma}(\Omega).
 \end{equation*}
    The traces of $H^{2}_{\Gamma}(\Omega)$ ensure that $\langle (\airy s) \bn, r_l \rangle_{\partial \Omega_m} = 0$ for all $m \in  \mathfrak{I}^*$ and $1 \leq l \leq 3$, and so
    the harmonic part $\phi$ satisfies 
 \begin{subequations}\label{eq:harmonics}
        \begin{align}
            (\phi, \airy q) &= 0 \qquad & &\forall~q\in H^{2}_{\Gamma}(\Omega), \\
            (\dive \phi, u) &= 0 \qquad & &\forall~u\in L^{2}_{\Gamma}(\Omega; \mathbb{V}), \\
            \langle \phi \bn, r_l \rangle_{\partial \Omega_m} &= \kappa_{m, l} \qquad & &\forall~m \in \mathfrak{I}^*, \ 1 \leq l \leq 3,
        \end{align}
 \end{subequations}
    This shows that there exist functions $\phi\in H_{\Gamma}(\div, \Omega; \mathbb{S})$ satisfying~\cref{eq:harmonics}. Moreover, similar to the proof of~\cref{lem:projection-matching}, one can show that given $\kappa_{m, l} $, such $\phi$ is unique. This implies that the cohomological classes can be characterized by the value of $\langle \sigma \bn, r_l \rangle_{\partial \Omega_m}$, and therefore the dimension of the cohomology is $3|\mathfrak{I}^*|$.
\hfill\proofbox


\bibliographystyle{siamplain}
\bibliography{bib}

\begin{thebibliography}{10}

\bibitem{Argyris1968}
{\sc J.~H. Argyris, I.~Fried, and D.~W. Scharpf}, {\em The {TUBA} family of
  plate elements for the matrix displacement method}, The Aeronautical Journal,
  72 (1968), pp.~701--709, \url{https://doi.org/10.1017/S000192400008489X}.

\bibitem{Arnold2010}
{\sc D.~Arnold, R.~Falk, and R.~Winther}, {\em Finite element exterior
  calculus: from {H}odge theory to numerical stability}, Bulletin of the
  American Mathematical Society, 47 (2010), pp.~281--354,
  \url{https://doi.org/10.1090/S0273-0979-10-01278-4}.

\bibitem{Arnold2018}
{\sc D.~N. Arnold}, {\em Finite element exterior calculus}, no.~93 in CBMS-NSF
  Regional Conference Series in Applied Mathematics, SIAM, Philadelphia, 2018,
  \url{https://doi.org/10.1137/1.9781611975543}.

\bibitem{Arnold2007}
{\sc D.~N. Arnold, R.~Falk, and R.~Winther}, {\em Mixed finite element methods
  for linear elasticity with weakly imposed symmetry}, Mathematics of
  Computation, 76 (2007), pp.~1699--1723,
  \url{https://doi.org/10.1090/S0025-5718-07-01998-9}.

\bibitem{Arnold2000}
{\sc D.~N. Arnold, R.~S. Falk, and R.~Winther}, {\em Multigrid in
  ${H}(\mathrm{div})$ and ${H}(\mathrm{curl})$}, Numerische Mathematik, 85
  (2000), pp.~197--217, \url{https://doi.org/10.1007/PL00005386}.

\bibitem{Arnold2021}
{\sc D.~N. Arnold and K.~Hu}, {\em Complexes from complexes}, Foundations of
  Computational Mathematics,  (2021), pp.~1--36,
  \url{https://doi.org/10.1007/s10208-021-09498-9}.

\bibitem{Arnold1988}
{\sc D.~N. Arnold, L.~R. Scott, and M.~Vogelius}, {\em Regular inversion of the
  divergence operator with {D}irichlet boundary conditions on a polygon},
  Annali della Scuola Normale Superiore di Pisa-Classe di Scienze, 15 (1988),
  pp.~169--192.

\bibitem{Arnold2002}
{\sc D.~N. Arnold and R.~Winther}, {\em Mixed finite elements for elasticity},
  Numerische Mathematik, 92 (2002), pp.~401--419,
  \url{https://doi.org/10.1007/s002110100348}.

\bibitem{Aznaran2022}
{\sc F.~R.~A. Aznaran, P.~E. Farrell, and R.~C. Kirby}, {\em Transformations
  for {P}iola-mapped elements}, SMAI Journal of Computational Mathematics, 8
  (2022), pp.~399--437, \url{https://doi.org/10.5802/smai-jcm.91}.

\bibitem{Aznaran2024}
{\sc F.~R.~A. Aznaran, P.~E. Farrell, C.~W. Monroe, and A.~J. Van-Brunt}, {\em
  Finite element methods for multicomponent convection-diffusion}, IMA Journal
  of Numerical Analysis,  (2024), pp.~1--35,
  \url{https://doi.org/10.1093/imanum/drae001}.

\bibitem{Babuska1994}
{\sc I.~Babu{\v{s}}ka and M.~Suri}, {\em The {$p$} and {$h$-$p$} versions of
  the finite element method, basic principles and properties}, SIAM Review, 36
  (1994), pp.~578--632, \url{https://doi.org/10.1137/10361}.

\bibitem{Cai2004}
{\sc Z.~Cai and G.~Starke}, {\em Least-squares methods for linear elasticity},
  SIAM Journal on Numerical Analysis, 42 (2004), pp.~826--842,
  \url{https://doi.org/10.1137/S0036142902418357}.

\bibitem{Cap2023}
{\sc A.~{\v{C}}ap and K.~Hu}, {\em Bounded {P}oincar{\'e} operators for twisted
  and {BGG} complexes}, Journal de Math{\'e}matiques Pures et Appliqu{\'e}es,
  179 (2023), pp.~253--276, \url{https://doi.org/10.1016/j.matpur.2023.09.008}.

\bibitem{Carstensen2016a}
{\sc C.~Carstensen and J.~Gedicke}, {\em Robust residual-based a posteriori
  {A}rnold--{W}inther mixed finite element analysis in elasticity}, Computer
  Methods in Applied Mechanics and Engineering, 300 (2016), pp.~245--264,
  \url{https://doi.org/10.1016/j.cma.2015.10.001}.

\bibitem{Chaumont-Frelet2023}
{\sc T.~Chaumont-Frelet and M.~Vohral{\'i}k}, {\em A stable local commuting
  projector and optimal $hp$ approximation estimates in
  {$\boldmath{H}(\mathrm{curl})$}},  (2023),
  \url{https://inria.hal.science/hal-03817302} .

\bibitem{Cherubini2020}
{\sc C.~Cherubini, S.~Filippi, A.~Gizzi, A.~Loppini, and R.~Ruiz-Baier}, {\em
  Modelling thermo-electro-mechanical effects in orthotropic cardiac tissue},
  Communications in Computational Physics, 27 (2020), pp.~87--115,
  \url{https://doi.org/10.4208/cicp.OA-2018-0253}.

\bibitem{Christiansen2024}
{\sc S.~H. Christiansen, J.~Gopalakrishnan, J.~Guzm{\'a}n, and K.~Hu}, {\em A
  discrete elasticity complex on three-dimensional {A}lfeld splits}, Numerische
  Mathematik, 156 (2024), pp.~159--204,
  \url{https://doi.org/10.1007/s00211-023-01381-9}.

\bibitem{Christiansen2018}
{\sc S.~H. Christiansen, J.~Hu, and K.~Hu}, {\em Nodal finite element de {R}ham
  complexes}, Numerische Mathematik, 139 (2018), pp.~411--446,
  \url{https://doi.org/10.1007/s00211-017-0939-x}.

\bibitem{Costabel2010}
{\sc M.~Costabel and A.~McIntosh}, {\em {On Bogovski{\u\i} and regularized
  Poincar{\'e} integral operators for de Rham complexes on Lipschitz domains}},
  Mathematische Zeitschrift, 265 (2010), pp.~297--320,
  \url{https://doi.org/10.1007/s00209-009-0517-8}.

\bibitem{Elman2014}
{\sc H.~C. Elman, D.~J. Silvester, and A.~J. Wathen}, {\em Finite elements and
  fast iterative solvers: with applications in incompressible fluid dynamics},
  Numerical Mathematics and Scientific Computation, Oxford University Press,
  2\textsuperscript{nd}~ed., 2014,
  \url{https://doi.org/10.1093/acprof:oso/9780199678792.001.0001}.

\bibitem{Ern2022}
{\sc A.~Ern, T.~Gudi, I.~Smears, and M.~Vohral{\'\i}k}, {\em Equivalence of
  local-and global-best approximations, a simple stable local commuting
  projector, and optimal $hp$ approximation estimates in
  {$\boldmath{H}(\mathrm{div})$}}, IMA Journal of Numerical Analysis, 42
  (2022), pp.~1023--1049, \url{https://doi.org/10.1093/imanum/draa103}.

\bibitem{Geymonat2000}
{\sc G.~Geymonat and F.~Krasucki}, {\em On the existence of the {A}iry function
  in {L}ipschitz domains. {A}pplication to the traces of {$H^2$}}, Comptes
  Rendus de l'Acad{\'e}mie des Sciences-Series I-Mathematics, 330 (2000),
  pp.~355--360, \url{https://doi.org/10.1016/S0764-4442(00)00196-8}.

\bibitem{Girault1986}
{\sc V.~Girault and P.~Raviart}, {\em Finite element methods for
  {N}avier--{S}tokes equations: {T}heory and algorithms}, Spring-Verlag,
  Berlin, 1986, \url{https://doi.org/10.1007/978-3-642-61623-5}.

\bibitem{Gopalakrishnan2012}
{\sc J.~Gopalakrishnan and J.~Guzm{\'a}n}, {\em A second elasticity element
  using the matrix bubble}, IMA Journal of Numerical Analysis, 32 (2012),
  pp.~352--372, \url{https://doi.org/10.1093/imanum/drq047}.

\bibitem{Gopalakrishnan2024}
{\sc J.~Gopalakrishnan, J.~Guzmi{\'a}n, and J.~J. Lee}, {\em The
  {J}ohnson--{M}ercier elasticity element in any dimensions}, arXiv preprint
  arXiv:2403.13189,  (2024).

\bibitem{Ham2023}
{\sc D.~A. Ham, P.~H.~J. Kelly, L.~Mitchell, C.~J. Cotter, R.~C. Kirby,
  K.~Sagiyama, N.~Bouziani, S.~Vorderwuelbecke, T.~J. Gregory, J.~Betteridge,
  D.~R. Shapero, R.~W. Nixon-Hill, C.~J. Ward, P.~E. Farrell, P.~D.
  Brubeck~Martinez, I.~Marsden, T.~H. Gibson, M.~Homolya, T.~Sun, A.~T.~T.
  McRae, F.~Luporini, A.~Gregory, M.~Lange, S.~W. Funke, F.~Rathgeber, G.~T.
  Bercea, and G.~R. Markall}, {\em Firedrake user manual}, Imperial College
  London and University of Oxford and Baylor University and University of
  Washington, 1\textsuperscript{st}~ed., May 2023,
  \url{https://doi.org/10.25561/104839}.

\bibitem{He2019}
{\sc J.~He, K.~Hu, and J.~Xu}, {\em Generalized {G}affney inequality and
  discrete compactness for discrete differential forms}, Numerische Mathematik,
  143 (2019), pp.~781--795, \url{https://doi.org/10.1007/s00211-019-01076-0}.

\bibitem{Hiptmair1998}
{\sc R.~Hiptmair}, {\em Multigrid method for {M}axwell's equations}, SIAM
  Journal on Numerical Analysis, 36 (1998), pp.~204--225,
  \url{https://doi.org/10.1137/S003614299732620}.

\bibitem{Howell2009}
{\sc P.~Howell, G.~Kozyreff, and J.~Ockendon}, {\em Applied solid mechanics},
  vol.~43, Cambridge University Press, Cambridge, 2009,
  \url{https://doi.org/10.1017/CBO9780511611605}.

\bibitem{Hu2014}
{\sc J.~Hu and S.~Zhang}, {\em A family of conforming mixed finite elements for
  linear elasticity on triangular grids}, arXiv preprint arXiv:1406.7457,
  (2014).

\bibitem{Johnson1978}
{\sc C.~Johnson and B.~Mercier}, {\em Some equilibrium finite element methods
  for two-dimensional elasticity problems}, Numerische Mathematik, 30 (1978),
  pp.~103--116, \url{https://doi.org/10.1007/BF01403910}.

\bibitem{Kolev2021}
{\sc T.~Kolev, P.~Fischer, M.~Min, J.~Dongarra, J.~Brown, V.~Dobrev,
  T.~Warburton, S.~Tomov, M.~S. Shephard, A.~Abdelfattah, et~al.}, {\em
  Efficient exascale discretizations: {H}igh-order finite element methods}, The
  International Journal of High Performance Computing Applications, 35 (2021),
  pp.~527--552, \url{https://doi.org/10.1177/109434202110208}.

\bibitem{Lee2007}
{\sc Y.-J. Lee, J.~Wu, J.~Xu, and L.~Zikatanov}, {\em Robust subspace
  correction methods for nearly singular systems}, Mathematical Models and
  Methods in Applied Sciiences, 17 (2007), pp.~1937--1963,
  \url{https://doi.org/10.1142/S0218202507002522}.

\bibitem{Malek2018}
{\sc J.~M{\'a}lek, V.~Pr{\r{u}\v{s}}a, T.~Sk{\v{r}}ivan, and E.~S{\"u}li}, {\em
  Thermodynamics of viscoelastic rate-type fluids with stress diffusion},
  Physics of Fluids, 30 (2018), p.~023101,
  \url{https://doi.org/10.1063/1.5018172}.

\bibitem{Necas2011}
{\sc J.~Ne\v{c}as}, {\em Direct methods in the theory of elliptic equations},
  Springer Science \& Business Media, 2011,
  \url{https://doi.org/10.1007/978-3-642-10455-8}.

\bibitem{Parker2023}
{\sc C.~Parker and E.~S\"{u}li}, {\em Stable liftings of polynomial traces on
  triangles}, SIAM Journal on Numerical Analysis, 62 (2023),
  \url{https://doi.org/10.1137/23M156494}.

\bibitem{Schoeberl2007}
{\sc J.~Sch{\"o}berl and A.~S. Sinwel}, {\em Tangential-{D}isplacement and
  {N}ormal-{N}ormal-{S}tress continuous mixed finite elements for elasticity},
  tech. report, Linz, Oct. 2007.
\newblock
  \url{https://ricamwww.ricam.oeaw.ac.at/files/reports/07/rep07-10.pdf}.

\bibitem{Scott1990}
{\sc L.~R. Scott and S.~Zhang}, {\em Finite element interpolation of nonsmooth
  functions satisfying boundary conditions}, Mathematics of Computation, 54
  (1990), pp.~483--493,
  \url{https://doi.org/10.1090/S0025-5718-1990-1011446-7}.

\bibitem{Sky2023}
{\sc A.~Sky, M.~Neunteufel, J.~S. Hale, and A.~Zilian}, {\em A
  {R}eissner--{M}indlin plate formulation using symmetric {H}u-{Z}hang elements
  via polytopal transformations}, Computer Methods in Applied Mechanics and
  Engineering, 416 (2023), p.~116291,
  \url{https://doi.org/https://doi.org/10.1016/j.cma.2023.116291}.

\bibitem{Stein1970}
{\sc E.~M. Stein}, {\em Singular integrals and differentiability properties of
  functions}, Princeton University Press, Princeton, NJ, 1970.

\bibitem{VonMises1913}
{\sc R.~Von~Mises}, {\em Mechanik der festen k{\"o}rper im
  plastisch-deformablen zustand}, Nachrichten von der Gesellschaft der
  Wissenschaften zu G{\"o}ttingen, Mathematisch-Physikalische Klasse, 1913
  (1913), pp.~582--592.

\end{thebibliography}
\end{document}